\providecommand{\href}[2]{#2}
\providecommand*{\backref}{}
\providecommand*{\backrefalt}{}
\renewcommand*{\backref}[1]{}
\renewcommand*{\backrefalt}[4]{%
	\ifcase #1 %
	\or
	  Cited page~#2.
	\else
	  Cited pages~#2.
	\fi
}
\newcommand\MTkillspecial[1]{
  \bgroup
  \catcode`\&=9
  \let\\\relax%
  \scantokens{#1}%
  \egroup
}
\newcommand\DeclarePairedDelimiterMultiline[3]{
  \DeclarePairedDelimiter{#1}{#2}{#3}
  \reDeclarePairedDelimiterInnerWrapper{#1}{star}{
    \mathopen{##1\vphantom{\MTkillspecial{##2}}\kern-\nulldelimiterspace\right.}
    ##2
    \mathclose{\left.\kern-\nulldelimiterspace\vphantom{\MTkillspecial{##2}}##3}}
}
\def\sumprime{\sideset{}{'}\sum}
\newcommand{\restr}{|_}
\newcommand{\E}{\mathbb{E}}
\newcommand{\Pbb}{\mathbb{P}}
\newcommand{\Z}{\mathbb{Z}}
\newcommand{\N}{\mathbb{N}}
\newcommand{\R}{\mathbb{R}}
\newcommand{\dd}{\mathop{}\!\mathrm{d}}
\DeclarePairedDelimiterMultiline{\pare}{(}{)}
\DeclarePairedDelimiterMultiline{\abs}{\lvert}{\rvert}
\DeclarePairedDelimiterMultiline{\norm}{\lVert}{\rVert}
\DeclarePairedDelimiterMultiline{\normLip}{\lVert}{\rVert_{\mathrm{Lip}}}
\DeclarePairedDelimiterMultiline{\acc}{\{}{\}}
\newcommand{\st}{\::\:}
\newcommand{\uHan}{\underline{D}_2^{\mathrm{an\vphantom{q}}}}
\newcommand{\oHan}{\overline{D}_2^{\mathrm{an\vphantom{q}}}}
\newcommand{\Han}{D_2^{\mathrm{an\vphantom{q}}}}
\newcommand{\uHqu}{\underline{D}_2^{\mathrm{qu}}}
\newcommand{\oHqu}{\overline{D}_2^{\mathrm{qu}}}
\newcommand{\Hqu}{D_2^{\mathrm{qu}}}
\DeclareMathOperator{\diam}{diam}
\DeclareMathOperator{\var}{var}
\DeclareMathOperator{\cLip}{Lip}
\newcommand{\coloneqq}{\mathrel{\mathop:}=}
\renewcommand{\epsilon}{\varepsilon}
\renewcommand{\phi}{\varphi}
\renewcommand{\leq}{\leqslant}
\renewcommand{\geq}{\geqslant}
\newtheorem{thm}{Theorem}[section]
\newtheorem{prop}[thm]{Proposition}
\newtheorem{definition}[thm]{Definition}
\newtheorem{lem}[thm]{Lemma}
\newtheorem*{prop*}{Proposition}
\theoremstyle{definition}
\newtheorem{rmk}[thm]{Remark}
\numberwithin{equation}{section}
\numberwithin{figure}{section}
\begin{document}

\title{Minimal distance between random orbits}

\author{S\'ebastien Gou\"ezel}

\address{S\'ebastien Gou\"ezel, IRMAR, CNRS UMR 6625,
Universit\'e de Rennes 1, 35042 Rennes, France}
\email{sebastien.gouezel@univ-rennes1.fr}

\author{J\'er\^ome Rousseau}
\address{J\'er\^ome Rousseau, CREC,Acad\'emie Militaire de St Cyr Co\"etquidan, 56381 GUER Cedex, France}
\address{Departamento de Matem\'atica, Universidade Federal da Bahia,
Av. Ademar de Barros s/n, 40170-110 Salvador, Brazil}
\address{IRMAR, CNRS UMR 6625,
Universit\'e de Rennes 1, 35042 Rennes, France}
\email{jerome.rousseau@ufba.br}

\author{Manuel Stadlbauer}
\address{Manuel Stadlbauer, Instituto de Matem\'atica,
Universidade Federal do Rio de Janeiro, Avenida Athos da Silveira Ramos  149, 21941-909 Rio de Janeiro (RJ), Brazil}
\email{manuel@im.ufrj.br}

\thanks{JR was partially supported by CNPq, by FCT projects PTDC/MAT-PUR/28177/2017 and
PTDC/MAT-PUR/4048/2021, with national funds, and by CMUP (UIDB/00144/2020),
which is funded by FCT with national (MCTES) and European structural funds
through the programs FEDER, under the partnership agreement PT2020. The third
author acknowledges financial support by Coordena\c{c}\~ao de Aperfei\c{c}oamento de
Pessoal de N\'{\i}vel Superior - Brasil (CAPES) - Finance Code 001 and by Conselho
Nacional de Desenvolvimento Cient\'{\i}fico - Brasil (CNPq) - PQ 312632/2018-5.}

\date{\today}



\begin{abstract}
We study the minimal distance between two orbit segments of length $n$, in
a random dynamical system with sufficiently good mixing properties. This
problem has already been solved in non-random dynamical system, and on
average in random dynamical systems (the so-called annealed version of the
problem): it is known that the asymptotic behavior for this question is
given by a dimension-like quantity associated to the invariant measure,
called its correlation dimension (or R\'enyi entropy). We study the analogous quenched question, and
show that the asymptotic behavior is more involved: two correlation dimensions
show up, giving rise to a non-smooth behavior of the associated asymptotic
exponent.
\end{abstract}

\maketitle

\section{Introduction}

\subsection{Main results}

This article is devoted to the study of the minimal distance between pieces
of orbits of length $n$, in a random dynamical system setting. By this, we
mean the following standard setting. We start from an invertible, probability
preserving dynamical system $(\Omega, \theta, \Pbb)$ on a compact metric
space, and consider another metric space $(X, d)$. For each $\omega \in
\Omega$, let $T_\omega$ be  a measurable map of $X$, such that the
skew-product map $S : (\omega, x)\mapsto (\theta \omega, T_\omega x)$ is
measurable and preserves a probability measure $\nu$ whose marginal on
$\Omega$ is $\Pbb$. The iterates of $S$ are given by $S^n (\omega, x) =
(\theta^n \omega, T_\omega^n x)$, where $T_\omega^n = T_{\theta^{n-1}
\omega}\circ \dotsm\circ T_\omega$ is a random composition of the
$T_{\omega}$'s, where the randomness is dictated by the driving map $\theta$.
In this setting, the measure $\nu$ can be disintegrated above $\Pbb$: there
is a family of probability measures $\mu_\omega$, depending measurably on
$\omega$, such that for any bounded function $f$ holds $\int f\dd\nu = \int
\pare*{\int f(x) \dd\mu_\omega(x)} \dd\Pbb(\omega)$. We write informally $\nu
= \Pbb \otimes \mu_\omega$. Let $\mu = \int \mu_\omega \dd\Pbb(\omega)$ be
the second marginal of $\nu$. As $\nu$ is invariant under $S$, the measures
$\mu_\omega$ also satisfy an invariance property: $(T_\omega)_* \mu_\omega =
\mu_{\theta \omega}$ for $\Pbb$-a.e.\ $\omega$.

We are interested in the minimal distance between two pieces of orbit of
length $n$. In a classical dynamical system setting, this would amount to
understanding the behavior of $\min_{i, j < n} d (T^i x, T^j y)$ for a
typical pair $(x, y)$. It has been shown in~\cite{BaLiRo} that the rate of decay
to zero of this quantity is related to a dimension-like quantity associated
to the invariant measure, called its correlation dimension (or R\'enyi entropy for symbolic dynamical systems), measuring the polynomial
decay rate of the $r$-neighborhood of the diagonal in $X \times X$ in terms
of $r$.

In the random dynamics situation, there are two possible interpretations for
this question. One may consider typical pairs $(\omega, x)$ and $(\omega',
y)$ and try to minimize $d(T_\omega^i x, T_{\omega'}^j y)$. This is the
annealed question, where randomness is taken over the whole product space
$\Omega \times X$. It has already been studied in~\cite{colaro}, and the outcome
is comparable to the situation of classical dynamical systems (the relevant
quantity being the correlation dimension of the second marginal $\mu$ of $\nu$). One
may also consider a typical $\omega$, and then for this fixed $\omega$ pick a
typical pair $(x, y)$ for $\mu_\omega$ and try to minimize $d(T_\omega^i x,
T_\omega^j y)$. This is the quenched variant of our main question, to which
this article is devoted.

In many questions about random systems, the outcome in a quenched situation
is similar to the outcome of the annealed situation, but harder to prove.
This is not the case here: we observe a behavior which is genuinely different
from the annealed case, with a phase transition: there are two competing
phenomena to decide the decay rate of the minimal distance between orbits,
one similar to the annealed situation and one that is specific to the
quenched situation, and each of them can be prevalent in some situations.

We denote
\begin{equation*}
  \uHan = \liminf_{r\to 0} \frac{\log \int \mu(B(x,r)) \dd\mu(x)}{\log r}, \quad
  \oHan = \limsup_{r\to 0} \frac{\log \int \mu(B(x,r)) \dd\mu(x)}{\log r}
\end{equation*}
for the lower and upper correlation dimensions of the measure $\mu$. The $2$
in the notation comes from the fact that this in an $L^2$-like expression,
which is easier to see in a symbolic setting as in Remark~\ref{rmk:lcs}.
These are annealed quantities, referring to the averaged measure $\mu = \int
\mu_\omega \dd\Pbb(\omega)$, hence the superscript $\mathrm{an}$. In the
quenched version, one should rather compute the correlation dimension of each
measure $\mu_\omega$ and then average with respect to $\Pbb$, giving rise to
the following definitions:
\begin{align*}
  \uHqu &= \liminf_{r\to 0} \frac{\log \int \mu_\omega(B(x,r)) \dd\mu_\omega(x) \dd\Pbb(\omega)}{\log r},
  \\ \oHqu &= \limsup_{r\to 0} \frac{\log \int \mu_\omega(B(x,r)) \dd\mu_\omega(x) \dd\Pbb(\omega)}{\log r}.
\end{align*}
When the liminf and the limsup coincide, we denote the corresponding
quantities by $\Han$ and $\Hqu$.

Our main theorem shows that the decay rate of the minimal distance between
orbits, in the quenched situation, can be expressed in terms of $\Han$ and
$\Hqu$. This result requires that the geometry of the space should be nice
enough (spaces with bounded local complexity, see
Definition~\ref{def:local_complexity} below -- this is a very mild geometric
condition on the space, satisfied for instance by shift spaces and Riemannian
manifolds), that the measures $\mu_\omega$ depend in a Lipschitz way on
$\omega$ (see Definition~\ref{def:lipschitz_fibers}) and that the system
mixes quickly enough, both for the base map and the fiber maps (stretched
exponential mixing, see Definitions~\ref{def:4mixing_base}
and~\ref{def:mixing_fiber}). Finally, we also require that the map $S$ is
Lipschitz.

\begin{thm}
\label{thm:main} Let $X$ be a compact metric space with bounded local
complexity. Consider a random dynamical system $S : \Omega \times X \to
\Omega \times X$ preserving a probability measure $\nu$, for which $\Han$ and
$\Hqu$ are well defined, and for which $\omega \to \mu_\omega$ is Lipschitz.
Assume that $S$ is Lipschitz, has fiberwise stretched exponential mixing, and
that the base map has stretched exponential $4$-mixing. Then, for
$\Pbb$-almost every $\omega$, for $\mu_\omega^{\otimes 2}$-almost every $x,
y$, one has the convergence
\begin{equation}
\label{eq:main}
  \frac{-\log \min_{i, j < n} d(T_\omega^i x, T_\omega^j y)}{\log n} \to \max\acc*{\frac{2}{\Han}, \frac{1}{\Hqu}}.
\end{equation}
\end{thm}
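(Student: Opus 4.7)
The plan is to split the minimum according to whether the two time indices coincide. Set $D^{\mathrm{diag}}_n = \min_{i<n} d(T_\omega^i x, T_\omega^i y)$ and $D^{\mathrm{off}}_n = \min_{i\neq j,\, i,j<n} d(T_\omega^i x, T_\omega^j y)$, so that $\min_{i,j<n} d(T_\omega^i x, T_\omega^j y) = \min(D^{\mathrm{diag}}_n, D^{\mathrm{off}}_n)$. I would prove separately, for $\Pbb$-a.e.\ $\omega$ and $\mu_\omega^{\otimes 2}$-a.e.\ $(x,y)$, that $(-\log D^{\mathrm{diag}}_n)/\log n \to 1/\Hqu$ and $(-\log D^{\mathrm{off}}_n)/\log n \to 2/\Han$; the theorem then follows by taking the maximum. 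The heuristic is that the $n$ diagonal pairs $(T_\omega^i x, T_\omega^i y)$ are distributed under $\mu_{\theta^i\omega}^{\otimes 2}$ and see the quenched rate $r^{\Hqu}$, while the $n^2-n$ off-diagonal pairs are asymptotically decoupled by mixing and see the annealed rate $r^{\Han}$ attached to $\mu$.

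For the \emph{upper bound on the two exponents} (equivalently, the a.s.\ inequality $\min_{i,j<n} d\geq n^{-\max(2/\Han,1/\Hqu)-\epsilon}$ for all large $n$) the tool is a first-moment estimate on
\[
  N_r(\omega, x, y) = \#\{(i,j) : 0\leq i,j<n,\ d(T_\omega^i x, T_\omega^j y)\leq r\}.
\]
Using invariance $(T_\omega)_*\mu_\omega = \mu_{\theta\omega}$, the diagonal part of $\E N_r$ equals $n\int\mu_\omega(B(z,r))\dd\mu_\omega(z)\dd\Pbb(\omega)\sim n r^{\Hqu}$, and stretched-exponential $4$-mixing of $\theta$ combined with fiber mixing reduces the off-diagonal part to $\sim n^2\int\mu(B(z,r))\dd\mu(z)\sim n^2 r^{\Han}$ (plus negligible boundary terms for $|j-i|$ small, absorbed into the diagonal contribution). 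Markov and Borel--Cantelli along a geometric subsequence $n_k = \lfloor\lambda^k\rfloor$, together with monotonicity of $\min_{i,j<n} d$ in $n$ and a conditional Fubini to pass from annealed to quenched statements, then deliver the a.s.\ lower bound on $\min_{i,j<n} d$.

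For the \emph{lower bound on the two exponents} I would apply Paley--Zygmund to each of $N^{\mathrm{diag}}_r$ and $N^{\mathrm{off}}_r$. The diagonal second moment is controlled by decorrelation of the product fiber map $T_\omega\times T_\omega$. The off-diagonal case requires a four-point correlation estimate for
\[
  \int \mathbf{1}_{d(T_\omega^i x, T_\omega^j y)\leq r}\,\mathbf{1}_{d(T_\omega^{i'} x, T_\omega^{j'} y)\leq r}\,\dd\mu_\omega(x)\dd\mu_\omega(y),
\]
obtained by splitting the quadruples $(i,j,i',j')$ into well-separated and clustered configurations, handling the former via the $4$-mixing hypothesis combined with the Lipschitz dependence $\omega\mapsto\mu_\omega$ (used to lift base mixing to the compositions $T_\omega^n$) and the latter via bounded local complexity. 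This yields $\E(N^\bullet_r)^2\leq (1+o(1))(\E N^\bullet_r)^2$, so Paley--Zygmund gives $\Pbb(N^\bullet_r\geq 1)\geq c>0$; this is upgraded to an eventual statement via Borel--Cantelli along a slowly growing subsequence (requiring a quantitative concentration tail rather than just variance) and then to every $n$ by monotonicity.

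The hard part will be the quenched four-point correlation estimate. Three regimes must be handled: (a)~all four indices well separated, where one needs genuine $4$-mixing of $\theta$ lifted to $T_\omega^n$ via Lipschitz fiber dependence; (b)~configurations with two coinciding indices, reducing to ordinary three-point mixing; and (c)~clustered configurations absorbed into a combinatorial error via bounded local complexity. The stretched-exponential rate must beat the polynomial count of index quadruples so that the ``$o(1)$'' is summable along the geometric subsequence, which is precisely what permits converting Paley--Zygmund's ``$\Pbb(\cdot)\geq c$'' into the almost-sure eventual statement required to match the first-moment bound.
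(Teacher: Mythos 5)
Your overall strategy---a first-moment estimate and Markov for the upper bound, a second-moment estimate and a Paley--Zygmund/Chebyshev step for the lower bound, with Borel--Cantelli along a subsequence and monotonicity to interpolate---matches the paper's plan. The difficulty is in the decomposition. You split the pairs $(i,j)$ into the strict diagonal $i=j$ and everything else, and you claim the two separate a.s.\ convergences $(-\log D^{\mathrm{diag}}_n)/\log n \to 1/\Hqu$ and $(-\log D^{\mathrm{off}}_n)/\log n \to 2/\Han$. The second convergence is not justified, and is the genuine gap.

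The problem is the nearly-diagonal pairs with $0 < |j-i|$ small. For such a pair, $T_\omega^i x$ and $T_\omega^j y$ are independent given $\omega$, but drawn from the measures $\mu_{\theta^i\omega}$ and $\mu_{\theta^j\omega}$, which are typically still strongly correlated: the closeness probability involves $\int \mu_\omega(B(z,r))\dd\mu_{\theta^{j-i}\omega}(z)\dd\Pbb(\omega)$, and for $|j-i|$ bounded (or more generally $|j-i|$ growing slower than the base mixing scale) this quantity is \emph{not} of order $r^{\Han}$; it sits somewhere between $r^{\Hqu}$ and $r^{\Han}$, and you have no argument controlling it from below by the annealed rate. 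Since there are order $n$ such pairs for each small gap, these pairs can in principle dominate $D^{\mathrm{off}}_n$ (at least in the regime $1/\Hqu > 2/\Han$), and the $\limsup$ half of your claimed convergence for $D^{\mathrm{off}}_n$ can fail. You implicitly acknowledge this when you say the close-off-diagonal terms are ``absorbed into the diagonal contribution'' in the first-moment computation---but this is exactly why the clean convergence statement for $D^{\mathrm{off}}_n$ does not follow: the close-off-diagonal pairs belong with the diagonal, not with the rest.

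The paper handles this by using a different, coarser-by-design decomposition: a polylogarithmic threshold $\alpha(n)=(\log n)^{C_4}$ separates ``close'' pairs $|j-i|\le\alpha(n)$ (including the diagonal) from ``far'' pairs $|j-i|>\alpha(n)$, and for the lower bound it further specializes to the strict diagonal $m_n^0$ and to pairs with $i<n/3$, $j\ge 2n/3$ (so the gap is at least $n/3$, which permits the quantitative $4$-mixing estimate). The propositions establish only one-sided bounds for each piece: $\limsup(-\log m_n^{\le})/\log n \le 1/\uHqu$, $\limsup(-\log m_n^{>})/\log n \le 2/\uHan$, $\liminf(-\log m_n^0)/\log n \ge 1/\oHqu$, $\liminf(-\log m_n^{\gg})/\log n \ge 2/\oHan$; this is enough for the theorem and avoids any assertion about the behaviour of the intermediate-gap pairs. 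If you re-aim your two-piece plan at these weaker, correctly-scoped statements---in particular, bound $m_n^{\le}$ by $1/\uHqu$ rather than try to prove the off-diagonal minimum converges to $2/\Han$---the rest of your sketch (the Cauchy--Schwarz reduction, the $4$-mixing splitting into well-separated and clustered quadruples, the use of the Lipschitz dependence of $\mu_\omega$ and of bounded local complexity) does line up with the paper's argument.

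Two smaller points. First, the paper uses a polynomial subsequence $n_s=\lfloor s^{2/\epsilon}\rfloor$ rather than a geometric one; either works for the monotone pieces, but beware that the paper's $m_n^{>}$ (and your restricted off-diagonal pieces) is not monotone in $n$ because the constraint $|j-i|>\alpha(n)$ tightens as $n$ grows, so the interpolation between consecutive $n_s$ requires an auxiliary quantity $m'_{n_s}$ as in the paper. Second, for the lower bound you do not need more than the variance bound: $\Pbb(N=0)\le \var(N)/(\E N)^2$ already gives the summable tail for Borel--Cantelli once the second-moment estimate is in place; Paley--Zygmund is not an additional ingredient.
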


This theorem should be compared with the corresponding statement in the
annealed situation: under the same assumptions, for $\nu^{\otimes 2}$
almost-every pairs $(\omega, x), (\omega', y)$, one has
\begin{equation*}
  \frac{-\log \min_{i, j < n} d(T_\omega^i x, T_{\omega'}^j y)}{\log n} \to \frac{2}{\Han},
\end{equation*}
by~\cite[Theorem 4.4]{colaro}.

Theorem~\ref{thm:main} is a consequence of several statements on upper and
lower bounds, which for some of them require weaker assumptions regarding
mixing, and which can be expressed in terms of $\uHan$, $\uHqu$ and $\oHan$,
$\oHqu$ respectively, without requiring that $\Han$ and $\Hqu$ are well
defined. These more precise versions are discussed in
Paragraph~\ref{subsec:fine-grained}, after the precise meaning of our
assumptions is discussed in the next paragraph.

Let us stress that Theorem~\ref{thm:main} applies to a large class of
concrete uniformly and nonuniformly expanding random dynamical systems, see
Paragraphs~\ref{subsec:Finitely many Ruelle expanding maps}
and~\ref{subsec:Non-uniformly expanding local diffeomorphisms}. Also, one can
construct examples in which the maximum in the right hand side
of~\eqref{eq:main} is realized either by the first or the second term. See in
particular Paragraph~\ref{subsec:transition} in which we exhibit a family of
systems depending smoothly on a parameter for which there is a transition
from the first behavior to the second behavior, in a non-smooth way,
exhibiting a second-order phase transition for the minimal approximation rate
of orbits in the quenched setting (while there is no such phase transition
for the analogous annealed question).

When $\Omega$ is a point, the annealed and quenched correlation dimensions coincide,
so the maximum is always realized by $2/\Han$. This is also the case when we
are close enough to a product situation (in which case all the $\mu_\omega$
are close to $\mu$), but $1/\Hqu$ may become dominant in more distorted
situations, as testified in Paragraph~\ref{subsec:transition}.

The intuition as to which term is dominant is the following. If one considers
$i$ far away from $j$, then $\theta^i \omega$ and $\theta^j \omega$ are
essentially independent, so $T_\omega^i x$ and $T_\omega^j y$ are essentially
two independent points distributed according to $\mu$, and one should get the
same behavior as in the annealed situation. There are $n^2$ such pairs $(i,
j)$, and for each of them the probability that the points are close by is
governed by the dimension $\Han$, hence an asymptotics $2/\Han$. For $j = i$ on
the other hand, the points $T_\omega^i x$ and $T_\omega^i y$ are independent
points distributed according to the measure $\mu_{\theta^i \omega}$, so the
correlation dimension of this measure should appear in the asymptotic. Since there
are only $n$ such pairs $(i, i)$ (as opposed to $n^2$ before), we get an
asymptotics $1/\Hqu$. The precise statements in
Paragraph~\ref{subsec:fine-grained} will make this intuition precise, by
showing that the on-diagonal and off-diagonal behaviors are genuinely
different.

It is interesting to specialize Theorem~\ref{thm:main} to the case of a
deterministic dynamical system (taking $\Omega$ to be a point). Many of our
assumptions become trivial in this situation. The statement becomes the
following.
\begin{thm}
\label{thm:deterministic} Let $T:X\to X$ be a Lipschitz map on a compact
metric space with bounded local complexity, preserving a probability measure
$\mu$ with a well-defined correlation dimension $D_2(\mu)$. Assume that $T$
mixes stretched exponentially. Then, for $\mu^{\otimes 2}$-almost all $x, y$,
\begin{equation*}
  \frac{-\log \min_{i, j < n} d(T^i x, T^j y)}{\log n} \to \frac{2}{D_2(\mu)}.
\end{equation*}
\end{thm}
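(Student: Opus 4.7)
The plan is to derive Theorem~\ref{thm:deterministic} as a direct specialization of Theorem~\ref{thm:main} by collapsing the randomness to a one-point base. Concretely, I take $\Omega = \{\omega_0\}$ with $\theta$ the identity, $\Pbb = \delta_{\omega_0}$, and $T_{\omega_0} = T$. The skew product $S$ then coincides with $T$, the disintegration $\mu = \int \mu_\omega \dd\Pbb(\omega)$ is trivial and gives $\mu_{\omega_0} = \mu$, so the averaged and fiberwise measures agree.

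Under this identification, a direct comparison of the definitions yields $\uHan = \uHqu$ and $\oHan = \oHqu$, so whenever the correlation dimension $D_2(\mu)$ is well defined one has $\Han = \Hqu = D_2(\mu)$. Since $2/D_2(\mu) \geq 1/D_2(\mu)$, the right-hand side $\max\acc*{2/\Han,\, 1/\Hqu}$ of equation~\eqref{eq:main} reduces to $2/D_2(\mu)$, which is exactly the target exponent.

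Next I would verify that each hypothesis of Theorem~\ref{thm:main} is automatic here. Bounded local complexity of $X$ and the Lipschitz property of $T$ are hypotheses of Theorem~\ref{thm:deterministic}, and lift immediately to $S$. The Lipschitz dependence of $\omega \mapsto \mu_\omega$ is vacuous on a singleton. The fiberwise mixing hypothesis should reduce to the stated stretched exponential mixing of $T$, and the stretched exponential $4$-mixing of the base map $\theta = \mathrm{id}$ on a one-point space is trivial because every measurable function on $\Omega$ is constant, so all correlations on the base vanish identically. Applying Theorem~\ref{thm:main} then yields the claimed convergence for $\mu^{\otimes 2}$-a.e.\ pair $(x,y)$.

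The only mild obstacle I anticipate is confirming that the precise formulation of fiberwise stretched exponential mixing (Definition~\ref{def:mixing_fiber}) collapses exactly to the deterministic notion of stretched exponential mixing used in the statement of Theorem~\ref{thm:deterministic}, rather than to some strictly stronger variant. This is a purely bookkeeping verification once those definitions are in hand, with no real mathematical content, but it is the one point worth double-checking before declaring the reduction complete.
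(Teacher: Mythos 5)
Your proposal is exactly the paper's approach: the paper introduces Theorem~\ref{thm:deterministic} with the remark that it is the specialization of Theorem~\ref{thm:main} to the case where $\Omega$ is a single point, and your verification that all hypotheses trivialize (or collapse to the stated deterministic hypotheses) is precisely what that specialization requires. The bookkeeping you flag at the end does work out: with $\Omega$ a singleton, Definition~\ref{def:mixing_fiber} reduces verbatim to the usual stretched exponential decay of correlations for $(X, T, \mu)$.
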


This theorem is essentially proved in~\cite{BaLiRo}, although the assumptions
there are phrased in a slightly different way.

\begin{rmk}[Longest common substring]\label{rmk:lcs}
When the (random) dynamical system is a (random) shift, i.e.,
$X=\mathcal{A}^\N$ for some alphabet $\mathcal{A}$ and $T=\sigma$ (or
$T_\omega=\sigma$) with $\sigma$ the left shift, it was observed
in~\cite{BaLiRo} that studying the minimal distance between orbits is
equivalent to studying the length of the longest common substring between two
sequences, that is:
\begin{equation*}
-\log \min_{i, j < n} d(\sigma^i x, \sigma^j y)
= \max\{m: \exists 0\leq i,j< n \textrm{ s.t. } x_{i+k}=y_{j+k}\textrm{ for }k=0,\dotsc,m-1 \}.
\end{equation*}
In this case, balls will correspond to cylinders and the correlation
dimensions coincide with the annealed and quenched R\'enyi entropies
\[
{H}^\textrm{an}_2 = \underset{k \to \infty}{{\lim}}\frac{\log\sum \mu(C_k)^2
}{-k} \textrm{ and } { H}^\textrm{qu}_2 = \underset{k \to
\infty}{{\lim}}\frac{\log\sum \int \mu_\omega(C_k)^2d\Pbb(\omega)}{-k},
\]
where the sums are taken over all k-cylinders.
\end{rmk}
\subsection{The technical assumptions}

In this paragraph, we specify precisely the technical assumptions made in
Theorem~\ref{thm:main}. The various assumptions will also be useful to
highlight, in Paragraph~\ref{subsec:fine-grained}, which statements require
stronger or weaker assumptions.

A function $f : X \to \R$ is Lipschitz if it satisfies the inequality
$\abs{f(x)-f(y)}\leq C d(x,y)$ for all $x,y$. The best such $C$ is called the
Lipschitz constant of $f$ and denoted by $\cLip(f)$. We define the Lipschitz
norm of $f$, denoted by $\normLip{f}$, to be the sum of its sup norm and its
Lipschitz constant. In this way, $\normLip{fg} \le \normLip{f} \normLip{g}$.

Here is our main geometric assumption on the spaces we consider.
\begin{definition}
\label{def:local_complexity} A compact metric space $X$ has bounded local
complexity if there exists a constant $C_0$ such that, for any small enough
$r$, there exist a constant $k(r)<+\infty$ and points $x^{(r)}_1, \dotsc, x^{(r)}_{k(r)}$ in $X$ such that
the space is covered by the balls $(B(x^{(r)}_p, r))_{1\le p\le k(r)}$ and
any point $x$ belongs to at most $C_0$ balls $B(x^{(r)}_p, 4 r)$.
\end{definition}
Basic examples are shift spaces on finitely many symbols: for these, one may
take the balls $B(x^{(r)}_p, r)$ as the different cylinders of a given length
$N$, and they are all disjoint. Compact Riemannian manifolds have also
bounded local complexity: this follows from the fact that Euclidean spaces
are, using finitely many charts and an approximation argument to reduce to
this situation.

\begin{definition}
\label{def:lipschitz_fibers} Given a random dynamical system on $\Omega
\times X$, the random fiber measures $\mu_\omega$ depend on a Lipschitz way
on $\omega$ if there exists $C_1>0$ such that, for any Lipschitz function $f
: X \to \R$, for any $\omega, \omega'$,
\begin{equation*}
  \abs*{\int f \dd\mu_\omega - \int f \dd\mu_{\omega'}} \leq C_1 \normLip{f} d(\omega, \omega').
\end{equation*}
\end{definition}

Let us now turn to the various mixing conditions we need, for the base map or
the fiber maps.

\begin{definition}
\label{def:mixing_base} The dynamical system $\theta : \Omega \to \Omega$
mixes stretched exponentially if there exist $c_2 > 0$ and $C_2 > 0$ such
that, for any Lipschitz functions $f, g : \Omega \to \R$, for any $n\in \N$,
\begin{equation*}
  \abs*{\int f \cdot g\circ \theta^n \dd\Pbb - \pare*{\int f \dd\Pbb} \pare*{\int g\dd\Pbb}}
  \leq C_2 e^{-n^{c_2}} \normLip{f} \normLip{g}.
\end{equation*}
\end{definition}

We will need a stronger property, ensuring that there is quantitative mixing
for $4$ functions instead of $2$, if there is a large enough time gap between
the second and third functions. This property, that we call stretched
exponential $4$-mixing, implies the usual stretched exponential mixing of
Definition~\ref{def:mixing_base} (take $f_2 = 1$ and $g_1 = 1$)

\begin{definition}
\label{def:4mixing_base} The dynamical system $\theta : \Omega \to \Omega$
has stretched exponential $4$-mixing if there exist $c_2 > 0$ and $C_2 > 0$
such that, for any Lipschitz functions $f_1, f_2, g_1, g_2 : \Omega \to \R$,
for any $n\in \N$, for any $a\le b \le c$ with $b - a \ge n$,
\begin{multline}
\label{eq:4mixing}
  \abs*{\int f_1 \cdot f_2\circ \theta^a \cdot g_1\circ \theta^b \cdot g_2 \circ \theta^c  \dd\Pbb - \pare*{\int f_1 \cdot f_2 \circ \theta^a \dd\Pbb} \pare*{\int g_1 \cdot g_2 \circ \theta^{c-b}\dd\Pbb}}
  \\
  \leq C_2 e^{-n^{c_2}} \normLip{f_1} \normLip{f_2} \normLip{g_1} \normLip{g_2}.
\end{multline}
\end{definition}

Finally, we give a fiberwise mixing condition. In the case where $\Omega$ is
a point (i.e., for a deterministic dynamical system), as in
Theorem~\ref{thm:deterministic}, this is the only nontrivial assumption.

\begin{definition}
\label{def:mixing_fiber} The random dynamical system $S : \Omega \times X \to
\Omega \times X$ mixes stretched exponentially along the fibers if there
exist $c_3 > 0$ and $C_3 > 0$ such that, for any Lipschitz functions $f, g :
X \to \R$, for any $n\in \N$, for any $\omega \in \Omega$
\begin{equation*}
  \abs*{\int f \cdot g\circ T_\omega^n \dd\mu_\omega - \pare*{\int f \dd\mu_\omega} \pare*{\int g\dd\mu_{\theta^n \omega}}}
  \leq C_3 e^{-n^{c_3}} \normLip{f} \normLip{g}.
\end{equation*}
\end{definition}

\subsection{More fine-grained results}
\label{subsec:fine-grained}

Let $m_n(\omega; x, y) = \min_{i,j<n} d(T_\omega^i x, T_\omega^j y)$ be the
minimal distance between orbit segments of length $n$. For more precise
results, we will need to split it further according to the allowed gap
between $i$ and $j$. Accordingly, let
\begin{equation}
\label{eq:def_alpha}
  \alpha(n) = (\log n)^{C_4},
\end{equation}
where $C_4$ is large enough (we will need $C_4
\ge \max(2/c_2, 2/c_3)$, where $c_2$ and $c_3$ are the rates of stretched
exponential mixing along the basis and the fibers respectively). Let
\begin{align*}
  m_n^0(\omega; x, y) &= \min_{i < n} d(T_\omega^i x, T_\omega^i y),\\
  m_n^{\le}(\omega; x, y) &= \min_{\substack{i,j < n\\ \abs{j-i}\le \alpha(n)}} d(T_\omega^i x, T_\omega^j y),\\
  m_n^{>}(\omega; x, y) &= \min_{\substack{i,j < n\\ \abs{j-i}> \alpha(n)}} d(T_\omega^i x, T_\omega^j y),\\
  m_n^{\gg}(\omega; x, y) &= \min_{i< n/3,\ 2n/3\le j<n} d(T_\omega^i x, T_\omega^j y).
\end{align*}

We start with the upper bounds for $-\log m_n$, i.e., with the lower bounds
for $m_n$: we have to show that the orbits are never too close to each other.
For this, we will split $m_n$ as $\min(m_n^{\le}, m_n^>)$ and show separately
that these two terms are almost surely not too small.

\begin{prop}
\label{prop:Mnle_upper_bound} Assume that the space $X$ has bounded local
complexity. Then, for $\Pbb$-almost every $\omega$, for $\mu_\omega^{\otimes
2}$ every $x,y$, one has
\begin{equation*}
  \limsup_{n\to \infty} \frac{-\log m_n^{\le}(\omega; x,y)}{\log n} \le \frac{1}{\uHqu}.
\end{equation*}
\end{prop}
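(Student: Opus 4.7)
The plan is a Borel--Cantelli argument along the geometric subsequence $n_\ell = 2^\ell$, reduced to a uniform bound on the two-point function
\begin{equation*}
P(k,r) \coloneqq \int (\mu_\omega \otimes \mu_{\theta^k \omega})(\{d<r\}) \dd\Pbb(\omega).
\end{equation*}
Fixing $\epsilon,\delta>0$, a union bound over pairs $(i,j)$ with $i<n$ and $\abs{j-i}\le\alpha(n)$, combined with conditioning on $\omega$ (so that $(T_\omega^i x,T_\omega^j y)$ has law $\mu_{\theta^i\omega}\otimes\mu_{\theta^j\omega}$) and with the $\theta$-invariance of $\Pbb$ used to absorb the shift by $\min(i,j)$, yields
\begin{equation*}
(\Pbb\otimes\mu_\omega^{\otimes 2})(\{m_n^\le<r\})\le C\,n\,\alpha(n)\cdot\sup_{k\ge 0}P(k,r).
\end{equation*}

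The key step is the uniform bound $P(k,r)\le C r^{\uHqu-\epsilon}$, valid for every $k\ge 0$ and every $r$ small enough, which relies only on bounded local complexity (notably, not on Lipschitz dependence of the fibers, nor on any mixing). Take the cover $\{B(x_p,r)\}_{p\le k(r)}$ from Definition~\ref{def:local_complexity}. Any pair $(x,y)$ with $d(x,y)<r$ has both endpoints in some $B(x_p,2r)$, giving
\begin{equation*}
\mathbf{1}_{d(x,y)<r}\le \sum_p\mathbf{1}_{B(x_p,2r)}(x)\,\mathbf{1}_{B(x_p,2r)}(y).
\end{equation*}
Integrate, then apply Cauchy--Schwarz in $\omega$ together with $\theta$-invariance of $\Pbb$ to each term:
\begin{equation*}
\int\mu_\omega(B(x_p,2r))\,\mu_{\theta^k\omega}(B(x_p,2r))\dd\Pbb\le \int\mu_\omega(B(x_p,2r))^2\dd\Pbb.
\end{equation*}
Expand the square as a double integral over $(y,z)$ and use the $4r$-overlap bound $C_0$ to recombine the products into one diagonal quantity:
\begin{equation*}
\sum_p\mu_\omega(B(x_p,2r))^2\le C_0\int\mu_\omega(B(y,4r))\dd\mu_\omega(y).
\end{equation*}
Averaging against $\Pbb$ and invoking the definition of $\uHqu$ then yields $P(k,r)\le C r^{\uHqu-\epsilon}$.

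Plugging $r=n^{-1/\uHqu-\delta}$ into the union bound and choosing $\epsilon$ small compared to $\delta\uHqu^2$, the right-hand side becomes $C(\log n)^{C_4}n^{-c}$ with $c=c(\delta,\epsilon)>0$, hence summable along $n_\ell=2^\ell$. Borel--Cantelli then gives, for $\Pbb$-a.e.\ $\omega$ and $\mu_\omega^{\otimes 2}$-a.e.\ $(x,y)$, that $m_{n_\ell}^\le\ge n_\ell^{-1/\uHqu-\delta}$ eventually. Since $m_n^\le$ is non-increasing in $n$ (the admissible set of pairs $(i,j)$ grows with $n$, as $\alpha$ is increasing), interpolation between $n_\ell$ and $n_{\ell+1}=2n_\ell$ costs only a multiplicative constant, and a countable intersection over rational $\delta\downarrow 0$ concludes.

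The main obstruction I anticipate is the uniform-in-$k$ control of $P(k,r)$: a direct comparison of $\mu_{\theta^k\omega}$ to $\mu_\omega$ via Lipschitz dependence fails because $d(\omega,\theta^k\omega)$ is generically of order one, and fiberwise stretched exponential mixing gives no useful decay in the regime $k\le\alpha(n)=(\log n)^{C_4}$ relevant here. The Cauchy--Schwarz step, performed after the local-complexity decomposition into indicator squares, sidesteps both difficulties and delivers the quenched exponent $\uHqu$ cleanly, which is consistent with the proposition's sole hypothesis of bounded local complexity.
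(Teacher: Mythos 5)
Correct, and essentially the same approach as the paper: a first-moment/union bound, Cauchy--Schwarz together with the $\theta$-invariance of $\Pbb$ to replace $\mu_\omega\otimes\mu_{\theta^k\omega}$ by the quenched second moment, bounded local complexity to recombine into the quantity defining $\uHqu$, and a Borel--Cantelli argument along a subsequence followed by monotonicity of $m_n^{\le}$. Your use of indicators of balls in place of the Lipschitz bumps $\rho_p$ is a harmless simplification here, since no mixing estimate is invoked in this proposition.
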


\begin{prop}
\label{prop:Mngt_upper_bound} Assume that the space $X$ has bounded local
complexity, that the fiber measures $\mu_\omega$ depend in a Lipschitz way on
$\omega$, and that $\theta$ mixes stretched exponentially. Then, for
$\Pbb$-almost every $\omega$, for $\mu_\omega^{\otimes 2}$ every $x,y$, one
has
\begin{equation*}
  \limsup_{n\to \infty} \frac{-\log m_n^{>}(\omega; x,y)}{\log n} \le \frac{2}{\uHan}.
\end{equation*}
\end{prop}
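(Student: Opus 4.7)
The plan is a Borel--Cantelli argument along the dyadic subsequence $n_\ell := 2^\ell$. Fix $\epsilon > 0$ and set $r_\ell := n_\ell^{-(2/\uHan+\epsilon)}$. Define
$$M_\ell := \min_{\substack{i,j<n_{\ell+1}\\ |j-i|>\alpha(n_\ell)}} d(T_\omega^i x, T_\omega^j y),$$
so that $m_n^{>}(\omega;x,y)\ge M_\ell$ for every $n\in[n_\ell,n_{\ell+1}]$ by inclusion of the index sets. A union bound then reduces the proof to bounding, for every pair $(i,j)$ with $k:=|j-i|>\alpha(n_\ell)$, the quantity
$$P_{ij}(r) := (\Pbb\otimes\mu_\omega^{\otimes 2})\{d(T_\omega^i x,T_\omega^j y)<r\}$$
by $C r^{\uHan-\epsilon'}$ (plus a negligible error). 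Once this is in hand, summing over $n_{\ell+1}^2$ pairs produces a bound $n_{\ell+1}^{-c}$ for suitably small $\epsilon,\epsilon'>0$, which is summable in $\ell$; Borel--Cantelli and the comparison $m_n^{>}\ge M_{\lfloor\log_2 n\rfloor}$ then yield $\limsup(-\log m_n^{>})/\log n\le 2/\uHan+2\epsilon$, and letting $\epsilon\to 0$ along a countable sequence gives the claim.

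To bound $P_{ij}(r)$, I would first push forward the fibre measures to write
$$P_{ij}(r)=\int\int\int\mathbf{1}[d(u,v)<r]\,d\mu_{\theta^i\omega}(u)\,d\mu_{\theta^j\omega}(v)\,d\Pbb(\omega).$$
Using the $r$-net $\{x_p^{(r)}\}_{p\le k(r)}$ from Definition~\ref{def:local_complexity}, dominate the indicator by the Lipschitz product decomposition
$$\mathbf{1}[d(u,v)<r]\le\sum_{p=1}^{k(r)}\chi_p^{(1)}(u)\,\chi_p^{(2)}(v),$$
where $\chi_p^{(1)}$ is a Lipschitz bump equal to $1$ on $B(x_p^{(r)},r)$ and supported in $B(x_p^{(r)},2r)$, and $\chi_p^{(2)}$ equals $1$ on $B(x_p^{(r)},2r)$ and is supported in $B(x_p^{(r)},4r)$, both with Lipschitz constants $O(1/r)$. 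The $p$th term then factorises as $\int F_p(\theta^i\omega)\,G_p(\theta^j\omega)\,d\Pbb$, with $F_p(\omega'):=\int\chi_p^{(1)}\,d\mu_{\omega'}$ and $G_p(\omega'):=\int\chi_p^{(2)}\,d\mu_{\omega'}$ Lipschitz on $\Omega$ with norms $O(1/r)$ by Definition~\ref{def:lipschitz_fibers}. Invariance of $\Pbb$ (to set $\omega':=\theta^i\omega$) and stretched exponential mixing of $\theta$ then give
$$\int F_p(\omega')\,G_p(\theta^k\omega')\,d\Pbb(\omega')=\Big(\int\chi_p^{(1)}\,d\mu\Big)\Big(\int\chi_p^{(2)}\,d\mu\Big)+O\bigl(e^{-k^{c_2}}/r^2\bigr).$$
Summing over $p$, bounded local complexity ($\sum_p\mathbf{1}_{B(x_p^{(r)},4r)}\le C_0$) bounds the main term by $C_0\int\mu(B(u,6r))\,d\mu(u)\le r^{\uHan-\epsilon'}$ for $r$ small, by the very definition of $\uHan$.

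The principal obstacle is controlling the accumulated decorrelation error $k(r)\,e^{-k^{c_2}}/r^2$: the factor $1/r^2$ is the unavoidable cost of the bump Lipschitz constants, while $k(r)$ counts the terms. This is precisely what the choice $\alpha(n)=(\log n)^{C_4}$ with $C_4\ge 2/c_2$ is designed for: for $k>\alpha(n_\ell)$ one has $e^{-k^{c_2}}\le n_\ell^{-\log n_\ell}$, which is super-polynomial in $n_\ell$ and therefore absorbs the polynomial factors $k(r_\ell)/r_\ell^2$ arising in the standard examples of Definition~\ref{def:local_complexity}. In contrast with Proposition~\ref{prop:Mnle_upper_bound}, where the diagonal constraint $|j-i|\le\alpha(n)$ prevents any such decoupling and only the $1/\uHqu$ rate can be reached, the large-gap condition here lets base mixing render $\mu_{\theta^i\omega}$ and $\mu_{\theta^j\omega}$ effectively independent, recovering the annealed rate $2/\uHan$.
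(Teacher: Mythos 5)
Your proposal is correct and follows essentially the same route as the paper: a first-moment/union bound over the pairs $(i,j)$ with $|j-i|>\alpha(n)$, using a Lipschitz bump decomposition of the indicator together with the Lipschitz dependence of $\omega\mapsto\mu_\omega$ and base mixing to replace $\int R_p(\theta^i\omega)R_p(\theta^j\omega)\dd\Pbb$ by $(\int\rho_p\dd\mu)^2$ up to a super-polynomially small error controlled by $\alpha(n)$ and $k(r)\le r^{-C'_0}$, and then Borel--Cantelli along a subsequence with the same device (your $M_\ell$, the paper's $m'_{n_{s+1}}$) to handle the non-monotonicity of $m_n^>$. The asymmetric bumps $\chi_p^{(1)},\chi_p^{(2)}$ and the dyadic subsequence $n_\ell=2^\ell$ are cosmetic variations of the paper's symmetric $\rho_p$ and $n_s=\lfloor s^{2/\epsilon}\rfloor$; the substance is identical.
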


Combining the two previous propositions, and since $m_n = \min (m_n^\le,
m_n^>)$, one obtains almost surely
\begin{equation}
\label{eq:upper_bound}
  \limsup_{n\to \infty} \frac{-\log m_n(\omega; x,y)}{\log n} \le
  \max\acc*{\frac{2}{\uHan}, \frac{1}{\uHqu}},
\end{equation}
proving the first (easy) half of Theorem~\ref{thm:main}.

\medskip

Let us now deal with the lower bounds for $-\log m_n$, i.e., with the upper
bounds for $m_n$: we have to show that there are some times at which the
orbits are pretty close. We can select those times as we like. We will use
either $i=j$ (given by $m_n^0$) or $i$ and $j$ very far apart, i.e., $i<n/3$
and $2n/3\le j<n$ (given by $m_n^{\gg}$). In other words, we use the trivial
inequality $m_n \le \min (m_n^0, m_n^\gg)$, and we will get good upper bounds
for these two terms.

\begin{prop}
\label{prop:Mn0_lower_bound} Assume that the space $X$ has bounded local
complexity, that the fiber measures $\mu_\omega$ depend in a Lipschitz way on
$\omega$, that $\theta$ mixes stretched exponentially and that $S$ mixes
stretched exponentially along the fibers. Then, for $\Pbb$-almost every
$\omega$, for $\mu_\omega^{\otimes 2}$ every $x,y$, one has
\begin{equation*}
  \liminf_{n\to \infty} \frac{-\log m_n^0(\omega; x,y)}{\log n} \ge \frac{1}{\oHqu}.
\end{equation*}
\end{prop}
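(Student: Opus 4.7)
The plan is to apply the second moment method on the joint space $(\Omega \times X^2, \Pbb \otimes \mu_\omega^{\otimes 2})$, combined with Borel--Cantelli along a geometric subsequence. Fix $\eta > 0$, set $r_n \coloneqq n^{-(1/\oHqu - \eta)}$, and introduce
\begin{equation*}
  N_n \coloneqq \sum_{i<n} \mathbf{1}\acc{d(T_\omega^i x, T_\omega^i y) \le r_n}.
\end{equation*}
By Fubini, it suffices to prove that $N_{n_k} \ge 1$ for all large $k$ $(\Pbb \otimes \mu_\omega^{\otimes 2})$-almost surely, along $n_k \coloneqq 2^k$; monotonicity of $m_n^0$ in $n$ then yields the liminf bound, since for $n \in [n_{k-1}, n_k]$ one has $m_n^0 \le m_{n_{k-1}}^0 \le r_{n_{k-1}}$ and $\log n_{k-1}/\log n \to 1$, giving $\liminf (-\log m_n^0)/\log n \ge 1/\oHqu - \eta$; letting $\eta \to 0$ completes the argument. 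By $S$-invariance, $\E N_n = n I(r_n)$ with $I(r) \coloneqq \iint \mu_\omega(B(x,r)) \dd\mu_\omega(x) \dd\Pbb(\omega)$, and the limsup definition of $\oHqu$ gives $I(r) \ge r^{\oHqu + \eta'}$ for any $\eta' > 0$ and all small $r$; choosing $\eta'$ small enough that $\beta \coloneqq (1/\oHqu - \eta)(\oHqu + \eta') < 1$, one obtains $\E N_n \ge n^{1-\beta}$.

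For the second moment, write $\E N_n^2 = \sum_{i,j<n} Q_{|j-i|}(r_n)$ where $Q_k(r) \coloneqq (\Pbb \otimes \mu_\omega^{\otimes 2})\pare{d(x,y) \le r \text{ and } d(T_\omega^k x, T_\omega^k y) \le r}$ depends only on $|j-i|$ by $\theta$-invariance of $\Pbb$ combined with $(T_\omega^i)_* \mu_\omega = \mu_{\theta^i \omega}$. For short gaps $0 \le k \le \alpha(n) = (\log n)^{C_4}$ use the trivial bound $Q_k \le I(r_n)$, contributing $O(n \alpha(n) I(r_n))$. For long gaps $k > \alpha(n)$ the key estimate is
\begin{equation*}
  Q_k(r) \le I(r)^2 + C r^{-2} e^{-k^{c}}, \qquad c = \min(c_2, c_3),
\end{equation*}
proved as follows. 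Approximate $\mathbf{1}_{d \le r}$ from above by a Lipschitz tent function $\phi_r : X^2 \to [0,1]$ of Lipschitz norm $O(1/r)$. For fixed $v$, apply fiberwise stretched exponential mixing (Definition~\ref{def:mixing_fiber}) to the $u$-integral of $\phi_r(u,v)\,\phi_r(T_\omega^k u, T_\omega^k v)$ with Lipschitz factors of norm $O(1/r)$, decoupling at cost $O(r^{-2} e^{-k^{c_3}})$. Apply the same estimate to the resulting $v$-integral to produce a main term $\tilde g_\omega(r)\,\tilde g_{\theta^k \omega}(r)$, where $\tilde g_\omega(r) \coloneqq \iint \phi_r \dd\mu_\omega^{\otimes 2}$. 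Finally, Definition~\ref{def:lipschitz_fibers} implies that $\omega \mapsto \tilde g_\omega(r)$ is Lipschitz on $\Omega$ with norm $O(1/r)$, so base stretched exponential mixing (Definition~\ref{def:mixing_base}) yields $\int \tilde g_\omega(r)\,\tilde g_{\theta^k \omega}(r) \dd\Pbb \le I(r)^2 + O(r^{-2} e^{-k^{c_2}})$, completing the bound.

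Assembling the contributions and choosing $C_4$ large enough that $e^{-\alpha(n)^c} \le n^{-K}$ for any prescribed $K$, one obtains $\var N_n \le C n \alpha(n) I(r_n) + O(n^{2-K})$, whence Chebyshev gives
\begin{equation*}
  (\Pbb \otimes \mu_\omega^{\otimes 2})(N_n = 0) \le \frac{\var N_n}{(\E N_n)^2} \le C(\log n)^{C_4} n^{\beta - 1} + O(n^{-K'}).
\end{equation*}
Evaluated at $n_k = 2^k$ this is $C k^{C_4} 2^{k(\beta - 1)}$ plus a rapidly decaying tail, both summable in $k$; Borel--Cantelli then yields $N_{n_k} \ge 1$ for all large $k$ almost surely, and the proof concludes.

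The main obstacle is the long-gap bound on $Q_k$. It requires converting a correlation for the product system $(X^2, \mu_\omega^{\otimes 2}, T_\omega \times T_\omega)$ into single-fiber correlations by two successive applications of fiberwise mixing (integrating out $u$, then $v$) followed by base mixing in $\omega$. Each tent approximation contributes a factor $1/r$ to the Lipschitz norm, and the resulting polynomial blow-up in $r^{-1}$ must be defeated by the stretched exponential decay $e^{-k^c}$, which dictates $\alpha(n) = (\log n)^{C_4}$ with $C_4$ sufficiently large. A secondary technical point is the indicator-versus-tent approximation: one must ensure $\int \phi_r \dd\mu^{\otimes 2}$ remains comparable to $I(r)$ rather than $I(2r)$, which is arranged either by a sandwich argument or by an adaptive choice of tent width, with any constant-factor loss absorbed into $\eta$ at the end.
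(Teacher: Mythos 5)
Your overall strategy is the same as the paper's: second moment method, split into short gaps (trivial bound) and long gaps (decoupling via mixing), Chebyshev, Borel--Cantelli along a polynomial/geometric subsequence, then monotonicity of $m_n^0$. The execution differs in one interesting respect and has one genuine gap.

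The difference: the paper discretizes the indicator of the near-diagonal via the bounded-local-complexity bump decomposition $\sum_p \rho_p(x)\rho_p(y)$, so that all mixing estimates apply to sums of products of single-variable Lipschitz functions, whereas you use a single Lipschitz tent $\phi_r$ on $X^2$ and apply fiberwise mixing twice (integrating out $u$, then $v$), plus base mixing. Your route is cleaner and in fact does not invoke the bounded local complexity assumption, which is a real simplification worth noting.

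The gap: the claimed key estimate $Q_k(r) \le I(r)^2 + C r^{-2} e^{-k^c}$ is not what your argument produces. After the two fiberwise mixing steps and the base mixing step, the main term you obtain is $\pare*{\int \tilde g_\omega(r)\dd\Pbb(\omega)}^2$ with $\tilde g_\omega(r) = \iint \phi_r \dd\mu_\omega^{\otimes 2}$, and this is in general strictly larger than $I(r)^2$ (it lies between $I(r)^2$ and $I(Kr)^2$ for $K$ the outer support radius of $\phi_r$). But your first moment is computed with the indicator: $\E N_n = n I(r_n)$. Consequently
\[
\var N_n \;\ge\; n^2\pare*{\pare*{\int \tilde g_\omega \dd\Pbb}^2 - I(r_n)^2},
\]
and since $\int \tilde g_\omega \dd\Pbb$ can exceed $I(r_n)$ by a multiplicative constant bounded away from $1$ (roughly $2^{\Hqu}$ under polynomial decay), this lower bound is of order $(\E N_n)^2$. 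The Chebyshev ratio $\var N_n / (\E N_n)^2$ then does not tend to $0$, and the Borel--Cantelli argument collapses. Your remark that a "constant-factor loss can be absorbed into $\eta$" does not help here: absorbing a constant factor works on the main term, but not after the subtraction $\E N_n^2 - (\E N_n)^2$, where a constant-factor discrepancy between the two $n^2$-coefficients destroys all cancellation.

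The fix is small but necessary: base both moments on the same smoothed quantity $\tilde N_n := \sum_{i < n} \phi_{r_n}(T_\omega^i x, T_\omega^i y)$, not on the indicator count $N_n$. Then $\E \tilde N_n = n \int \tilde g_\omega \dd\Pbb$ matches the main term of $\E \tilde N_n^2$ exactly, the asymptotic exponent of $\int \tilde g_\omega \dd\Pbb$ in $r$ is still $\oHqu$ by sandwiching between $I(r)$ and $I(Kr)$, and $\tilde N_n = 0$ implies $m_n^0 > r_n$. This is precisely the role the paper's $S_n^0$ plays: its first and second moments are computed for the same Lipschitz object, so the variance cancels cleanly.

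A minor secondary point: your expression $I(r) = \iint \mu_\omega(B(x,r)) \dd\mu_\omega(x) \dd\Pbb(\omega)$ uses $B(x,r)$ for the \emph{closed} ball to be consistent with $\mathbf{1}_{d \le r}$; the paper's $\oHqu$ is defined with whichever convention, and this only costs an $\epsilon$, so it is harmless, but worth being consistent about once the tent function is introduced.
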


\begin{prop}
\label{prop:Mngg_lower_bound} Assume that the space $X$ has bounded local
complexity, that the fiber measures $\mu_\omega$ depend in a Lipschitz way on
$\omega$, that $\theta$ has stretched exponential $4$-mixing and that $S$
mixes stretched exponentially along the fibers. Assume also that $S$ is
Lipschitz. Then, for $\Pbb$-almost every $\omega$, for $\mu_\omega^{\otimes
2}$ every $x,y$, one has
\begin{equation*}
  \liminf_{n\to \infty} \frac{-\log m_n^\gg(\omega; x,y)}{\log n} \ge \frac{2}{\oHan}.
\end{equation*}
\end{prop}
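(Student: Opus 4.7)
The plan is a second-moment / Paley--Zygmund argument, combined with Borel--Cantelli along a subsequence. Fix $\epsilon > 0$ and set $r_n := n^{-(2/\oHan - \epsilon)}$. Define
\[
N_n(\omega; x, y) := \#\acc*{(i, j) : i < n/3,\ 2n/3 \le j < n,\ d(T_\omega^i x, T_\omega^j y) \le r_n}.
\]
It suffices to show $N_n \ge 1$ for all $n$ large, $\Pbb \otimes \mu_\omega^{\otimes 2}$-almost surely, as this gives $m_n^\gg \le r_n$ and hence the desired $\liminf$ lower bound after letting $\epsilon \to 0$.

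For the first moment, I approximate $\mathbf{1}_{B(\cdot,r_n)}$ by Lipschitz functions of Lipschitz norm $O(1/r_n)$ (the resulting $r_n$-perturbation is absorbed into $\epsilon$). Fiberwise stretched exponential mixing decouples $T_\omega^i x$ from $T_\omega^j y$ across the large gap $j-i \ge n/3$; stretched exponential mixing of $\theta$ together with the Lipschitz dependence of $\omega \mapsto \mu_\omega$ then replaces the random fiber measures $\mu_{\theta^i\omega}, \mu_{\theta^j\omega}$ by the averaged measure $\mu$. This yields $E[N_n] = (1+o(1)) (n/3)^2 I(r_n)$, where $I(r) := \int \mu(B(z,r)) d\mu(z)$. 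The $\limsup$ definition of $\oHan$ gives $I(r_n) \ge r_n^{\oHan + \delta}$ for $n$ large, so $E[N_n] \ge n^\eta$ for some $\eta = \eta(\epsilon, \delta) > 0$.

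For the variance, I expand $E[N_n^2]$ over 4-tuples $(i_1, j_1, i_2, j_2)$ and split into a ``generic'' part (all four indices mutually separated by at least $\alpha(n)$) and a ``near-diagonal'' part. On the generic part, stretched exponential $4$-mixing of $\theta$ applied across the middle gap, combined with fiberwise mixing across the other gaps, factorises the joint probability as $(1+o(1)) P(\mathcal{E}_1) P(\mathcal{E}_2)$, producing an aggregate contribution of $(1+o(1)) E[N_n]^2$. The near-diagonal tuples with genuine coincidences in time are at most $O(n^3 \alpha(n))$ in number, each bounded by $I(r_n)$, hence lower order once $E[N_n]$ dominates any polylogarithm. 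The partial-diagonal tuples (for instance $i_1 = i_2$ with $j_1, j_2$ separated by at least $\alpha(n)$) need more care: fiberwise mixing decouples $T_\omega^{j_1} y$ from $T_\omega^{j_2} y$ and reduces the joint probability to the three-point correlation integral $\int \mu(B(z, r_n))^2 d\mu(z)$, which is dominated by $I(r_n)$ using a covering-with-bounded-multiplicity argument based on the bounded local complexity of $X$. Combining, $E[N_n^2] = (1+o(1)) E[N_n]^2$, so Paley--Zygmund yields $P(N_n = 0) \to 0$ at polynomial rate.

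To upgrade convergence in probability to almost-sure, I take $n_k := k^K$ with $K$ large enough that $\sum_k P(N_{n_k}^\ast = 0) < \infty$, where $N_{n_k}^\ast$ counts pairs in the restricted range $\{(i, j) : i < n_k/3,\ 2 n_{k+1}/3 \le j < n_k\}$ with distance $\le r_{n_{k+1}}$. This range is non-empty for $k$ large since $n_{k+1}/n_k \to 1 < 3/2$, and is contained in the index range of $m_n^\gg$ for every $n \in [n_k, n_{k+1}]$; hence Borel--Cantelli and the monotonicity $r_{n_{k+1}} \le (1+o(1)) r_n$ give $m_n^\gg \le r_n$ eventually. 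The main obstacle will be the partial-diagonal variance estimate: the naive per-tuple bound $I(r_n)$ summed over $O(n^3)$ tuples can exceed $E[N_n]^2$ in the relevant regime, and the subtle step is the combination of fiberwise mixing (reducing the partial-diagonal joint probability to the three-point correlation) with bounded local complexity (controlling this correlation by $I(r_n)$) that makes the second-moment method close.
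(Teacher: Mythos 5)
Your overall strategy matches the paper's: a second-moment method with a discretised counting function, a generic/near-diagonal split of the four-tuples, stretched exponential $4$-mixing for the generic block, and Borel--Cantelli along a polynomial subsequence with a monotonicity fix. The gap is precisely in the step you flag as the ``main obstacle,'' and your proposed fix does not close it.

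Concretely, for the partial-diagonal tuples (say $|i'-i|\le\alpha(n)$, $j,j'$ free, or symmetrically), there are about $n^3\alpha(n)$ of them, and the per-tuple bound $I(r_n)$ gives a contribution of order $n^3\alpha(n)I(r_n)$. Compared with $\E[N_n]^2\asymp n^4 I(r_n)^2$, the ratio is $\alpha(n)/(n\,I(r_n))$. With the scaling $r_n\asymp n^{-(2-\epsilon')/\oHan}$ that makes $\E[N_n]$ grow polynomially (but slowly), one has $n\,I(r_n)\asymp n^{-1+\epsilon''}\to 0$ for $\epsilon$ small, so this ratio diverges and the second moment does not close. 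Your subsequent remark --- ``reduces to $\int\mu(B(z,r))^2\,\dd\mu(z)$, dominated by $I(r_n)$'' --- is the trivial bound $\mu(B(z,r))^2\le\mu(B(z,r))$, which is exactly the bound you just said can exceed $\E[N_n]^2$; asserting that bounded local complexity saves it is a statement without a mechanism. What the paper actually proves (for the discretised objects $\rho_p$, $R_p$) is a bound by $A_n^{3/2}$ per tuple, not $A_n=I(r_n)$: after using $S$-invariance of $\Pbb\otimes\mu_\omega$ and Cauchy--Schwarz in the $y$-integral, bounded local complexity controls $\bigl(\sum_p a_p\rho_p(y)\bigr)^2\le C_0\sum_p a_p^2\rho_p(y)$, which after integrating gives $\sum_p(\int R_p\,\dd\Pbb)^3$, and the $\ell^{2/3}$ monotonicity (concavity of $x^{2/3}$) bounds this by $A_n^{3/2}$. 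The extra half power, $A_n^{1/2}\asymp n^{-1+O(\epsilon)}$, is exactly what kills the extra factor of $n$. Your proposal lacks this chain and therefore does not establish $\E[N_n^2]=(1+o(1))\E[N_n]^2$.

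There is also a secondary omission. Your case analysis handles $i_1=i_2$ with $j_1,j_2$ far, but not $0<|i_1-i_2|\le\alpha(n)$ with $j_1,j_2$ far. That case forces you to treat the conditional expectation $F(\omega')=\int\rho_p(y')\,\rho_q(T_{\omega'}^{|i'-i|}y')\,\dd\mu_{\omega'}(y')$ as a Lipschitz function of the base point in order to apply $\theta$-mixing; its Lipschitz constant is of order $C_S^{\alpha(n)}r^{-2}$, which is super-polynomial in $n$ and must be absorbed by the stretched exponential $\phi(n/3)$. This is exactly where the two assumptions you never use --- $S$ Lipschitz and stretched exponential (not merely superpolynomial) $4$-mixing --- enter the proof, and your write-up needs to account for both.
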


Combining the two previous propositions, and since $m_n \leq \min (m_n^0,
m_n^\gg)$, one obtains almost surely
\begin{equation*}
  \liminf
  _{n\to \infty} \frac{-\log m_n(\omega; x,y)}{\log n} \ge
  \max\acc*{\frac{2}{\oHan}, \frac{1}{\oHqu}},
\end{equation*}
proving the second (harder) half of Theorem~\ref{thm:main}.

The proofs of all these theorems are given in Section~\ref{sec:proofs}, after
several examples are discussed in Section~\ref{sec:examples}. They go from
the easiest one (Proposition~\ref{prop:Mnle_upper_bound}) to the hardest one
(Proposition~\ref{prop:Mngg_lower_bound}). The results on the upper bounds
(Propositions~\ref{prop:Mnle_upper_bound} and~\ref{prop:Mngt_upper_bound})
are given in Paragraph~\ref{ssubec:proof_upper}. They are based on a first
moment computation. The results on the lower bounds
(Propositions~\ref{prop:Mn0_lower_bound} and~\ref{prop:Mngg_lower_bound}) are
then established in Paragraph~\ref{subsec:proof_lower}. They rely on
technically more involved second moment estimates, that require stronger
mixing conditions.

\section{Examples}
\label{sec:examples} We now present an explicit example with a phase
transition, as well as two classes of random dynamical systems which satisfy
the hypothesis of Theorem~\ref{thm:main}. The first class is constructed from
finitely many uniformly expanding maps whereas the second is given by a
continuous family of non-uniformly expanding, local diffeomorphisms on a
manifold.

\subsection{An explicit example with a phase transition}
\label{subsec:transition} We construct in this section a simple random
Bernoulli shift where, depending on the sample measures, we will obtain a
phase transition. As explained in Remark~\ref{rmk:lcs}, in this example we
will compute R\'enyi entropies which correspond to correlation dimensions on
shift spaces.

Let $(\Omega,\theta)$ be the full shift on the symbolic space
$\Omega=\{A,B\}^\Z$ and let $\Pbb=\mathcal{P}^\Z$ with
$\mathcal{P}(A)=\mathcal{P}(B)=\frac{1}{2}$. We then consider the one-sided
shift on $X=\{0,1\}^\N$ as a random subshift by constructing a random
Bernoulli measure as follows. Let $p_A,p_B\in(0,1)$. The random Bernoulli
measure $\{ \mu_\omega : \omega \in \Omega \} $ is defined by
\[
\mu_\omega([x_0,\dotsc, x_n])=\mu_{\omega_0}(x_0)\mu_{\omega_1}(x_1)\dotsm
\mu_{\omega_n}(x_n),
\]
with $\mu_A(0)=p_A$ and $\mu_A(1)=1-p_A$ on the one hand, and $\mu_B(0)=p_B$
and $\mu_B(1)=1-p_B$ on the other hand.

To compute the R\'enyi entropy, observe that for a cylinder $C_n=[x_0,\dotsc,
x_{n-1}]$
\begin{align*}
\mu(C_n) &=
\int\mu_\omega([x_0,\dotsc, x_{n-1}])d\Pbb(\omega)
= \int \prod_{i=0}^{n-1} \mu_{\omega_i}(x_i) d\Pbb(\omega)
= \prod_{i=0}^{n-1} \int \mu_{\omega_i}(x_i) d\Pbb(\omega_i)
\\
&= \prod_{i=0}^{n-1} \left(\frac{1}{2}\mu_A(x_i)+\frac{1}{2}\mu_B(x_i)\right)
= \frac{1}{2^n} \left(p_A + p_B\right)^{\# \{i: x_i =0\}} \left(2 - p_A - p_B\right)^{\# \{i: x_i =1\}}.
\end{align*}
Thus, by the binomial identity,
\[\sum_{C_n}\mu(C_n)^2= \frac{1}{2^{2n}}
\left(\left(p_A + p_B\right)^2+\left( 2-p_A -p_B\right)^2\right)^{n}
= \frac{1}{2^{n}}
\left(  \left(p_A + p_B\right)^2 -2(p_A + p_B) + 2 \right)^{n},
\]
which implies that
\[
H_2^{\textrm{an}} = -\log\left(\frac{1}{2}\left(  \left(p_A + p_B\right)^2 -2(p_A + p_B) + 2\right)\right).
\]
Moreover, it follows by the same arguments that
\begin{align*}
\int\mu_\omega(C_n)^2 d\Pbb(\omega)
&= \prod_{i=0}^{n-1} \int \mu_{\omega_i}(x_i)^2  d\Pbb(\omega_i)
=   \prod_{i=0}^{n-1} \left(\frac{1}{2} \mu_A(x_i)^2+\frac{1}{2}\mu_B(x_i)^2\right)
\\ & =  \frac{1}{2^n}\left(p_A^2 + p_B^2\right)^{\# \{i: x_i =0 \}} \left((1 - p_A)^2 + (1-p_B)^2 \right)^{\# \{i: x_i =1\}}.
\end{align*}
Thus,
\[\sum_{C_n}\int\mu_\omega(C_n)^2d\Pbb(\omega)=\left(\frac{1}{2}\left(p_A^2+p_B^2+(1-p_A)^2+(1-p_B)^2\right)\right)^{n}\]
and
\begin{align*}
H_2^\textrm{qu}
& =-\log\left(\frac{1}{2}\left(p_A^2+p_B^2+(1-p_A)^2+(1-p_B)^2\right)\right)
\\& = -\log\left(\left(p_A^2 - \tfrac12\right)^{2} +\left(p_B^2 -
\tfrac12\right)^{2} + \tfrac12  \right) .
\end{align*}
Proposition~\ref{prop:ruelle-expanding-semigroup} below shows that the
hypothesis of Theorem~\ref{thm:main} are satisfied. Hence, for $\Pbb$-almost
every $\omega$, for $\mu_\omega \otimes \mu_\omega$-almost every $(x,y)$,
\begin{multline*}
\frac{-\log m_n(\omega;x,y)}{\log n} \xrightarrow[n\to\infty]{}
\max\left\{\frac{2}{H_2^{\textrm{an}}},\frac{1}{H_2^\textrm{qu}}\right\} \\
= \max \left\{
\frac{2}{-\log \frac{1}{2} \left( \left(p_A + p_B\right)^2 -2(p_A + p_B) + 2 \right)}, \frac{1}{-\log\left(\left(p_A^2 - \tfrac12\right)^{2} +\left(p_B^2 - \tfrac12\right)^{2} + \tfrac12  \right)}
\right\}.
\end{multline*}
The behavior of this maximum depends on the values of $p_A$ and $p_B$ and
some simple choices can give us distinctive behaviors.  If $p_A$ and $p_B$
are close enough (for example if $\abs{p_A-p_B}\leq\frac{1}{2} $) then
$\max\left\{{2}/{H_2^\textrm{an}},{1}/{H_2^\textrm{qu}}\right\}={2}/{H_2^\textrm{an}}$.
In this case, we obtain the same behaviour as observed
in~\cite[Example~2.1]{lcs-random}. However, if $p_A$ and $p_B$ are
sufficiently far from each other, there is a phase transition as the quenched
parameter becomes dominant. In order to obtain a precise description of these
domains, it suffices to determine the separating curve given by
$H_2^\textrm{an}(p_A,p_B) = 2 H_2^\textrm{qu}(p_A,p_B)$ (see the left hand
side of Figure~\ref{figure:phase-transition}). The same argument then gives
rise to the contours (or level sets) of the function $(p_A , p_B) \mapsto
\max\left\{{2}/{H_2^\textrm{an}},{1}/{H_2^\textrm{qu}}\right\}$, which are
displayed on the right hand side of Figure~\ref{figure:phase-transition}.
Observe that the formulas for $H_2^\textrm{an}$ and $H_2^\textrm{qu}$ imply
that these contours consist of straight lines of slope $-1$ and circle
segments with center $(1/2,1/2)$.
\begin{center}
\begin{figure}[h]\label{figure:phase-transition}
\includegraphics[width=0.9\textwidth]{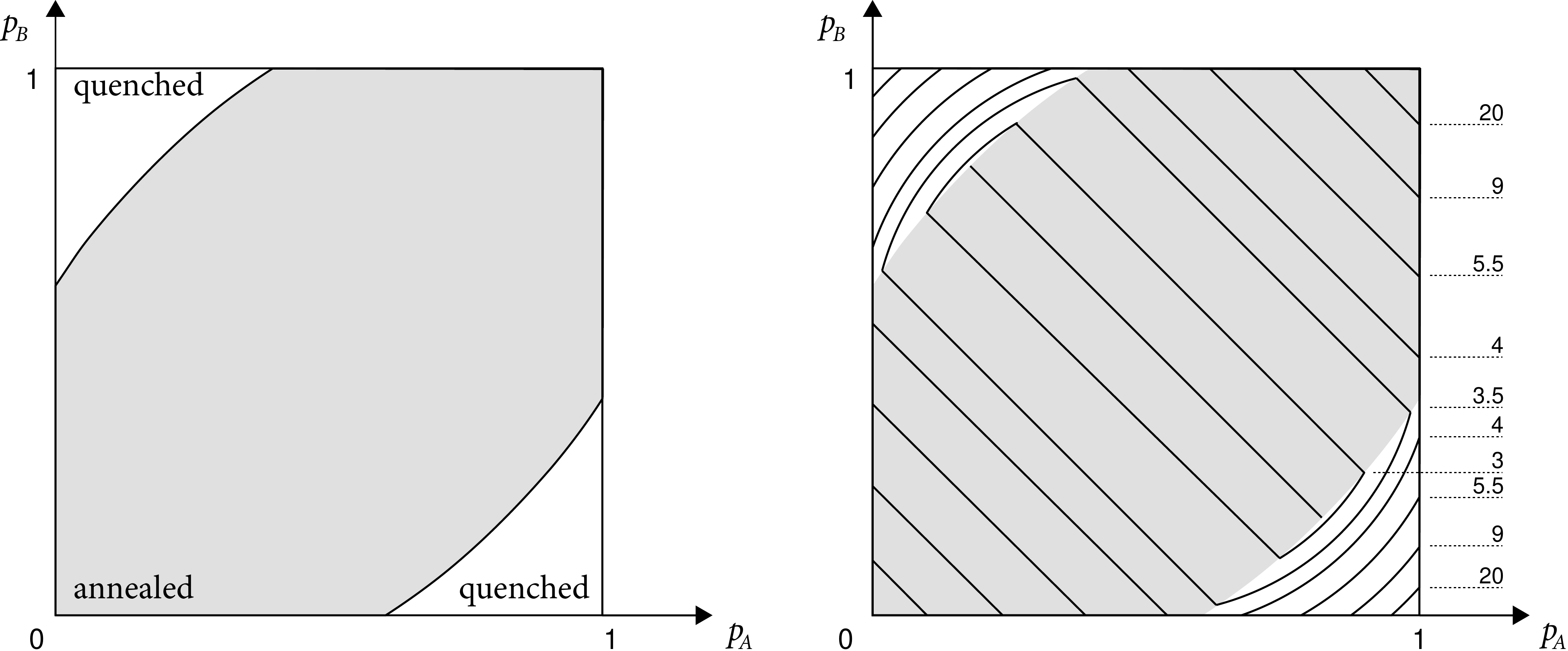}
\caption{Regions with quenched and annealed behaviour}
\end{figure}
\end{center}
For example, if we choose $p_B=1-p_A$, and set $c_\pm \coloneqq
\frac{1}{2} \pm \frac{1}{2} \sqrt{ 2 \sqrt{\sqrt{2} -1}  - 1 }$, then
\[\underset{n \to \infty}{{\lim}}\frac{-\log m_n(\omega;x,y)}{\log n} =\max\left\{\frac{2}{H_2^\textrm{an}},\frac{1}{H_2^\textrm{qu}}\right\}=
\begin{cases}
 \frac{1}{H_2^\textrm{qu}} &: \phantom{_+}0<p_A \leq c_- \\
 \frac{2}{H_2^\textrm{an}} &: c_-< p_A \leq c_+ \\
 \frac{1}{H_2^\textrm{qu}} &: c_+ < p_A \leq 1.
\end{cases} \]
To illustrate this phase transition, the graph of $p_A \mapsto  \max\left\{ {2}/{H_2^\textrm{an}},{1}/{H_2^\textrm{qu}}\right\}$, provided that $p_A=1-p_B$, is presented in Figure \ref{fig:graph}.
\begin{center}
\begin{figure}[h]
\includegraphics[width=0.5\textwidth]{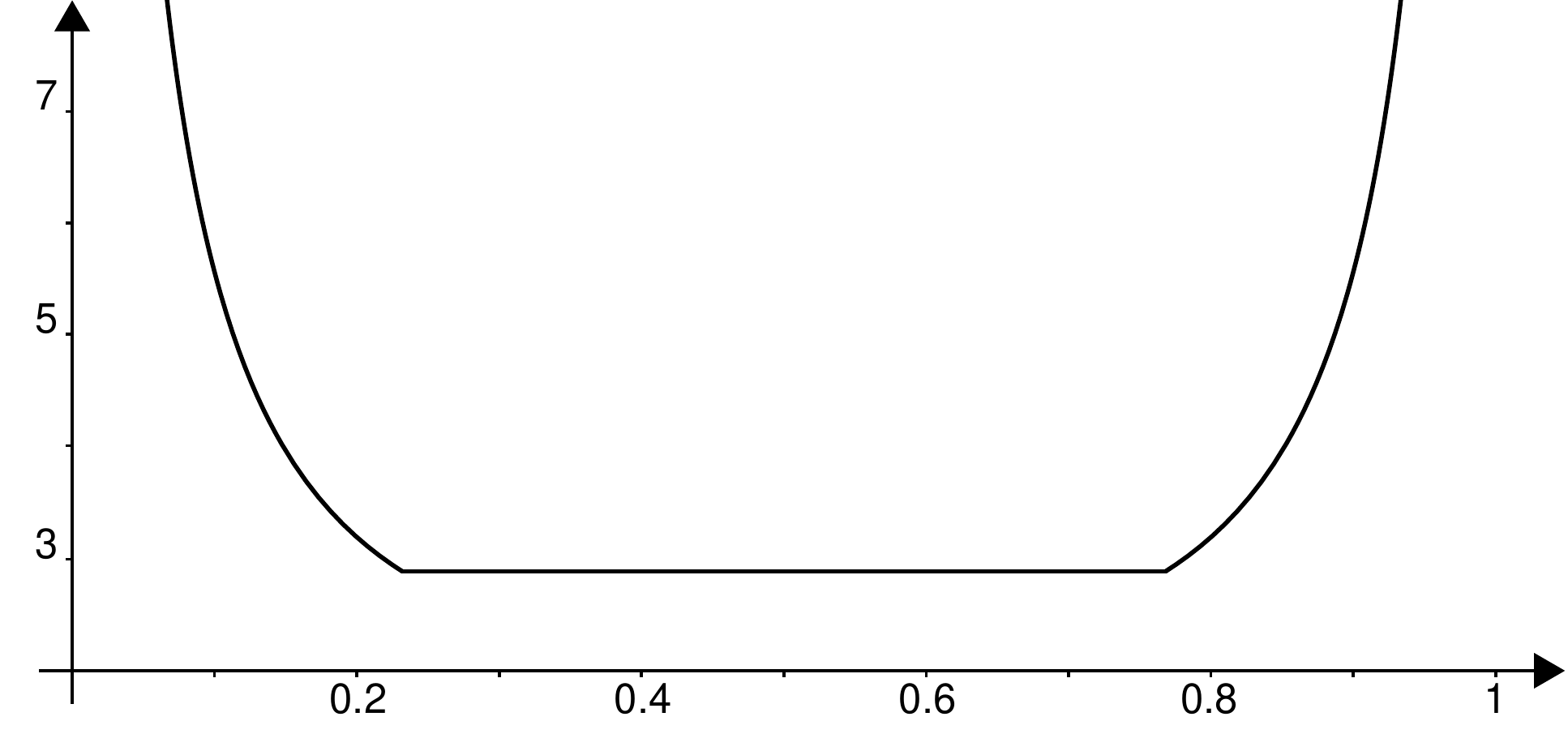}
\caption{Graph of $ - \lim_{n \to \infty} \frac{\log{ m_n(\omega;x,y)}}{\log n}$}
\label{fig:graph}
\end{figure}
\end{center}

\subsection{Finitely many Ruelle expanding maps.} \label{subsec:Finitely many Ruelle expanding maps}

In this paragraph, we describe a simple class of random dynamical systems to
which Theorem~\ref{thm:main} applies.

We begin with the description of the fibre maps. Let $(X,d)$ be a compact
metric space of bounded local complexity and recall that a
Lipschitz-continuous and surjective map $T: X \to X$ is \emph{Ruelle
expanding} if there exist $a>0$ and  $\lambda \in (0, 1)$, such that for any
$x, {y}, \tilde{x} \in X$ with $d(x, {y})<a$ and $T(\tilde{x})=x$, there
exists a unique $\tilde{y}\in X$ with $T(\tilde{y})={y}$ and $d(\tilde
x,\tilde y) < a$. Moreover, $d(\tilde{x}, \tilde{y}) \leq  \lambda d(x,y)$.
This class of maps was introduced by Ruelle
in~\cite{Ruelle--The-Thermodynamic-Formalism-For--CMP1989} and contains
subshifts of finite type as well as uniformly expanding maps on manifolds.

Now assume that $T_1, \dotsc, T_k: X \to X$ are Ruelle expanding maps which
are jointly mixing in the following sense. For any pair of nonempty open sets
$U,V \subset X$ we require that there exists $m \in \mathbb{N}$ such that
$(T_{i_n} \circ  \dotsm T_{i_1})^{-1}(U) \cap V \neq \emptyset$ for each
choice $i_1,\dotsc i_n \in \{1,\ldots k\}$ and $n> m$. In order to construct
the skew product, assume that $\Omega \subseteq \{1, \ldots k\}^\mathbb{Z}$
is a topologically mixing subshift of finite type and let
\[S : \Omega \times X \to \Omega \times X, ((\omega_i), x) \mapsto  (\sigma((\omega_i)),  T_{\omega_0}(x)),  \]
where $\sigma$ refers to the left shift. Here, it is worth noting that the
choice of $\Omega$ as a shift space is natural in the setting of finitely
many maps.

It remains to construct measures $\mathbb{P}$ and $\{ \mu_\omega : \omega \in
\Omega \}$ which satisfy the Lipschitz and mixing conditions in
Definitions~\ref{def:lipschitz_fibers},~\ref{def:mixing_fiber}
and~\ref{def:4mixing_base}. In order to do so, we first define a metric on
$\Omega$ by $d_s(\omega,\tilde\omega) \coloneqq s^{\min \{\abs{k} \st
\omega_k \neq \tilde\omega_k\}}$ for some fixed $s \in (0,1)$. Secondly,  we
fix Lipschitz continuous functions $\psi: \Omega \to \mathbb{R}$ and
$\varphi_i: X \to \mathbb{R}$ for $i=1, \ldots k$ and assume that
$\mathbb{P}$ is the unique Gibbs measure associated to $\psi$
(see~\cite{Bowen--Equilibrium-States-And-The--1975}). For the construction of
$\mu_\omega$, we proceed as follows. It is well known that the operator
defined by
\begin{equation}
\label{eq:def_transfer_operator}
\mathcal{L}_i(f)(x) \coloneqq \sum_{T_i(y)=x}
e^{\varphi_i(y)}f(y)
\end{equation}
acts on the space of Lipschitz functions. Furthermore,
as shown
in~\cite[Prop.~6.3]{Stadlbauer-Varandas-Zhang--Quenched-And-Annealed-Equilibrium--2020},
there exists $a> 0$ such that for any $\omega \in \Omega$, there exists a
probability measure $\mu_\omega$ such that for any Lipschitz continuous
function $f$ and $m,n > 0$,
\begin{equation} \label{eq:definition-of-mu_w-through-quotients}
\norm*{\frac{ \mathcal{L}_{\omega_m} \dotsm \mathcal{L}_{\omega_0}(f \,  \mathcal{L}_{\omega_{-1}} \dotsm \mathcal{L}_{\omega_{-n}}(\mathbf{1}) ) }
  {  \mathcal{L}_{\omega_m} \dotsm  \mathcal{L}_{\omega_{-n}}(\mathbf{1}) }  -  \int f\dd  \mu_\omega}_\infty \leq C e^{-a \min\{m,n\}} \cLip(f),
\end{equation}
where $\cLip(f)$ refers to the best Lipschitz constant of $f$.

\begin{prop} \label{prop:ruelle-expanding-semigroup}
Assume that $T_1, \ldots T_k$ are jointly mixing Ruelle expanding maps of the
compact metric space of bounded local complexity $X$ and that
$(\Omega,\sigma)$ is a two-sided, topologically mixing subshift of finite
type. Furthermore, assume that $\psi: \Omega \to \mathbb{R}$ and $\varphi_i:
X \to \mathbb{R}$ are Lipschitz continuous. Then the conclusions of
Theorem~\ref{thm:main} hold with respect to the equilibrium state
$\mathbb{P}$ of $\psi$ and $\{\mu_\omega : \omega \in \Omega\}$ as defined
in~\eqref{eq:definition-of-mu_w-through-quotients}.
\end{prop}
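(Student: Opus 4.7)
The plan is to verify, one by one, each of the hypotheses of Theorem~\ref{thm:main}; the conclusion then follows directly. Most of these are either part of the standing assumptions or direct outputs of the thermodynamic formalism developed in~\cite{Stadlbauer-Varandas-Zhang--Quenched-And-Annealed-Equilibrium--2020}, of which \eqref{eq:definition-of-mu_w-through-quotients} is the key consequence we use. The bounded local complexity of $X$ is part of the assumptions, and $S(\omega,x)=(\sigma\omega, T_{\omega_0}(x))$ is Lipschitz because $\sigma$ has Lipschitz constant $s^{-1}$ on $(\Omega,d_s)$ and each $T_i$ is Lipschitz by the Ruelle expanding assumption (the case $\omega_0\neq\omega_0'$, where $T_{\omega_0}$ and $T_{\omega_0'}$ differ, is handled trivially since then $d_s(\omega,\omega')\geq 1$ and one can bound the second coordinate by $\mathrm{diam}(X)$). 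We are also free to shrink $s$ at any point without disturbing anything already established, and this will be used below.

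Next I would verify the two mixing conditions. The Gibbs measure $\Pbb$ of a Lipschitz potential on a mixing subshift of finite type is exponentially $\phi$-mixing, a classical consequence of the spectral gap of the transfer operator associated to $\psi$. To deduce the four-function estimate of Definition~\ref{def:4mixing_base}, I would approximate each of the four Lipschitz observables by its conditional expectation on cylinders of length $\lfloor n/3\rfloor$ centred at the relevant coordinates among $0,a,b,c$; since each observable is Lipschitz with respect to $d_s$ and such cylinders have $d_s$-diameter $s^{\lfloor n/3\rfloor}$, the approximation error is exponentially small in $n$. The four resulting cylinder-measurable observables are bounded, and the $\phi$-mixing bound applied across the gap of length at least $n/3$ between the central blocks around $a$ and $b$ gives the desired factorization with an exponentially small residue. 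Fiberwise stretched exponential mixing (Definition~\ref{def:mixing_fiber}) is essentially the quenched decay of correlations of~\cite{Stadlbauer-Varandas-Zhang--Quenched-And-Annealed-Equilibrium--2020}: combining the dual identity $\int f\cdot g\circ T_\omega^n\dd\mu_\omega = \int (\mathcal{L}_{\omega_{n-1}}\dotsm\mathcal{L}_{\omega_0}f)\cdot g\dd\mu_{\theta^n\omega}$ with the sup-norm approximation \eqref{eq:definition-of-mu_w-through-quotients} of the normalised transfer-operator iterates by $\int f\dd\mu_\omega$ yields the required exponential bound $C e^{-an}\normLip{f}\normLip{g}$.

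For the Lipschitz dependence $\omega\mapsto\mu_\omega$, note that if $d_s(\omega,\tilde\omega)=s^N$ then $\omega_i=\tilde\omega_i$ for $|i|<N$, so in \eqref{eq:definition-of-mu_w-through-quotients} applied with $m=n=\lfloor N/2\rfloor$ the quotient on the left is one and the same function of $\omega$ and of $\tilde\omega$; subtracting the two bounds yields $|\int f\dd\mu_\omega - \int f\dd\mu_{\tilde\omega}|\le 2Ce^{-aN/2}\cLip(f)$, which after shrinking $s$ so that $s\ge e^{-a/2}$ becomes $\le C_1 d_s(\omega,\tilde\omega)\normLip{f}$, as required. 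The one remaining and most delicate point, and the main obstacle I anticipate, is the existence of $\Han$ and $\Hqu$. For this I would use the fiberwise Bowen Gibbs property for $\mu_\omega$ also contained in~\cite{Stadlbauer-Varandas-Zhang--Quenched-And-Annealed-Equilibrium--2020}: cylinders of generation $n$ in $X$ have $\mu_\omega$-measure comparable, with uniform constants, to $\exp(S_n^\omega \varphi - S_n^\omega P)$. Since for Ruelle expanding maps a cylinder of generation $n$ has diameter comparable to $\lambda^n$, the integral $\int \mu_\omega(B(x,r))\dd\mu_\omega(x)$ is, up to bounded multiplicative factors, $\sum_C \mu_\omega(C)^2$ taken over cylinders of generation $n\approx \log(1/r)/\log(1/\lambda)$; averaging in $\omega$ and using the standard submultiplicative behaviour of these partition sums for Gibbs systems gives convergence of both $-n^{-1}\log\sum_C \int \mu_\omega(C)^2\dd\Pbb(\omega)$ and $-n^{-1}\log\sum_C \mu(C)^2$, hence the existence of $\Hqu$ and $\Han$.
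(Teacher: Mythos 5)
Your overall strategy matches the paper's: verify each hypothesis of Theorem~\ref{thm:main} directly for the skew product built from $\mathbb{P}$ and the measures of \eqref{eq:definition-of-mu_w-through-quotients}. The treatments of the Lipschitz dependence $\omega\mapsto\mu_\omega$, of the fiberwise mixing, and of the $4$-mixing of $\Pbb$ are all in the same spirit as the paper's, the last one differing only in execution: the paper passes to the one-sided factor $(\Omega_+,\theta_+)$ and runs the normalized transfer operator $P$, whereas you approximate by cylinder conditional expectations and invoke the classical mixing of Gibbs measures. Both are valid routes to Definition~\ref{def:4mixing_base}.

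Two substantive remarks. First, ``shrinking $s$'' to ensure $s\ge e^{-a/2}$ is the wrong direction: to make $s^N$ dominate $e^{-aN/2}$ you must \emph{enlarge} $s$. The paper instead passes to $d_t$ with $t=\max\{s,e^{-a}\}$, and the point is more than cosmetic, since enlarging the metric parameter enlarges the Lipschitz class of observables on $\Omega$, so the $4$-mixing established with respect to $d_s$ must then be upgraded to the larger $d_t$-Lipschitz class --- this is exactly the role of the paper's Lemma~\ref{lem:extend_Lip_Hol} on passing from Lipschitz to H\"older observables. Second, your sketch for the existence of $\Han$ does not go through as stated: the averaged measure $\mu=\int\mu_\omega\dd\Pbb(\omega)$ is generally not a Gibbs measure, and the annealed partition sum $\sum_C\mu(C)^2 = \sum_C\bigl(\int\mu_\omega(C)\dd\Pbb(\omega)\bigr)^2$ is the square of an average, which does not directly inherit the submultiplicativity used for Gibbs partition sums. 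It can be repaired by rewriting $\mu(C)^2=\iint\mu_\omega(C)\mu_{\omega'}(C)\dd\Pbb(\omega)\dd\Pbb(\omega')$ and treating this as a quenched-type sum over the product system $(\Omega\times\Omega,\Pbb\otimes\Pbb,\theta\times\theta)$, but this needs to be said. That said, your instinct to flag the existence of $\Han$ and $\Hqu$ as the delicate point is well founded: the paper's own proof of Proposition~\ref{prop:ruelle-expanding-semigroup} is entirely silent on this hypothesis of Theorem~\ref{thm:main}, and you have correctly identified the point where the argument as written is incomplete.
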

\begin{proof}
Observe that it follows from~\eqref{eq:definition-of-mu_w-through-quotients}
that  $\omega   \mapsto \mu_\omega$ is Lipschitz continuous with respect to
$d_t$, for $t \coloneqq \max \{s,e^{-a}  \}$. For ease of notation, set
$\mathcal{L}_{\omega_n \dotsm \omega_0}  \coloneqq \mathcal{L}_{\omega_n}
\dotsm \mathcal{L}_{\omega_0}$. Fix $x_0\in X$. For $f,g : X \to \mathbb{R}$
Lipschitz continuous and $k> 0$, it follows
from~\eqref{eq:definition-of-mu_w-through-quotients} that
\begin{align*}
\int f g \circ T^k_\omega & \dd\mu_\omega - \int f \dd\mu_\omega  \int g \dd\mu_{\theta^k\omega} \\
     & = \lim_{n \to \infty}   \frac{ \mathcal{L}_{\omega_n\ldots \omega_0}(f g \circ T^k_\omega \mathcal{L}_{\omega_{-1} \ldots \omega_{-n}}(\mathbf{1}))(x_0)}
       { \mathcal{L}_{\omega_n \ldots \omega_{-n}}(\mathbf{1})(x_0)}  -  \int f \dd\mu_\omega  \int g \dd\mu_{\theta^k\omega} \\
     & =  \lim_{n \to \infty}
      \frac{ \mathcal{L}_{\omega_n \dotsm \omega_k} \left( g  \mathcal{L}_{\omega_{k-1} \dotsm  \omega_{-n}}(\mathbf{1}) \left(
      \frac{\mathcal{L}_{\omega_{k-1} \dotsm \omega_0}(f  \mathcal{L}_{\omega_{-1} \dotsm \omega_{-n}}(\mathbf{1}))}{\mathcal{L}_{\omega_{k-1} \dotsm  \omega_{-n}}(\mathbf{1}) }  - \int f\dd\mu_\omega  \right)\right)(x_0)}
        {\mathcal{L}_{\omega_n \dotsm  \omega_{-n}} (\mathbf{1})(x_0)}  \\
      &\leq  C e^{-a (k-1)}  \cLip(f) \int \abs{g}  \dd\mu_{\theta^k\omega} \leq C e^{-a (k-1)} \normLip{f} \normLip{g}
\end{align*}
as $\normLip{ \, \cdot \, } = \norm{\cdot}_\infty + \cLip(\, \cdot \,)$.
Furthermore, by considering $f = \mathbf{1}$, it follows from the above
calculation that $\mu_{\omega} \circ T_\omega^{-1} = \mu_{\theta \omega}$.

The fact that $\mathbb{P}$ is exponential 4-mixing is standard. Let us
nevertheless explain the proof quickly, using the non-invertible, canonical
factor $(\Omega_+,\theta_+)$ of $(\Omega,\theta)$, where $\Omega_+ \coloneqq
\{ (\omega_i : i = 0, 1, \ldots): (\omega_i) \in \Omega \}$, $\theta_+$ is
the one-sided shift, $d_t^+$ is the usual shift metric with respect to the
parameter $t$ and $\pi: \Omega \to \Omega_+$ the canonical projection. It is
now crucial to recall some results
from~\cite{Bowen--Equilibrium-States-And-The--1975}. Firstly, we may assume
without loss of generality that $\psi = \psi_+ \circ \pi$ for a Lipschitz
function $\psi_+:\Omega_+\to \mathbb{R}$. The regularity of $\psi_+$ then
implies that the operator defined by
\[
P(f)(\omega) \coloneqq \sum_{\theta_+\tilde \omega = \omega} e^{\psi_+
(\tilde \omega)} f(\tilde \omega)\] acts on the space of Lipschitz continuous
functions on $\Omega_+$. Secondly, by adding a coboundary we may then assume
that $P(\mathbf{1}) = \mathbf{1}$ and  $\normLip{P^n(f) - \int f
\dd\mathbb{P}_+} \ll \lambda^n \cLip(f)$ for  $\mathbb{P}_+ \coloneqq
\mathbb{P}\circ \pi^{-1}$ and some $\lambda \in (0,1)$.

Now fix $m \in \mathbb{N}$ and choose for a given function $f : \Omega \to
\R$ a function $f^\ast: \Omega \to \R$ which on each ball of radius $t^{m+1}$
is constant and equal to some value of $f$ there. If $f$ is Lipschitz
continuous, then $\norm{f - f^\ast}_\infty \leq \cLip(f) t^m$ and there
exists $f^+:\Omega_+ \to \mathbb{R}$ such that $f^+ \circ \pi = f^\ast \circ
\theta^m$ and $\cLip(f^+) \leq t^{-m-1}\cLip(f)$.

Now assume that $f_1,f_2,g_1,g_2$ are Lipschitz continuous and that $0 \leq a
\leq b \leq c$, with $b-a \geq n \geq 0$ . Letting $m=\epsilon n$ for some
positive $\epsilon$ to be fixed below, we choose functions
$f^\ast_1,f^\ast_2,g^\ast_1,g^\ast_2$ as above. To check the $4$-mixing
property~\eqref{eq:4mixing}, we may replace $f_i$ and $g_i$ with $f_i^*$ and
$g_i^*$ respectively, as this introduces an exponentially small error.

Then the quantity to be estimated is
\begin{align*}
  &\abs*{\int (f^\ast_1    f^\ast_2\circ \theta^a   g^\ast_1\circ \theta^b   g^\ast_2 \circ \theta^c) \circ \theta^{m}  \dd\Pbb
       -  {\int f^\ast_1   f^\ast_2 \circ \theta^a \dd\Pbb}  {\int g^\ast_1   g^\ast_2 \circ \theta^{c-b}\dd\Pbb}} \\
  &= \abs*{\int  f^+_1    f^+_2\circ \theta_+^{a}   g^+_1 \circ \theta_+^b   g^+_2 \circ \theta_+^c  \dd\Pbb_+
       -  {\int f^+_1   f^+_2 \circ \theta_+^a \dd\Pbb_+}  {\int g^+_1   g^+_2 \circ \theta_+^{c-b}\dd\Pbb_+}} \\
  &= \abs*{\int   P^{b-a}(f^+_2 P^a(f^+_1))  g^+_1    g^+_2 \circ \theta_+^{c-b}  \dd\Pbb_+ - {\int f_2^+ P^a(f^+_1) \dd\Pbb_+} {\int g^+_1   g^+_2 \circ \theta_+^{c-b}\dd\Pbb_+}} \\
  & \ll \lambda^{b-a} \cLip(f_2^+ P^a(f^+_1)) \int \abs{g^+_1   g^+_2 \circ \theta_+^{c-b}} \dd\Pbb_+  \\
  & \ll \lambda^{b-a} t^{-2m} \normLip{f_1} \normLip{f_2} \norm{g_1}_\infty \norm{g_2}_\infty.
\end{align*}
The prefactor is bounded by $\lambda^n t^{-2\epsilon n}$. If $\epsilon$ is
small enough, it is exponentially small as desired.
\end{proof}

\begin{rmk}
If $T_1, \dotsc, T_k$ are Ruelle expanding maps defined on a connected and
compact Riemannian manifold $X$, then the semigroup generated by these maps
is always jointly mixing
(see~\cite[Prop.~3.3]{Stadlbauer-Varandas-Zhang--Quenched-And-Annealed-Equilibrium--2020}).
Furthermore, as Riemannian manifolds are always of bounded local complexity,
the conclusions of Proposition~\ref{prop:ruelle-expanding-semigroup} hold
without these two hypotheses.
\end{rmk}

%

\subsection{Non-uniformly expanding local diffeomorphisms}
\label{subsec:Non-uniformly expanding local diffeomorphisms} We now give an
example in which the fiber maps are nonuniformly expanding: contrary to
Paragraph~\ref{subsec:Finitely many Ruelle expanding maps}, there may be some
region where the maps are contracting, but the expansion is still winning on
average. We adapt the setting
in~\cite{Stadlbauer-Suzuki-Varandas--Thermodynamic-Formalism-For-Random--CMP2021}.
In contrast to the situation in there, we have to assume that the base
transformation has the following mixing property.
\begin{enumerate}
 \item[(H0)] Assume that $(\Omega,d)$ is a compact metric space, that
     $\theta: \Omega \to \Omega$ is a bi-Lipschitz homeomorphism and that
     $\Pbb$ is a $\theta$-invariant probability measure with stretched
     exponential 4-mixing.
\end{enumerate}
Furthermore, let $X$ be a compact connected Riemannian manifold and let
$\{T_\omega\}_{\omega \in \Omega}$ be a family of $C^1$-maps on $X$ with the
following properties.
\begin{enumerate}
\item[(H1)] For each $\omega$, the map $T_\omega$ is a surjective, local
    diffeomorphism.
\item[(H2)] There exists $\delta>0$ such that for every $(\omega,x)\in X$,
    there exists an open neighborhood $U^\omega_x$ of $x$  with
    $T_\omega\restr{U^\omega_x}:U^\omega_x \to B(T_\omega(x), \delta)$
    invertible,
\item[(H3)] There exists $C> 0$ such that $\norm{DT_\omega(x)} \leq C$ for
    all $\omega \in \Omega$ and all $x\in X$.
\end{enumerate}
Observe that the connectedness  and compactness of $X$ imply that the degree
$\deg(T_\omega)$ of $T_\omega$, i.e., the number of preimages of $T_\omega$
is finite and constant for each $\omega$. Moreover, (H3) ensures that this
degree is uniformly bounded.

We also assume the following geometric conditions. There are  random
variables $\sigma_\omega > 1, L_\omega \geq 0$ and $0\leq p_\omega,
q_\omega<\deg(f_\omega) $ with $L\coloneqq \sup L_\omega < \infty$ such that
for each $\omega$,
\begin{enumerate}
\item[(H4)] there  exists a covering $\mathcal{P}_\omega = \{P^\omega_1,
    \dotsc,P^\omega_{p_\omega}, \dotsc, P^\omega_{p_\omega+q_\omega}\}$ of
    $X_\omega$ such that every $T_\omega\restr{P_i}$ is injective,
    $\norm{DT_\omega(x)^{-1}} \leq \sigma_\omega^{-1} < 1$ for each $x\in
    P^\omega_{1} \cup \dotsb \cup P^\omega_{p_\omega}$, and
    $\norm{DT_\omega(x)^{-1}}\leq L_\omega$ for every $x\in X$,
\item[(H5)]
\[\sup  \left\{ \log\frac{\sigma_\omega^{-1}p_\omega+L_\omega q_\omega}{\deg(f_\omega)} : \omega \in \Omega \right\}< 0 .\]
\end{enumerate}
Finally, assume that  $\{\phi_\omega\}$ is a family of real valued functions
in $C^1(X)$, referred to as potentials, such that $\sup \{
\norm{D\phi_\omega}_\infty :  \omega \in \Omega\} < \infty$ and for all
$\omega$
\begin{enumerate}
\item[(H6)] $\displaystyle  \sup_{x \in X_\omega}\phi_\omega-\inf_{x \in
    X_\omega} \phi_\omega  + \log \pare*{1+\norm{D\phi_\omega}_\infty \diam
    X}  < - \log \frac{\sigma_\omega^{-1}p_\omega+L_\omega
    q_\omega}{\deg(f_\omega)}.$
\end{enumerate}
We now motivate conditions (H4--6). (H4) states that regions of contraction
and expansion may coexist whereas (H5) implies that expansion dominates the
contraction. Finally, (H6) can be seen as an upper bound of the global and
local oscillation by the combinatorial expansion in (H4). However, in
contrast to the bounds in average
in~\cite{Stadlbauer-Suzuki-Varandas--Thermodynamic-Formalism-For-Random--CMP2021},
we have to ask for uniform bounds in (H5) and (H6) due to the uniform
hypothesis in Theorem~\ref{thm:main}.

Furthermore, in order to guarantee the continuous variation with respect to $\omega$, we assume that $T$ and $\varphi$ vary Lipschitz continuously with respect to $\omega$, that is we require that
\begin{enumerate}
\item[(H7)] there exists $C_T> 0$ with $d(T_\omega(x), T_{\omega'}(x)) \leq
    C_T d(\omega,\omega')$ for all $\omega,\omega' \in \Omega$ and all
    $x\in X$,
\item [(H8)] there exists $C_\varphi> 0$ with $\abs*{ \varphi_\omega(x) -
    \varphi_{\omega'}(x)} \leq C_\varphi d(\omega,\omega')$ for all
    $\omega,\omega' \in \Omega$ and all $x\in X$.
\end{enumerate}

The relevant measures $\{\mu_\omega\}$ are now given by application of
Theorem A
in~\cite{Stadlbauer-Suzuki-Varandas--Thermodynamic-Formalism-For-Random--CMP2021}:
denoting by $\mathcal{L}_\omega$ the transfer operator associated to
$T_\omega$ and $\phi_\omega$ as in~\eqref{eq:def_transfer_operator}, there
exist families of positive constants $\{\lambda_\omega\}$, of differentiable
functions $\{h_\omega\}$ and probability measures $\nu_\omega$ such that
$\mathcal{L}_\omega(h_\omega) = \lambda_\omega h_{\theta\omega}$ and
$\mathcal{L}_\omega^\ast(\nu_{\theta\omega}) = \lambda_\omega \nu_\omega$.
Furthermore, for $\dd\mu_\omega \coloneqq h_\omega \dd\nu_\omega$, it follows
that $\mu_\omega = \mu_{\theta \omega} \circ T_\omega^{-1}$ and that a
fibered exponential decay of correlation for $C^1$-observables holds
(Corollary~1
in~\cite{Stadlbauer-Suzuki-Varandas--Thermodynamic-Formalism-For-Random--CMP2021}).
Finally, we would like to remark that the standard examples for this class
are random Manneville-Pomeau maps or sufficiently random perturbations of
non-uniformly expanding maps with respect to a potential sufficiently close
to zero (see Examples~3.1 and~3.2
in~\cite{Stadlbauer-Suzuki-Varandas--Thermodynamic-Formalism-For-Random--CMP2021}
for more details).

\begin{prop}
  \label{prop:random-local-diffeo}
Assume that (H0--H8) hold. Then there exists $\alpha>0$ such that the
assumptions of Theorem~\ref{thm:main} hold with respect to $\Pbb$, the metric
$d_\alpha$ defined by $d_\alpha(\omega,\omega') \coloneqq
d(\omega,\omega')^\alpha$ on $\Omega$ and $\{\mu_\omega : \omega \in
\Omega\}$ as defined above.
\end{prop}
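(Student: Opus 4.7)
The plan is to verify each hypothesis of Theorem~\ref{thm:main} in turn, with $\alpha>0$ chosen small enough. Bounded local complexity of $X$ is automatic since $X$ is a compact Riemannian manifold. The existence of the family $\{\mu_\omega\}$, together with the equivariance $(T_\omega)_\ast \mu_\omega = \mu_{\theta\omega}$, comes from Theorem~A of~\cite{Stadlbauer-Suzuki-Varandas--Thermodynamic-Formalism-For-Random--CMP2021}, and the fiberwise stretched exponential mixing of Definition~\ref{def:mixing_fiber} is exactly their Corollary~1 applied to $C^1$ observables (which include Lipschitz ones on the Riemannian manifold $X$).

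That $S$ is Lipschitz on $(\Omega \times X, d_\alpha \times d)$ follows from $\cLip(T_\omega) \le C$ uniformly in $\omega$ (by (H3)), from $d(T_\omega x, T_{\omega'} x) \le C_T d(\omega,\omega')$ (by (H7)), and from the inequality $d \le \mathrm{diam}(\Omega)^{1-\alpha} d_\alpha$ available on the compact space $\Omega$; the bi-Lipschitzness of $\theta$ in $d$ assumed in (H0) transfers to $d_\alpha$ with constant raised to the power $\alpha$. For the stretched exponential $4$-mixing of $\theta$ with respect to $d_\alpha$-Lipschitz test functions (Definition~\ref{def:4mixing_base}), note that a $d_\alpha$-Lipschitz function is $\alpha$-Hölder in $d$; the $4$-mixing assumed in (H0) for $d$-Lipschitz functions extends to this Hölder class by the standard convolution/approximation argument (smoothing at scale $\epsilon$, applying Lipschitz mixing, and optimizing in $\epsilon$), at the cost of a possibly smaller stretched exponent.

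The main obstacle, and the substantive step, is the Lipschitz dependence $\omega \mapsto \mu_\omega$ of Definition~\ref{def:lipschitz_fibers} in the metric $d_\alpha$. The plan is to show that $\omega \mapsto \mu_\omega$ is $\beta$-Hölder in $d$ for some $\beta > 0$ and then to fix $\alpha \le \beta$. Recall $\dd\mu_\omega = h_\omega \dd\nu_\omega$, where $(h_\omega, \nu_\omega, \lambda_\omega)$ are produced from the transfer operators $\mathcal{L}_\omega$ by the arguments of~\cite{Stadlbauer-Suzuki-Varandas--Thermodynamic-Formalism-For-Random--CMP2021}. Hypotheses (H7) and (H8), together with the uniform bounds $\|DT_\omega\| \le C$ and $\sup_\omega \|D\phi_\omega\|_\infty < \infty$, imply that $\omega \mapsto \mathcal{L}_\omega$ varies Lipschitz continuously as an operator on a suitable space of Lipschitz functions on $X$. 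The uniform versions of (H5)--(H6) yield, via the cone-contraction arguments of the cited reference, a uniform spectral gap for the cocycle of transfer operators, identifying $h_\omega$ and $\nu_\omega$ as the exponentially attracting fixed objects. A telescoping argument along the orbit of $\omega$, analogous to the one used to derive~\eqref{eq:definition-of-mu_w-through-quotients} in the Ruelle-expanding case, then converts Lipschitz dependence of the operators into Hölder dependence of the invariant objects, with exponent $\beta$ dictated by the contraction rate. The whole point of strengthening (H5)--(H8) to uniform (rather than averaged) bounds is precisely to make these estimates uniform in $\omega$.
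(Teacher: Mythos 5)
The overall plan — verify each hypothesis, reduce the crux to Hölder dependence of $\omega\mapsto\mu_\omega$, and absorb the exponent into the metric $d_\alpha$ — matches the paper's strategy, and your treatment of the Lipschitzness of $S$ and the extension of $4$-mixing from Lipschitz to Hölder test functions is essentially right. However, there is a genuine error in the way you dispose of the fiberwise mixing condition. You write that Corollary~1 of the cited reference applies to $C^1$ observables ``which include Lipschitz ones on the Riemannian manifold $X$.'' That inclusion is backwards: $C^1$ functions with bounded derivative form a proper subclass of Lipschitz functions (e.g.\ a distance-to-a-point function is Lipschitz but not $C^1$). Definition~\ref{def:mixing_fiber} requires uniform decay of correlations for \emph{all} Lipschitz observables, and the cited result only furnishes it for $C^1$ ones, so the hypothesis is not verified and a real argument is needed. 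The paper handles this by reconstructing $\nu_\omega$ and $h_\omega$ directly from the cone contraction, showing that $\mathcal{L}_\omega^n(f)/\mathcal{L}_\omega^n(\mathbf 1)$ converges in sup norm at rate $\vartheta^n\cLip(f)$, and then extending the domain of $\nu_\omega$ from $C^1$ to Lipschitz functions via mollifiers; this is what upgrades the mixing estimate from $C^1$ to Lipschitz test functions. Passing off the extension as a tautological inclusion is the gap.

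A second, closely related point: the estimates of the cited reference come with $\omega$-dependent, only almost-surely-finite constants (e.g.\ the cone aperture $\kappa_\omega$), whereas Definitions~\ref{def:lipschitz_fibers} and~\ref{def:mixing_fiber} demand constants $C_1$, $C_3$, $c_3$ that are \emph{uniform in $\omega$}. You correctly observe that the uniform versions of (H5)--(H6) are there to make things uniform, but you then cite Theorem~A / Corollary~1 as if they directly deliver the uniform statements; they do not, and one must go back into the cone construction (as the paper does, invoking Formula~5.2 of the reference to get $\sup_\omega\kappa_\omega<\infty$) to obtain uniform contraction and hence uniform mixing rates. Finally, your ``telescoping argument along the orbit of $\omega$'' for the Hölder dependence is the right idea in outline, but the substantive content — matching preimages of $T_\omega^n$ and $T_{\omega'}^n$ when $A^n d(\omega,\omega')<1$ and controlling the Birkhoff sums of the potentials, i.e.\ the content of Lemma~\ref{lemma:continuity-of-Ln} — is entirely elided, so the proposal does not yet establish the Hölder exponent $\alpha$ it promises.
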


\begin{proof} Before starting with the proof, we remark that we will write $a \ll b$ whenever there exists  $C> 0$, depending exclusively on (H0--H8), such that  $   a \leq  C b$.

In order to verify the assumptions of Theorem~\ref{thm:main}, we have to
modify the construction
in~\cite{Stadlbauer-Suzuki-Varandas--Thermodynamic-Formalism-For-Random--CMP2021}
slightly. In order to do so, recall that it is shown there that there exists
a positive and almost surely finite random variable $\kappa_\omega$ such that
the family of cones
\begin{equation} \label{eq:differentiable-cones}
\Lambda_{\omega}
	\coloneqq \left\{g_\omega\in C^1(X) : g_\omega>0 \text{ and }  \norm{Dg_\omega}_\infty \leq \kappa_\omega \inf_{x \in X} g_\omega(x) \right\}
\end{equation}
satisfies $\mathcal{L}_\omega(\Lambda_{\omega}) \subset
\Lambda_{\theta\omega}$ a.s. However, as (H6) provides a  uniform bound, it
follows from the construction (see Formula 5.2
in~\cite{Stadlbauer-Suzuki-Varandas--Thermodynamic-Formalism-For-Random--CMP2021})
that $\kappa_\omega$ is in fact uniformly bounded. Hence
(see~\cite[Claim~1]{Stadlbauer-Suzuki-Varandas--Thermodynamic-Formalism-For-Random--CMP2021}),
there exist $C >0$ and $\vartheta \in (0,1)$ such that,
 for every $m,n \geq 1$ and every $\varphi \in\Lambda_{\theta^{-(n+m)}(\omega)}$ and  $\psi\in\Lambda_{\theta^{-n}\omega}$,
\begin{equation}\label{eq:geometric-cauchy-sequence}
\Theta_{\omega} \pare*{\mathcal{L}_{\theta^{-(n+m)}\omega}^{n+m}
\varphi, \mathcal{L}_{\theta^{-n}\omega}^{n}\psi} \leq C  \, \vartheta^n \; \Theta_{\theta^{-n}\omega} \pare*{\mathcal{L}_{\theta^{-(m+n)}\omega}^{m}\varphi, \psi} .
\end{equation}
In here, $\Theta_{\omega}$ refers to the Hilbert metric on  $\Lambda_\omega$
and $\mathcal{L}_\omega^n$ stands for $\mathcal{L}_{\theta^{n-1}\omega}
\dotsm \mathcal{L}_\omega$ which we now analyze briefly. Firstly, by dividing
the defining relation by $g_\omega$  in~\eqref{eq:differentiable-cones}, one
obtains that $\norm{D \log g_\omega}_\infty \leq \kappa_\omega$ and, in
particular, that $q_\omega(x)/q_\omega(y)\leq \kappa_\omega \diam(X)$ for all
$x,y \in X$. Secondly, if $\Theta_\omega(f,g) < \epsilon$ for some
$\epsilon>0$, then there are $t'\geq t > 0$ with $\log t' - \log t <
\epsilon$ such that $0 \leq \kappa_\omega  \inf (f - tg)$ and $0 \leq
\kappa_\omega \inf (t'g  - f)$. Hence,  $t \leq f/g \leq t' $ and  $0 \leq
f/g - t <  \epsilon$.


\subsubsection*{Construction of $\nu_\omega$}
Assume that $f \in \Lambda_\omega$. Then~\eqref{eq:geometric-cauchy-sequence}
implies, for $\epsilon_n\coloneqq C  \vartheta^n \Theta_{\omega}(f,
\mathbf{1})$, that
$\Theta_{\theta^n\omega}(\mathcal{L}_\omega^n(f),\mathcal{L}_\omega^n(\mathbf{1}))
\leq \epsilon_n$. With $r_n \coloneqq \inf_x
\mathcal{L}_\omega^n(f)(x)/\mathcal{L}_\omega^n(\mathbf{1})(x)$, one then
obtains from the above that $r_n \leq
{\mathcal{L}_\omega^n(f)}/{\mathcal{L}_\omega^n(\mathbf{1})} \leq r_n
e^{\epsilon_n}$. Hence, as
\[
r_{m+n} \leq
\frac{\mathcal{L}_\omega^m(\mathcal{L}_{\theta^m\omega}^n(f))}{\mathcal{L}_\omega^{m+n}(\mathbf{1})}
\leq r_n e^{\epsilon_n} \hbox{ and } r_{m+n}e^{\epsilon_{m+n}} \geq
\frac{\mathcal{L}_\omega^m(\mathcal{L}_{\theta^m\omega}^n(f))}{\mathcal{L}_\omega^{m+n}(\mathbf{1})}
\geq r_n,\] $(\log r_n)$ is a Cauchy sequence and, in particular, $\lim_n r_n
\in (0,\infty)$. Moreover, as $\Theta_{\omega}(f, \mathbf{1})$ is uniformly
bounded, the function
${\mathcal{L}_\omega^n(f)}/{\mathcal{L}_\omega^n(\mathbf{1})}$ converges
uniformly to a constant $\nu_\omega(f)$. Moreover, this constant satisfies
\begin{equation} \label{eq:log-exp-decay-of measures}
 \norm*{\log \nu_\omega(f) - \log \frac{\mathcal{L}_\omega^n(f)}{\mathcal{L}_\omega^n(\mathbf{1})}}_\infty \ll \vartheta^n \;\;  \forall f \in \Lambda_\omega, n \in \N, \omega \in \Omega.
\end{equation}
We now extend the domain of $\nu_\omega$ to Lipschitz functions, using the
following standard fact: There exists $C>0$ such that for any $\epsilon > 0$
and Lipschitz function $f : X \to \R$, there is $f^\ast \in C^1(X)$ with
$\norm{f - f^\ast}_\infty \leq \epsilon$ and $\norm{Df^\ast} \leq C
\cLip(f)$. Let us recall how this fact is proved, using \emph{mollifiers} as
follows. Assume that $\ell:  \R^d \to [0,\infty)$ is a function in $C^1$ such
that $\ell$ is supported on $\{x : \norm{x}\leq 1 \}$ and $\int \ell \dd x
=1$. For $\ell_\epsilon(x) \coloneqq \epsilon^{-d} \ell(x/\epsilon)$, the
convolution $g \ast \ell_\epsilon$ with a Lipschitz function $g$ is in
$C^1(X)$ and $\norm{g -  g \ast \ell_\epsilon}_\infty \leq 2\cLip(g)
\epsilon$. If, in addition, $\ell(x) = j(\norm{x})$ for some $j:[0,1] \to \R$
(i.e., $\ell$ is constant on spheres), a straightforward calculation shows
that $\norm{D(g \ast \ell_\epsilon)} \ll \cLip(g)$ where the implicit
constant in `$\ll$' only depends on $\ell$. Finally, by employing an argument
based on a partition of unity, one obtains that the same holds for the
Riemannian manifold $X$.

In order to employ~\eqref{eq:log-exp-decay-of measures}, note that $f^\ast +
c \in \Lambda_ \omega$ for $c =  \norm{Df^\ast}_\infty/\kappa_\omega - \inf
f^\ast$. Hence,
\begin{align*}
  \norm*{\nu_\omega(f^\ast) -  \frac{\mathcal{L}_\omega^n(f)}{\mathcal{L}_\omega^n(\mathbf{1})}}_\infty  -  \epsilon  &
  \leq   \norm*{\nu_\omega(f^\ast+c) -  \frac{\mathcal{L}_\omega^n(f^\ast+c)}{\mathcal{L}_\omega^n(\mathbf{1})}}_\infty  \\
 &\ll  \vartheta^n \nu_\omega(f^\ast+c)   \ll  \vartheta^n \pare*{\norm{Df^\ast}_\infty/\kappa_\omega +  \norm{f^\ast - \inf f^\ast}_\infty}\\
 &\ll  \vartheta^n \cLip(f) \pare*{\kappa_\omega^{-1} + \diam (X)} \ll
  \vartheta^n \cLip(f).
 \end{align*}
As $\epsilon>0$ is arbitrary, the definition of $\nu_\omega$ extends to Lipschitz continuous functions and
\begin{equation} \label{eq:exp-decay-of-conformal-measures}
\norm*{ {\mathcal{L}_\omega^n(f)}/{\mathcal{L}_\omega^n(\mathbf{1})} - \nu_\omega(f)    }_\infty \ll \vartheta^n \cLip(f).
\end{equation}

\subsubsection*{Construction of $h_\omega$}
Let $\lambda_{\theta^{-n}\omega}^n \coloneqq
\nu_{\omega}(\mathcal{L}_{\theta^{-n}\omega}^n (\mathbf{1}))$ and
$h_\omega^n\coloneqq \mathcal{L}_{\theta^{-n}\omega}^n
(\mathbf{1})/\lambda_{\theta^{-n}\omega}^n$. As $\Theta_\omega$ is a
projective metric,  \eqref{eq:geometric-cauchy-sequence} implies that
$(\mathcal{L}_{\theta^{-n}\omega}^n (\mathbf{1}))$ and $(h_n)$ are Cauchy
sequences with respect to $\Theta_\omega$. Hence, for $r_{n,k} \coloneqq
\inf_x h^{n+k}_\omega(x) /  h^n_\omega(x)$, it follows in analogy to the
construction of $\nu_\omega$ that
\[
r_{n,k} \leq {h^{n+k}_\omega}/{h^n_\omega} \leq r_{n,k} e^{\epsilon_n}.
\]
Moreover, by multiplying with $h^n_\omega$ and integrating with respect to
$\nu_\omega$, it follows that $ \abs{\log r_{n,k}}\leq \epsilon_n$. Hence,
$h_\omega = \lim h^n_\omega$ exists and satisfies
\begin{equation} \label{eq:exp-convergence-to-h}
\norm*{\log h^n_\omega - \log h_\omega}_\infty \ll \vartheta^n.
\end{equation}

\subsubsection*{Exponential mixing}
We now show that $\dd\mu_\omega = h_\omega \dd\nu_\omega$ mixes exponentially
along fibers with respect to Lipschitz functions
(in~\cite{Stadlbauer-Suzuki-Varandas--Thermodynamic-Formalism-For-Random--CMP2021},
it is only shown for functions in $C^1(X)$). However, it follows
from~\eqref{eq:exp-decay-of-conformal-measures}
and~\eqref{eq:exp-convergence-to-h} that, for $f$ Lipschitz and some uniform
$C> 0$,
\begin{align*}
 \norm*{\log \nu_\omega(f h_\omega) -
 \log \frac{\mathcal{L}_\omega^n(f h_\omega)}{\lambda^n_\omega h_{\theta^n\omega}} }_\infty
 &\leq  \norm*{\log \nu_\omega(f h_\omega) -
 \log \frac{\mathcal{L}_\omega^n(f h_\omega)}{\mathcal{L}_\omega^n(\mathbf{1})}}_\infty  +
\norm*{\log \frac{ \lambda^n_\omega h_{\theta^n\omega} }{\mathcal{L}_\omega^n(\mathbf{1})} }_\infty \\
&\leq  C \vartheta^n +
\norm*{\log h_{\theta^n\omega} - {\mathcal{L}_\omega^n(\mathbf{1})}/{ \lambda^n_\omega } }_\infty  \ll \vartheta^n.
\end{align*}
 Observe that $\tilde{\mathcal{L}}_\omega: f \mapsto  \mathcal{L}_\omega^n(f h_\omega) / (\lambda^n_\omega h_{\theta^n\omega})$ is the transfer operator
 of $T_\omega$ with respect to $\mu_\omega$. Hence, as $\cLip(h_\omega)$ is uniformly bounded, it follows that   $ \norm{\tilde{\mathcal{L}}^n_\omega(f) - \mu_\omega(f)}_\infty \ll \vartheta^n \cLip(f)$. The exponential mixing along fibers follows from this.

\subsubsection*{H\"older continuity of $\mu_\omega$}
Note that Kantorovich's duality for the Wasserstein metric $W$ on probability
measures implies that the condition in Definition~\ref{def:lipschitz_fibers}
is equivalent to Lipschitz continuity with respect to $W$. So assume that $f$
is a Lipschitz function on $X$ with  $\cLip(f)\leq 1$ and $\inf_{x \in
X_\omega} f(x) =0$. Then $\norm*{f}_\infty \leq \cLip(f) \diam(X) $ and
\begin{align*}
 \int f \dd (\mu_\omega - \mu_{\omega'})
 & =   \int f (h_\omega - h_{\omega'}) \dd  \nu_\omega  + \int f h_{\omega'} \dd  (\nu_\omega  -  \nu_{\omega'}) \\
 & \leq  \norm*{ h_\omega - h_{\omega'}}_\infty \int f \dd\nu_\omega + \cLip(fh_{\omega'}) W(\nu_\omega,\nu_{\omega'}) \\
 & \leq  \diam(X) \norm*{ h_\omega - h_{\omega'}}_\infty +
\pare*{\cLip(h_{\omega'}) \diam(X) + \norm*{ h_{\omega'}}_\infty} W(\nu_\omega,\nu_{\omega'}) \\
&\ll     \norm*{ h_\omega - h_{\omega'}}_\infty + \normLip*{h_{\omega'}} W(\nu_\omega,\nu_{\omega'}).
\end{align*}
As $h_{\omega'} \in \Lambda_{\omega'}$ and $\int h_{\omega'} \dd
\nu_{\omega'} =1$, the Lipschitz norm $\normLip*{h_\omega}$ is uniformly
bounded. So it remains to show that $\omega \to h_\omega$ is H\"older with
respect to the sup-norm and that $\nu_\omega$ is H\"older  with respect to $W$,
which essentially is a corollary of the above and the following estimate.

We recall that constants $\delta$, $L$ and $C_T$ have been introduced
respectively in (H2), (H4) and (H7).

\begin{lem}
\label{lemma:continuity-of-Ln} There exists $A\geq 1$ such that for $n \in \N$,  $\omega, \omega' \in
\Omega$ with $A^n d(\omega,  \omega') < 1$ (or
$d(\theta^n\omega,  \theta^n\omega') A^n <1$, respectively) and
$f \in \Lambda_\omega \cap  \Lambda_{\omega'}$,
 \[  \norm*{ \log \mathcal{L}_\omega^n(f) - \log \mathcal{L}_{\omega'}^n(f)}_\infty
\ll \begin{cases}
        A^n d(\omega,\omega') & \quad \text{if } d(\omega,  \omega') A^n < 1, \\
        A^n d(\theta^n\omega,\theta^n\omega') &\quad\text{if }  d(\theta^n\omega,  \theta^n\omega') A^n < 1.
    \end{cases}
 \]
\end{lem}
\begin{proof}
The estimate relies on the control of the distances between the preimages of
$T^k_{\omega}$ and $T^k_{\omega'}$, where $k = 1, \ldots, n$ and $\omega$ and $\omega'$ are sufficiently close.

So assume that $x,y,y' \in X$ and $\omega,\omega'\in \Omega$ with
$T_\omega(x)=y$,  $d(y,y')<\delta$,  and $d(\omega,\omega') < C_T^{-1}
\delta$. As $d(y,y')<\delta$, $z:= (T_{\omega}\restr{U^{\omega}_x})^{-1}(y')$
is well defined and $d(x,z) \le L d(y,y')$ by (H4). It now follows from (H7)
that $d(T_{\omega}(z),T_{\omega'}(z))< \delta$. Hence, $x' :=
(T_{\omega'}\restr{U^{\omega'}_z})^{-1}(y')$ is also well defined and
\begin{equation}
\label{eq:Lipschitz-constants-for-inverses-v3}
\begin{split}
d(x,x') & \leq d(x, z) + d(z, x')
\leq L d(T_\omega x, T_\omega z) + M d(T_{\omega'} z, T_{\omega'}x')
\\& 
= L d(y,y') + L d(T_{\omega'}(z), T_\omega z) \leq  L d(y,y') + LC_T d(\omega',\omega).
\end{split}
\end{equation}
Note that (H3) implies that $T_{\omega'}\restr{B(x,C^{-1} \delta)}$ is injective. In particular,
there is at most one $x'\in X$ with $T_{\omega'}(x') =y'$ and $d(x,x') < C^{-1}\delta$. Hence, if $d(x,x') < C^{-1}\delta$, then  $x'$ is unique.

Now assume that $n \in \N$ and $x,y\in X$ with $T^n_\omega(x)=y$ are given. By iterating the above construction of preimages, one then obtains a unique $x'\in X$ with $T^n_{\omega'}(x')=y$ whose $\omega'$-orbit stays close to the $\omega$-orbit of $x$ until time $n$, provided that $d(\omega,\omega')$ is sufficiently small. Namely, it easily follows by induction that this happens whenever $d(\theta^k\omega, \theta^k\omega') < (CC_T)^{-1} \delta (2L )^{-k-1}$ for $k = 0, \ldots n-1$. As $d(\theta^k\omega, \theta^k\omega') \leq \cLip(\theta)^k d(\omega,\omega')$, there exists $A\geq 1$ such that $\omega$ and $\omega'$ are sufficiently close if $ d(\omega,\omega') <  A^{-n}$.

Furthermore, if $ d(\omega,\omega') <  A^{-n}$, it follows from (H8), (H3) and again by induction from \eqref{eq:Lipschitz-constants-for-inverses-v3} and eventually enlarging $A$ that
\begin{equation}
\label{eq:contiunuity-of-Birkhoff-sums-wrt-omega}
\begin{split}
      & \abs*{ \sum_{k=0}^{n-1}  \varphi_{\theta^k \omega} (T^k_\omega(x)) - \varphi_{\theta^k \omega'} (T^k_{\omega'}(x'))}
  \leq \sum_{k=0}^{n-1}  C_\varphi d(\theta^k\omega, \theta^k\omega') +  \sum_{k=0}^{n-1} Cd(T^k_{\omega}(x),T^k_{\omega'}(x')) \\
  \leq & C_\varphi \sum_{k=0}^{n-1} \cLip(\theta)^k d(\omega, \omega')+  CC_T \sum_{k=0}^{n-1} \sum_{\ell = k}^{n-1} L^{n-\ell} d(\theta^\ell\omega, \theta^\ell \omega') \\
  \ll  & A^n d(\omega, \omega').
\end{split}
\end{equation}
As there is a one-to-one relation between the preimages of $T^n_\omega$ and $T^n_{\omega'}$,
it follows from \eqref{eq:contiunuity-of-Birkhoff-sums-wrt-omega} by a standard argument, for $f \in \Lambda_{\omega} \cap
\Lambda_{\omega'}$, that
\begin{align*}
\abs*{\mathcal{L}_\omega^n(f)(x) - \mathcal{L}_{\omega'}^n(f)(x)}
& \ll  A^n d(\omega,\omega') \pare*{ \mathcal{L}_\omega^n(f)(x) +   \cLip(\log f) \mathcal{L}_{\omega'}^n(f)(x) } \\
&\ll A^n d(\omega,\omega') \pare*{ \mathcal{L}_\omega^n(f)(x) +    \mathcal{L}_{\omega'}^n(f)(x) },
\end{align*}
where we have used that $\cLip(\log f)$ is uniformly bounded as $f \in
\Lambda_{\omega'}$. Furthermore, observe
that~\eqref{eq:contiunuity-of-Birkhoff-sums-wrt-omega}, $d(\omega,  \omega')
A^n <  1$ and the uniform bound on $\cLip(\log f)$ imply that
$\mathcal{L}_\omega^n(f)(x) \ll \mathcal{L}_{\omega'}^n(f)(x) \ll
\mathcal{L}_\omega^n(f)(x)$. Hence, the first assertion of the lemma follows
by dividing the last estimate by  $\mathcal{L}_\omega^n(f)(x)$.
The second part of Lemma~\ref{lemma:continuity-of-Ln} follows by
precisely the same arguments.
\end{proof}

By combining Lemma~\ref{lemma:continuity-of-Ln} with
estimate~\eqref{eq:log-exp-decay-of measures}, one obtains  for $\omega,
\omega'$ sufficiently close, $x\in X$ and $f \in \Lambda_\omega \cap
\Lambda_{\omega'}$ and $\nu^n_\omega(f) \coloneqq
{\mathcal{L}_\omega^n(f)(x)}/{\mathcal{L}_\omega^n(\mathbf{1})(x)}$ that
\begin{equation}
\label{eq:log-estimate-for-nu_omega-wrt-to-the cone}
\begin{split}
\abs{\log \nu_\omega(f) - \log \nu_{\omega'}(f)} &\leq
\abs{\log \nu_\omega(f) - \log \nu^n_{\omega}(f)} + \abs{\log \mathcal{L}_\omega^n(f)(x) - \log  \mathcal{L}_{\omega'}^n(f)(x)  } \\
& \quad + \abs{\log \mathcal{L}_\omega^n(\mathbf{1})(x) - \log  \mathcal{L}_{\omega'}^n(\mathbf{1})(x)  } +
\abs{\log \nu^n_{\omega'}(f) - \log \nu_{\omega'}(f)}\\
&\ll \vartheta^n + A^n d(\omega,  \omega').
\end{split}
\end{equation}
For $t$ given by $d(\omega,  \omega') = A^{-t}$, $n \coloneqq \lfloor t \log
A /(\log A - \log \vartheta) \rfloor$, it then follows that
\[
\abs{\log \nu_\omega(f) - \log \nu_{\omega'}(f)} \ll d(\omega,  \omega')^{\frac{- \log \vartheta}{\log A - \log \vartheta}} = d(\omega,  \omega')^\alpha,
\]
where  $\alpha \coloneqq {- \log \vartheta}/\pare{\log A - \log \vartheta}$.
By repeating the approximation argument
in~\eqref{eq:exp-decay-of-conformal-measures}, one obtains that  $
\abs{\nu_\omega(f) -   \nu_{\omega'}(f)} \ll \cLip(f) d(\omega,
\omega')^\alpha$ and by Kantorovich's duality that $W(\nu_\omega,
\nu_{\omega'}) \ll d(\omega,  \omega')^\alpha$.
With respect to $h_\omega$, estimate~\eqref{eq:exp-convergence-to-h},
Lemma~\ref{lemma:continuity-of-Ln},
estimate~\eqref{eq:log-estimate-for-nu_omega-wrt-to-the cone}, a further
application of Lemma~\ref{lemma:continuity-of-Ln}
and~\eqref{eq:exp-convergence-to-h} (in this order) imply that
\begin{align*}
   \abs{\log h_\omega(x)  - \log h_{\omega'}(x)}
 & \leq
    \abs*{\log \frac{h_\omega(x)}{h^n_{\omega}(x)}}
  + \abs*{\log \frac{\mathcal{L}_{\theta^{-n}\omega}^n(\mathbf{1})(x)}{  \mathcal{L}_{\theta^{-n}\omega'}^n(\mathbf{1})(x)}  }
  + \abs*{ \log \frac{\nu_\omega  (\mathcal{L}_{\theta^{-n}\omega}^n(\mathbf{1})) }{ \nu_{\omega'} (\mathcal{L}_{\theta^{-n}\omega}^n(\mathbf{1})) }} \\
&  \quad +
 \abs*{ \log \frac{\nu_{\omega'}  (\mathcal{L}_{\theta^{-n}\omega}^n(\mathbf{1})) }{ \nu_{\omega'} (\mathcal{L}_{\theta^{-n}\omega'}^n(\mathbf{1}))} } +     \abs*{\log \frac{h_{\omega'}(x)}{h^n_{\omega'}(x)}}    \\
 & \ll  \vartheta^n + A^n d(\omega,  \omega') + d(\omega,  \omega')^\alpha + A^n d(\omega,  \omega') +   \vartheta^n.
 \end{align*}
With respect to the same choice of $t$ and $n$ as above, it follows that
$\norm{h_\omega  - h_{\omega'}}_\infty \ll d(\omega,  \omega')^\alpha$. This
concludes the proof of Proposition~\ref{prop:random-local-diffeo}, except for
the fact that $\theta$ is exponentially $4$-mixing with respect to H\"older
functions, which is proved in the next lemma.
\end{proof}

\begin{lem}
\label{lem:extend_Lip_Hol} Let $\theta : \Omega \to \Omega$ be a map on a
compact metric space which is exponentially $4$-mixing for Lipschitz
functions. Then it is also exponentially $4$-mixing with respect to
H\"older-continuous functions of any given positive exponent.
\end{lem}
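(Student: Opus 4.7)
I would reduce the Hölder case to the Lipschitz case by a standard inf-convolution approximation. Fix $\alpha \in (0,1]$ (the case $\alpha \ge 1$ is already Lipschitz since $\Omega$ has finite diameter) and let $f : \Omega \to \R$ have $\alpha$-Hölder seminorm $H$. Define the Lipschitz approximation
\[
  f_K(x) \coloneqq \inf_{y \in \Omega} \bigl( f(y) + K d(x,y) \bigr).
\]
Then $f_K$ is automatically $K$-Lipschitz and satisfies $f_K \le f$ pointwise. A one-variable optimization gives
\[
  \norm{f - f_K}_\infty \le \sup_{r \ge 0} \bigl( H r^\alpha - K r\bigr) = (1-\alpha)\, \alpha^{\alpha/(1-\alpha)}\, H^{1/(1-\alpha)}\, K^{-\alpha/(1-\alpha)}.
\]
In particular, for $K$ large one has $\norm{f_K}_\infty \le \norm{f}_\infty + 1$, so $\normLip{f_K} \le \norm{f}_\infty + 1 + K$, with the Lipschitz constant growing only linearly in the tuning parameter $K$.

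Given four Hölder-continuous functions $f_1, f_2, g_1, g_2$ with bounded Hölder norms, I would substitute each by its approximation $f_{i,K}$, $g_{i,K}$ in the 4-mixing expression~\eqref{eq:4mixing}. Since the remaining three factors in each term are uniformly bounded in sup norm, every substitution perturbs both sides of the inequality by $O(K^{-\alpha/(1-\alpha)})$, with constants depending only on the Hölder norms of the four original functions. Once all four replacements are made, the Lipschitz 4-mixing hypothesis gives the bound
\[
  C_2 e^{-n^{c_2}} \normLip{f_{1,K}} \normLip{f_{2,K}} \normLip{g_{1,K}} \normLip{g_{2,K}} \;\lesssim\; (1+K)^4\, e^{-n^{c_2}}.
\]
Combining the two kinds of error, the 4-mixing expression for the original Hölder functions is bounded by
\[
  C \bigl( K^{-\alpha/(1-\alpha)} + (1+K)^4\, e^{-n^{c_2}} \bigr),
\]
with $C$ depending only on the Hölder norms of $f_1, f_2, g_1, g_2$.

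Finally I would optimize in $K$ by choosing any sub-exponential growth that dominates the polynomial loss, for example $K \coloneqq e^{n^{c_2}/8}$. Then $(1+K)^4 e^{-n^{c_2}} \lesssim e^{-n^{c_2}/2}$ and $K^{-\alpha/(1-\alpha)} = \exp\bigl( -\alpha n^{c_2} / (8(1-\alpha)) \bigr)$. Both terms are stretched exponential in $n$, so $\theta$ is exponentially 4-mixing for $\alpha$-Hölder functions with rate $c \coloneqq c_2 \min \bigl( 1/2,\, \alpha/(8(1-\alpha)) \bigr)$. There is no substantial obstacle; the only point to verify is that the $(1+K)^4$ prefactor coming from the Lipschitz norms of the approximants is absorbed by the stretched exponential decay, which holds because $K$ itself is chosen only stretched-exponentially large. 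The same argument works verbatim for the ordinary $2$-mixing of Definition~\ref{def:mixing_base}, which is included as a special case.
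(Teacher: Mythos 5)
Your proof is correct and follows essentially the same outline as the paper's: approximate each Hölder function by a Lipschitz one with a tunable trade-off between $\norm{f-f^*}_\infty$ and $\cLip(f^*)$, substitute into the $4$-mixing inequality, and pick the tuning parameter to grow slowly enough that the mixing decay absorbs the polynomial loss. The only difference is in how the approximant is built: the paper restricts $f$ to a maximal $\kappa$-separated net and extends by McShane's formula, obtaining $\norm{f-f^*}_\infty\leq 2\,\mathrm{Hol}_\alpha(f)\,\kappa^\alpha$ and $\cLip(f^*)\leq\kappa^{-(1-\alpha)}\mathrm{Hol}_\alpha(f)$, while you use the inf-convolution $f_K=\inf_y\bigl(f(y)+Kd(\cdot,y)\bigr)$ over all of $\Omega$, giving the equivalent bound $\norm{f-f_K}_\infty\lesssim H^{1/(1-\alpha)}K^{-\alpha/(1-\alpha)}$. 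Reparametrizing by $K=\kappa^{-(1-\alpha)}H$ shows the two constructions give the same trade-off, so this is a cosmetic variation. One minor point: the paper's written proof chooses $\kappa=e^{-\epsilon n}$ and works against an exponential rate $e^{-cn}$, whereas your $K=e^{n^{c_2}/8}$ is calibrated against a genuinely stretched exponential rate $e^{-n^{c_2}}$; your choice is actually the one matching Definition~\ref{def:4mixing_base} (the paper flags in the surrounding remark that the argument is insensitive to the precise rate, which your version makes explicit).
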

This fact is not specific to exponential $4$-mixing: it works for any kind of
mixing rate. It follows from a standard interpolation argument, approximating
H\"older-continuous functions with Lipschitz ones. This is a standard fact on
Riemannian manifolds using mollifiers as we discussed above in the
construction of $\nu_\omega$, but it works in any metric space as we explain
now.

\begin{lem}
Let $(\Omega, d)$ be a metric space, and $\alpha>0$, $\kappa>0$. For any
$\alpha$-H\"older-continuous $f:\Omega \to \R$, there exists a Lipschitz
function $f^* : \Omega \to \R$ with $\norm{f-f^*}_\infty \leq 2
\mathrm{Hol}_\alpha(f) \kappa^\alpha$ and $\cLip(f^*) \leq
\kappa^{-(1-\alpha)} \mathrm{Hol}_\alpha(f)$, where
\begin{equation*}
  \mathrm{Hol}_\alpha(f) = \sup_{x \ne y} \frac{\abs{f(x) - f(y)}}{d(x, y)^\alpha}
\end{equation*}
is the best $\alpha$-H\"older constant of $f$.
\end{lem}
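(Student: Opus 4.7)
The plan is to construct $f^*$ by Moreau--Yosida regularization (inf-convolution with a Lipschitz kernel). Writing $H \coloneqq \mathrm{Hol}_\alpha(f)$ and $L \coloneqq \kappa^{-(1-\alpha)} H$, I would define
\[
f^*(x) \coloneqq \inf_{y \in \Omega} \pare*{f(y) + L\, d(x, y)}.
\]
Taking $y = x$ shows $f^*(x) \leq f(x)$, and the pointwise lower estimate $f(y) + L d(x,y) \geq f(x) - H d(x,y)^\alpha + L d(x,y)$ (coercive in $d(x,y)$ since $\alpha < 1$) guarantees that the infimum is finite, so $f^*$ is well defined on any metric space, without any compactness or boundedness hypothesis.

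The Lipschitz bound is immediate from the triangle inequality: for any $y$ one has $f(y) + L\, d(x_1, y) \leq f(y) + L\, d(x_2, y) + L\, d(x_1, x_2)$; taking the infimum over $y$ and symmetrizing in $(x_1, x_2)$ yields $\cLip(f^*) \leq L = \kappa^{-(1-\alpha)} H$, exactly the bound claimed.

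For the sup-norm estimate, the $\alpha$-H\"older continuity of $f$ gives
\[
0 \leq f(x) - f^*(x) = \sup_y \pare*{f(x) - f(y) - L\, d(x,y)} \leq \sup_{r \geq 0} \pare*{H r^\alpha - L r}.
\]
The supremum on the right is a one-variable calculus exercise: it is attained at $r_* = (H\alpha/L)^{1/(1-\alpha)}$ and equals $(1-\alpha)\alpha^{\alpha/(1-\alpha)} H^{1/(1-\alpha)} L^{-\alpha/(1-\alpha)}$. Substituting $L = \kappa^{-(1-\alpha)} H$ makes the $H$-and-$\kappa$ dependence collapse to $H \kappa^\alpha$, leaving a numerical prefactor $(1-\alpha)\alpha^{\alpha/(1-\alpha)}$ which is bounded by $1$ on $\alpha \in (0,1]$. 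The generous factor $2$ in the statement is therefore slack; no real obstacle arises, the only mild point being to verify that the optimization constants line up, which they do comfortably. (The degenerate case $\alpha = 1$ is vacuous: then $L = H$, $Hr - Lr \equiv 0$, and one may take $f^* = f$.)
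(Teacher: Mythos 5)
Your proof is correct and takes a genuinely different route from the paper's. The paper picks a maximal $\kappa$-separated net $A \subset \Omega$, observes that $f\restr{A}$ is $M$-Lipschitz with $M = \kappa^{-(1-\alpha)}\mathrm{Hol}_\alpha(f)$ because distinct points of $A$ are at distance at least $\kappa$, defines $f^*$ by McShane extension from $A$ (the infimum in their formula runs over $y \in A$ only), and gets the sup bound by noting every $x$ is within $\kappa$ of some $y\in A$ where $f$ and $f^*$ agree; this costs a factor $2$ because one pays $\mathrm{Hol}_\alpha(f)\kappa^\alpha$ both for $\abs{f(x)-f(y)}$ and for $\abs{f^*(x)-f^*(y)}$. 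You instead inf-convolve $f$ directly over the whole space (Moreau--Yosida / Pasch--Hausdorff regularization) and get the sup bound from the one-variable optimization of $Hr^\alpha - Lr$; this is arguably cleaner, avoids Zorn's lemma (needed for a maximal separated set in a general metric space), gives a one-sided approximation $f^* \le f$, and, as you observe, produces a sharper numerical constant $(1-\alpha)\alpha^{\alpha/(1-\alpha)} \le 1$ in place of $2$. Both arguments implicitly use $\alpha \le 1$ --- your coercivity argument for finiteness of the infimum and your optimization both break for $\alpha > 1$, and the paper's chain of inequalities $H d(x,y)^\alpha \le M d(x,y)$ on $A$ likewise reverses when $\alpha > 1$ --- so despite the hypothesis ``$\alpha > 0$'' the lemma should be read with $\alpha \in (0,1]$, which is the only case used; your remark about $\alpha = 1$ being degenerate handles the endpoint properly.
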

\begin{proof}
Let $A$ be a maximal $\kappa$-separated set in $\Omega$. Let also $M =
\mathrm{Hol}_\alpha(f) \kappa^{-(1-\alpha)}$. The restriction of $f$ to $A$
satisfies, for $x\ne y$, the inequality
\begin{equation*}
  \abs{f(x)-f(y)} \leq \mathrm{Hol}_\alpha(f) d(x,y)^\alpha \leq M d(x,y),
\end{equation*}
as $d(x,y) \geq \kappa$. Define $f^*$ on $\Omega$ by
\begin{equation*}
  f^*(x) = \inf_{y\in A} f(y) + M d(x,y).
\end{equation*}
The previous inequality ensures that this function coincides with $f$ on $A$,
and that it is $M$-Lipschitz globally. Let us check that $\norm{f-f^*}_\infty
\leq 2 \mathrm{Hol}_\alpha(f) \kappa^\alpha$. Take $x \in \Omega$. By
maximality of $A$, there exists $y\in A$ with $d(x,y)\leq \kappa$. Then, as
$f(y)=f^*(y)$, we get
\begin{equation*}
  \abs{f(x)-f^* (x)} \leq \abs{f(x) - f(y)} + \abs{f^*(x)-f^* (y)}
  \leq \mathrm{Hol}_\alpha(f) \kappa^\alpha + \cLip(f^*) \kappa
  \leq 2 \mathrm{Hol}_\alpha(f) \kappa^\alpha.
\qedhere
\end{equation*}
\end{proof}

\begin{proof}[Proof of Lemma~\ref{lem:extend_Lip_Hol}]
Start from $\alpha$-H\"older continuous functions $f_1, f_2, g_1, g_2$ and $a
\leq b \leq c$ for which one wants to prove~\eqref{eq:4mixing} with the
Lipschitz norm replaced by the H\"older norm. Let $\kappa= e^{-\epsilon n}$
with $\epsilon$ suitable small, and apply Lemma~\ref{lem:extend_Lip_Hol} to
get new functions $f_1^*, f_2^*, g_1^*, g_2^*$. Replacing each $f_i, g_i$
with its starred version in~\eqref{eq:4mixing} introduces an error controlled
by $e^{-\alpha \epsilon n}$, and therefore exponentially small. Thanks
to~\eqref{eq:4mixing} for Lipschitz functions, the remaining difference for
the starred functions is bounded by
\begin{equation*}
  C e^{-cn} \normLip{f_1^*} \normLip{f_2^*} \normLip{g_1^*} \normLip{g_2^*}
  \leq C e^{-cn} e^{4 (1-\alpha)\epsilon n}
  \norm{f_1^*}_{\mathrm{Hol_\alpha}} \norm{f_2^*}_{\mathrm{Hol_\alpha}}
  \norm{g_1^*}_{\mathrm{Hol_\alpha}} \norm{g_2^*}_{\mathrm{Hol_\alpha}},
\end{equation*}
which is exponentially small if $\epsilon$ was chosen small enough at the
beginning of the argument.
\end{proof}

\begin{rmk}
With respect to the proof of Proposition~\ref{prop:random-local-diffeo}, we
would like to remark that probably all of the arguments are well known but
that they had to be adapted to our situation in order to prove H\"older
continuity of $\omega \mapsto \mu_\omega$ and mixing with respect to
Lipschitz functions (instead of functions in $C^1(X)$) along the fibers. In
here, it turned out to be advantageous to first construct the family of
conformal measures and thereafter the invariant functions for the family of
transfer operators as the regularity of $\omega \mapsto h_\omega$ and $\omega
\mapsto \mu_\omega$ are consequences of the regularity of $\omega \mapsto
\nu_\omega$. Moreover, as the arguments essentially depend on the existence
of the cone field, they probably can be easily adapted to other settings. It
is also worth noting that we did not make use of all features of the cone
field as we never touched the convergence of the logarithmic derivatives,
which is provided by the definition of the cones.

Furthermore, we also would like to draw attention
to~\cite{Atnip-Froyland-Vaienti--Thermodynamic-Formalism-For-Random--CMP2021,Atnip-Froyland-Vaienti--Equilibrium-States-For-Non-transitive--2022},
where the authors studied similar cones adapted to random interval
transformations. In there, the cones are defined through the BV-norm instead
of the $C^1(X)$ norm. In particular, provided that $\log \mathcal{L}^n$ is
sufficiently regular (see  Lemma~\ref{lemma:continuity-of-Ln}), the above
proof is applicable in verbatim.

We also would like to point out that the H\"older continuity of $\nu_\omega$
with respect to $\omega$ was obtained
in~\cite{Denker-Gordin--Gibbs-Measures-For-Fibred--AM1999} in a uniformly
expanding setting for the more general setting of fibered systems.
\end{rmk}

\section{Proofs}

\label{sec:proofs}

\subsection{Preliminaries}

Given a space with bounded local complexity, consider for each $r$ a finite
sequence $x^{(r)}_1,\dotsc, x^{(r)}_{k(r)}$ of points as in the definition,
such that the space is covered by the balls $B(x_p^{(r)}, r)$ and such that
no point is in more than $C_0$ balls $B(x_p^{(r)}, 4r)$. Fix also functions
$\rho^{(r)}_p$ supported around $x_p^{(r)}$, which are equal to $1$ on
$B(x_p^{(r)}, 2r)$, to $0$ outside of $B(x_p^{(r)}, 4r)$, and take values in
$[0,1]$. For instance, one can take $\rho^{(r)}_p(x) = \rho( d(x,x_p)/r))$
where $\rho : \R \to \R$ is equal to $1$ on $(-\infty, 2]$, to $0$ on
$[4,\infty)$ and affine in between. With this specific choice, one has a
Lipschitz control
\begin{equation}
\label{eq:normLip_rhop}
  \normLip{\rho_p^{(r)}}\leq \frac{1}{r}
\end{equation}
that will prove useful later.

The main point of these definitions is that one can approximate $(x,y)\mapsto
1_{d(x,y)\le r}$ by a sum of functions $\rho^{(r)}_p$, to which we will be
able to apply mixing arguments:
\begin{lem}
\label{lem:comparison}
For any $x,y\in X$,
\begin{equation}
\label{eq:compare}
  1_{d(x,y)\le r} \leq \sum_{p=1}^{k(r)} \rho^{(r)}_p(x) \rho^{(r)}_p(y) \leq C_0 1_{d(x,y)\le 8 r}.
\end{equation}
\end{lem}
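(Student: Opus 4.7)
The plan is to prove the two inequalities in~\eqref{eq:compare} separately, using different features of the covering: the covering property of the balls $B(x_p^{(r)}, r)$ for the lower bound, and the bounded local complexity property of the balls $B(x_p^{(r)}, 4r)$ for the upper bound.

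For the lower bound, assume $d(x,y) \le r$. Since the balls $B(x_p^{(r)}, r)$ cover $X$, I would pick some index $p$ with $x \in B(x_p^{(r)}, r)$, i.e., $d(x, x_p^{(r)}) \le r$. By the triangle inequality, $d(y, x_p^{(r)}) \le d(y,x) + d(x, x_p^{(r)}) \le 2r$, so both $x$ and $y$ lie in $B(x_p^{(r)}, 2r)$. Since $\rho_p^{(r)} \equiv 1$ on this ball, the $p$-th term of the sum equals $1$, and all other terms are nonnegative, giving the lower bound.

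For the upper bound, I would split into two cases. If $d(x,y) > 8r$, then for any $p$ the triangle inequality rules out both $x, y \in B(x_p^{(r)}, 4r)$, so at least one of $\rho_p^{(r)}(x), \rho_p^{(r)}(y)$ vanishes (each $\rho_p^{(r)}$ is supported in $B(x_p^{(r)}, 4r)$). Hence the sum is zero, matching $C_0 \cdot 1_{d(x,y) \le 8r} = 0$. If on the other hand $d(x,y) \le 8r$, each term $\rho_p^{(r)}(x) \rho_p^{(r)}(y)$ is at most $1$, and can be nonzero only when $x \in B(x_p^{(r)}, 4r)$; by the bounded local complexity assumption (Definition~\ref{def:local_complexity}) there are at most $C_0$ such indices $p$, which bounds the sum by $C_0$ as required.

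No step here looks genuinely hard; the only thing to be careful about is keeping the thresholds consistent ($r$ for the covering, $2r$ for where $\rho_p^{(r)} = 1$, $4r$ for the support, and $8r$ for the resulting indicator on the right), so that the triangle-inequality arithmetic comes out cleanly on both sides.
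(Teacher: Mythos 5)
Your proof is correct and follows essentially the same approach as the paper's: the lower bound via the covering property and the triangle inequality to land both points in $B(x_p^{(r)}, 2r)$, and the upper bound via the support of $\rho_p^{(r)}$ in $B(x_p^{(r)}, 4r)$ together with the bounded-local-complexity count of $C_0$. The only cosmetic difference is that you split the upper bound into two cases on $d(x,y)$, while the paper derives the indicator $1_{d(x,y)\le 8r}$ directly from the support condition on $\rho_p^{(r)}(y)$; the underlying arithmetic is identical.
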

\begin{proof}
Assume $d(x,y)\le r$. There is some $p=p(x, r)$ such that $x\in B(x^{(r)}_p,
r)$, as these balls cover the space. Then $y \in B(x,r) \subseteq
B(x^{(r)}_p, 2r)$. Therefore, $\rho^{(r)}_p(x) = \rho^{(r)}_p(y) = 1$,
proving the left inequality.

Conversely, in the sum $\sum_p \rho^{(r)}_p(x) \rho^{(r)}_p(y)$, a term can
only be nonzero if $d(x,x_p) \le 4r$. There are at most $C_0$ such values of
$p$, by definition of bounded local complexity. For each such $p$, the factor
$\rho^{(r)}_p(y)$ can only be nonzero if $d(y, x_p^{(r)}) \leq 4r$, which
implies $d(x,y)\leq 8r$. This proves the right inequality.
\end{proof}

This lemma makes it possible to express the different correlation dimensions in terms
of the discretization $\rho_p^{(r)}$:
\begin{lem}
\label{lem:rhop_tendsto} One has
\begin{equation*}
  \limsup_{r\to 0} \frac{\displaystyle\log\pare*{\sum_{p=1}^{k(r)} \pare*{\int \rho^{(r)}_p \dd\mu}^2}}{\log r}
  = \oHan,
\end{equation*}
and
\begin{equation*}
  \limsup_{r\to 0} \frac{\displaystyle\log\pare*{\sum_{p=1}^{k(r)} \int \pare*{\int \rho^{(r)}_p \dd\mu_\omega}^2 \dd\Pbb(\omega)}}{\log r}
  = \oHqu,
\end{equation*}
Using liminfs instead, similar equations hold for $\uHan$ and $\uHqu$.
\end{lem}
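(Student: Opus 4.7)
The plan is to apply Fubini and then sandwich the quantity of interest between two ordinary correlation integrals using Lemma~\ref{lem:comparison}. Set
\begin{equation*}
  A(r) = \int \mu(B(x,r)) \dd\mu(x), \qquad S(r) = \sum_{p=1}^{k(r)} \pare*{\int \rho_p^{(r)} \dd\mu}^2.
\end{equation*}
Expanding the square and exchanging sum and integrals gives $S(r) = \iint \sum_p \rho_p^{(r)}(x)\rho_p^{(r)}(y) \dd\mu(x)\dd\mu(y)$. Integrating the pointwise inequalities~\eqref{eq:compare} against $\mu\otimes\mu$ therefore yields
\begin{equation*}
  A(r) \leq S(r) \leq C_0\, A(8r).
\end{equation*}

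Next I would take logarithms (all three quantities are positive and tend to $0$ for small $r$) and divide by $\log r$, which is negative for small $r$, reversing the inequalities. Writing $s = 8r$, one has
\begin{equation*}
  \frac{\log C_0 + \log A(8r)}{\log r} = \frac{\log C_0}{\log r} + \frac{\log A(s)}{\log s}\cdot\frac{\log s}{\log s - \log 8},
\end{equation*}
in which the first summand tends to $0$ and the last factor to $1$ as $r\to 0^+$. Hence $\limsup_{r\to 0} \log S(r)/\log r = \limsup_{r\to 0}\log A(r)/\log r = \oHan$, which is the first identity; the liminf version is verbatim identical and gives $\uHan$.

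For the quenched statement, the argument is entirely analogous: for each fixed $\omega$, integrating~\eqref{eq:compare} against $\mu_\omega\otimes\mu_\omega$ gives
\begin{equation*}
  \int \mu_\omega(B(x,r)) \dd\mu_\omega(x) \leq \sum_p \pare*{\int \rho_p^{(r)} \dd\mu_\omega}^2 \leq C_0 \int \mu_\omega(B(x,8r)) \dd\mu_\omega(x),
\end{equation*}
and a subsequent integration against $\Pbb$ sandwiches $\sum_p \int (\int \rho_p^{(r)} \dd\mu_\omega)^2 \dd\Pbb(\omega)$ between the defining integrals for $\uHqu$ and $\oHqu$. The same logarithmic manipulation then produces the second identity and its liminf counterpart.

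There is no genuine obstacle. The only subtlety worth flagging in the write-up is the reversal of inequalities when dividing by $\log r < 0$, together with the harmless rescaling $\log(8r)/\log r \to 1$ which absorbs the factor $8$ appearing inside the ball radius. Everything else is mechanical.
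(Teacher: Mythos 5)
Your proposal is correct and follows essentially the same route as the paper: integrate the pointwise inequalities of Lemma~\ref{lem:comparison} against $\mu\otimes\mu$ (resp.\ $\mu_\omega\otimes\mu_\omega$ and then $\Pbb$) to sandwich the discrete sum between $\int\eta(B(x,r))\dd\eta(x)$ and $C_0\int\eta(B(x,8r))\dd\eta(x)$, after which the logarithmic rescaling $\log(8r)/\log r\to 1$ gives the identity. You merely spell out the final step that the paper leaves as ``the lemma follows readily from the definitions.''
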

\begin{proof}
We claim that, for any probability measure $\eta$ on $X$,
\begin{equation*}
  \int \eta(B(x,r)) \dd\eta(x)
  \leq \sum_{p=1}^{k(r)} \pare*{\int \rho^{(r)}_p \dd\eta}^2
  \leq C_0 \int \eta(B(x,8r)) \dd\eta(x).
\end{equation*}
This follows from integrating the inequalities in Lemma~\ref{lem:comparison}
with respect to $\eta^{\otimes 2}$.

Applying these inequalities to the measures $\mu$ or $\mu_\omega$, the lemma
follows readily from the definitions of the correlation dimensions.
\end{proof}

\begin{lem}
\label{lem:k_le} Let $k = k(r)$ be as in
Definition~\ref{def:local_complexity}. Then, for small enough $r$, one has
$k(r) \leq r^{-C'_0}$ for $C'_0 = 4 \log C_0$.
\end{lem}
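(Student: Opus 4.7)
The plan is to derive a doubling-type inequality relating $k(r/2)$ to $k(r)$, iterate it, and convert the resulting geometric growth into the claimed polynomial bound in $r$.

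The key step is to show that for all small enough $r$ (so that Definition~\ref{def:local_complexity} applies at scales $r$ and $r/2$ simultaneously) one has
\begin{equation*}
  k(r/2) \le C_0 \, k(r).
\end{equation*}
I would argue this by proving first that any ball $B(x,r)$ of radius $r$ contains at most $C_0$ of the centers of the $r/2$-cover. Indeed, if $z_1,\dotsc,z_\ell$ are distinct centers $x^{(r/2)}_q$ lying in $B(x,r)$, then for every $i$ we have $d(z_1,z_i)\le 2r = 4\cdot(r/2)$, so $z_1$ belongs to each of the $\ell$ balls $B(z_i,4\cdot(r/2))$. The bounded local complexity condition applied at scale $r/2$ then forces $\ell\le C_0$. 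Now each center of the $r/2$-cover lies in at least one of the $k(r)$ covering balls $B(x^{(r)}_p,r)$ (since these cover $X$), and each such ball contains at most $C_0$ centers by the above, so summing gives the claimed inequality.

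Iterating from any fixed (small) scale $r_0$ yields $k(r_0/2^n)\le C_0^n\, k(r_0)$ for all $n\ge 0$. Setting $r=r_0/2^n$, so that $n=\log_2(r_0/r)$, this rewrites as
\begin{equation*}
  k(r)\le k(r_0)\,\pare*{r_0/r}^{\log_2 C_0}.
\end{equation*}
Because $1/\log 2 < 4$, one has $\log_2 C_0 < 4\log C_0 = C'_0$, so the exponent of $1/r$ on the right is strictly smaller than $C'_0$. The constant factor $k(r_0)\,r_0^{\log_2 C_0}$ is therefore absorbed into the extra factor $r^{\log_2 C_0 - C'_0}$, which tends to $+\infty$ as $r\to 0$. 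Hence $k(r)\le r^{-C'_0}$ for all sufficiently small $r$.

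There is no real obstacle; the only step requiring care is the doubling inequality, and it works precisely because the distance $2r$ between two $r/2$-centers sitting inside a common ball of radius $r$ matches the enlargement factor $4\cdot(r/2)$ appearing in Definition~\ref{def:local_complexity}.
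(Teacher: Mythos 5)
Your argument is essentially the paper's: prove the doubling inequality $k(r/2)\le C_0\,k(r)$ via the multiplicity condition at the finer scale, iterate, and use $\log_2 C_0 < 4\log C_0 = C'_0$ to absorb the constant. The way you justify the doubling step is phrased a little differently (you bound the number of $r/2$-centers inside an arbitrary ball $B(x,r)$, whereas the paper bounds the number of $r$-centers inside a covering ball $B(x^{(2r)}_q,2r)$), but the content — triangle inequality followed by the ``at most $C_0$ balls of radius $4\cdot(\text{fine scale})$'' condition — is identical. The one small thing to fix: iterating downward from a fixed $r_0$ only yields the bound along the dyadic sequence $r=r_0/2^n$, yet the conclusion is stated for every small $r$; the standard remedy, which is what the paper does, is to iterate upward from the given $r$, writing $k(r)\le C_0^n k(2^n r)$ and choosing $n=n(r)$ so that $2^n r$ lands in a fixed bounded range where $k$ is controlled.
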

\begin{proof}
Let us show that
\begin{equation}
\label{eq:bound_k_rec}
  k(r) \leq C_0 k(2r).
\end{equation}
Since the balls $(B(x^{(2r)}_q))_{q\leq k(2r)}$ cover the space, any point
$x_p^{(r)}$ belongs to one of these balls, for some $q=q(p)$. This defines a
map $\{1,\dotsc, k(r)\}\to \{1,\dotsc, k(2r)\}$. Moreover, each $q$ has at
most $C_0$ preimages, by definition of the bounded local complexity. This
shows~\eqref{eq:bound_k_rec}.

We deduce that $k(r) \leq C_0^n k(2^n r)$. Take $n$ so that $2^n r$ is of the
order of magnitude of $1$ (so that $k(2^n r)$ is bounded), e.g., $n =
-\lfloor \log r / \log 2\rfloor$. This gives
\begin{equation*}
  k(r) \leq C C_0^{-\log r/\log 2} = C r^{-\log C_0/\log 2}.
\end{equation*}
As $4\log C_0 > \log C_0/\log 2$, the conclusion follows.
\end{proof}

All the forthcoming proofs will be based on the same scheme. We will fix a
small enough~$r$, and use the functions $\rho_p^{(r)}$, omitting the
superscript ${}^{(r)}$ for readability. We will need to compute the first or
second moment of some functions, to apply Markov inequalities to estimate
ultimately the probability that $m_n$ is smaller or larger than $r$, and then
conclude with a Borel-Cantelli argument. Depending on the precise quantity to
be estimated, we will need stronger or weaker mixing conditions. The
following notations will be used throughout:
\begin{itemize}
\item We define a function $R_p : \Omega \to \R$ by
\begin{equation}
\label{eq:defRp}
  R_p(\omega) = \int \rho_p \dd\mu_\omega.
\end{equation}
As $\rho_p$ is Lipschitz with $\normLip{\rho_p}\leq 1/r$
by~\eqref{eq:normLip_rhop}, the assumption that $\omega \mapsto \mu_\omega$
is Lipschitz ensures that $R_p$ is also Lipschitz, with
\begin{equation}
\label{eq:normLip_Rp}
  \normLip{R_p} \leq C_2/r.
\end{equation}
With this notation, the second part of Lemma~\ref{lem:rhop_tendsto} can be
reformulated as:
\begin{equation}
\label{eq:limsup_Rp}
  \limsup_{r\to 0} \frac{\displaystyle \log \pare*{\sum_p \int R_p^2 \dd\Pbb}}{\log r} = \oHqu,
\end{equation}
and similarly for the liminf.

\item Both the base map $\theta$ and the fiber map $T_\omega$ are mixing
    stretched exponentially, with respective constants $(C_2, c_2)$ and
    $(C_3, c_3)$. We define a function
\begin{equation*}
  \phi(n) = C_\phi \exp(-n^{c_\phi}),
\end{equation*}
with $C_\phi=\max(C_2, C_3)$ and $c_\phi = \min(c_2, c_3)$, that bounds
from above the mixing rate of both maps. Its main property is that, with
our choice of the gap function $\alpha$ in~\eqref{eq:def_alpha}, one has
\begin{equation}
\label{eq:phi_alpha_le}
  \phi(\alpha(n)) \leq \frac{C}{n^{\log n}}.
\end{equation}
In particular, $\phi(\alpha(n))$ tends to zero faster than any polynomial,
which is the property we will use below.
\end{itemize}

\subsection{Upper bounds}
\label{ssubec:proof_upper}

In this paragraph, we prove Propositions~\ref{prop:Mnle_upper_bound}
and~\ref{prop:Mngt_upper_bound}, giving upper bounds for $-\log m_n^\le$ and
$-\log m_n^>$, i.e., showing that respectively on-diagonal and off-diagonal
distances along orbits cannot be too small. The probability that they are
too small will be estimated thanks to a first moment computation, and we will
conclude with a Borel-Cantelli argument. These arguments are pretty soft, as
testified by the fact that the assumptions in these propositions are much
milder than for the corresponding lower bounds.

\begin{proof}[Proof of Proposition~\ref{prop:Mnle_upper_bound} on on-diagonal upper bounds]
First of all, we observe that if $\uHqu = 0$, the theorem is empty. Assume
now that $\uHqu > 0$. Let $\epsilon < \uHqu$. Fix a small enough $r$.

Define a function
\begin{equation*}
  S_n^\le(\omega; x, y) = \sum_{\substack{i, j < n\\ \abs{j-i} \leq \alpha(n)}} \sum_p \rho_p(T_\omega^i x) \rho_p(T_\omega^j y),
\end{equation*}
where $\rho_p = \rho_p^{(r)}$ is the discretization at scale $r$.

If $m_n^\le(\omega; x, y) \le r$, then there are two indices $i, j$ such that
$d(T_\omega^i x, T_\omega^j y) \le r$. Then $S_n^\le(\omega; x, y) \ge 1$ by
Lemma~\ref{lem:comparison}. With Markov's inequality, we obtain
\begin{align*}
  \Pbb\otimes \mu_\omega\otimes \mu_\omega\{(\omega, x, y) \st & m_n^\le(\omega; x, y) \le r\}
  \\& \le \Pbb\otimes \mu_\omega\otimes \mu_\omega\{(\omega, x, y) \st S_n^\le(\omega; x, y) \ge 1\}
  \le \E(S_n^\le).
\end{align*}
Let us now compute this expectation. We have
\begin{align*}
  \E(S_n^\le) &= \sum_p \sum_{\abs{j-i}\le \alpha(n)} \int \rho_p(T_\omega^i x) \rho_p(T_\omega^j y) \dd\mu_\omega(x) \dd\mu_\omega(y) \dd\Pbb(\omega)
  \\& = \sum_p \sum_{\abs{j-i}\le \alpha(n)} \int \pare*{\int \rho_p \dd\mu_{\theta^i \omega}} \pare*{\int \rho_p \dd\mu_{\theta^j \omega}} \dd\Pbb(\omega),
\end{align*}
since $T_\omega^i x$ is distributed according to $\mu_{\theta^i \omega}$ when
$x$ is distributed according to $\mu_\omega$, by equivariance. We recall the
notation $R_p$ introduced in~\eqref{eq:defRp}. Applying the Cauchy-Schwarz
inequality and then using the invariance of $\Pbb$ under $\theta$, we obtain
\begin{align*}
  \E(S_n^\le) & \le \sum_p \sum_{\abs{j-i}\le \alpha(n)} \left[\int R_p(\theta^i \omega)^2 \dd\Pbb(\omega)\right]^{1/2}
            \left[\int R_p(\theta^j \omega)^2 \dd\Pbb(\omega)\right]^{1/2}
  \\& = \sum_p \sum_{\abs{j-i}\le \alpha(n)} \int R_p(\omega)^2 \dd\Pbb(\omega).
\end{align*}

The sum over $i,j$ reduces to $2n \cdot \alpha(n)$. Thanks
to~\eqref{eq:limsup_Rp}, if $r$ is small enough one has $\sum_p \int
R_p(\omega)^2 \dd\Pbb(\omega) \leq r^{\uHqu - \epsilon}$. We have obtained
\begin{equation*}
  \Pbb\otimes \mu_\omega\otimes \mu_\omega\{(\omega, x, y) \st m_n^\le(\omega; x, y) \le r\}
  \le 2n \alpha(n) r^{\uHqu - \epsilon}.
\end{equation*}

Take now $r =r_n = 1/n ^{(1+2\epsilon)/(\uHqu - \epsilon)}$. As $\alpha(n) =
O(n^\epsilon)$, the previous bound gives
\begin{equation*}
  \Pbb\otimes \mu_\omega\otimes \mu_\omega\{(\omega, x, y) \st m_n^\le(\omega; x, y) \le r_n\} = O(1/n^\epsilon).
\end{equation*}
Choose a subsequence $n_s = \lfloor s^{2/\epsilon} \rfloor$. Along this
subsequence, the error probability is $O(1/s^2)$, which is summable. By
Borel-Cantelli, for almost every $(\omega, x, y)$, one has eventually
$m_{n_s}^\le(\omega; x, y) > r_{n_s}$, and therefore
\begin{equation*}
  \frac{-\log m_{n_s}^\le(\omega; x, y)}{\log n_s} \le \frac{1+2\epsilon}{\uHqu - \epsilon}.
\end{equation*}
As $-\log m_n^\le(\omega; x, y)$ is a non-decreasing function of $n$ and
$\log n_{s+1} / \log n_s\to 1$, this inequality along the subsequence $n_s$
passes to the whole sequence. We obtain almost surely
\begin{equation*}
  \limsup \frac{-\log m_n^\le(\omega; x, y)}{\log n} \le \frac{1+2\epsilon}{\uHqu - \epsilon}.
\end{equation*}
As $\epsilon$ is arbitrary, this proves the result.
\end{proof}

\begin{proof}[Proof of Proposition~\ref{prop:Mngt_upper_bound} on  off-diagonal upper bounds]
We follow the same strategy as in the previous proof. The result is obvious
if $\uHan = 0$. Assume that $\uHan > 0$. Let $\epsilon < \uHan$. Fix a small
enough $r$.

Define a function
\begin{equation*}
  S_n^>(\omega; x, y) = \sum_{\substack{i, j < n\\ \abs{j-i} > \alpha(n)}} \sum_p \rho_p(T_\omega^i x) \rho_p(T_\omega^j y).
\end{equation*}
As above, we have
\begin{align*}
  \Pbb\otimes \mu_\omega\otimes \mu_\omega\{(\omega, x, y) \st & m_n^>(\omega; x, y) \le r\}
  \\& \le \Pbb\otimes \mu_\omega\otimes \mu_\omega\{(\omega, x, y) \st S_n^>(\omega; x, y) \ge 1\}
  \le \E(S_n^>).
\end{align*}
Moreover,
\begin{equation*}
  \E(S_n^>)
  = \sum_{\substack{i, j < n\\ \abs{j-i} > \alpha(n)}} \sum_p \int R_p(\theta^i \omega) R_p(\theta^j \omega) \dd\Pbb(\omega),
\end{equation*}
as in the proof of Proposition~\ref{prop:Mnle_upper_bound}.

As the speed of mixing of $\theta$ is at least $\phi$ by definition, we have
\begin{equation*}
\int R_p(\theta^i \omega) R_p(\theta^j \omega) \dd\Pbb(\omega) = \pare*{\int
R_p(\omega)\dd\Pbb(\omega)}^2 + O(\phi(\alpha(n)) r^{-2}),
\end{equation*}
as $R_p$ has a Lipschitz norm bounded by $C_2/r$ (see~\eqref{eq:normLip_Rp}),
and $i$ and $j$ are separated by at least $\alpha(n)$.

Summing over $i, j$ (there are less than $n^2$ of them) and then $p$ (there are $k(r)$
of them), we get
\begin{equation*}
  \E(S_n^>) \leq n^2 \sum_p \pare*{\int R_p(\omega)\dd\Pbb(\omega)}^2 + C n^2 k(r) \phi(\alpha(n)) r^{-2}.
\end{equation*}
Note that $\int R_p(\omega)\dd\Pbb(\omega) = \int \rho_p \dd\mu$. By
Lemma~\ref{lem:rhop_tendsto}, we have for small enough $r$
\begin{equation*}
  \sum_p \pare*{\int \rho_p \dd\mu}^2 \leq r^{\uHan - \epsilon}.
\end{equation*}
Moreover, $k(r) \leq r^{-C'_0}$ by Lemma~\ref{lem:k_le}. Finally,
\begin{equation*}
  \Pbb\otimes \mu_\omega\otimes \mu_\omega\{(\omega, x, y) \st m_n^>(\omega; x, y) \le r\}
  \le n^2 r^{\uHan - \epsilon} + C n^2 r^{-(C'_0 + 2)} \phi(\alpha(n)).
\end{equation*}

Take now $r =r_n = 1/n ^{(2+\epsilon)/(\uHan - \epsilon)}$. As
$\phi(\alpha(n)) \leq C/n^{\log n}$ by~\eqref{eq:phi_alpha_le}, this gives
\begin{equation*}
  \Pbb\otimes \mu_\omega\otimes \mu_\omega\{(\omega, x, y) \st m_n^>(\omega; x, y) \le r_n\}
  \le 1/n^\epsilon + C' n^{C(\epsilon)} / n^{\log n}.
\end{equation*}
For large enough $n$, this is bounded by $2/n^\epsilon$.

Choose a subsequence $n_s = \lfloor s^{2/\epsilon} \rfloor$. Along this
subsequence, the error probability is $O(1/s^2)$, which is summable. By
Borel-Cantelli, for almost every $(\omega, x, y)$, one has eventually
$m_{n_s}^>(\omega; x, y) > r_{n_s}$, and therefore
\begin{equation*}
  \frac{-\log m_{n_s}^>(\omega; x, y)}{\log n_s} \le \frac{2+\epsilon}{\uHan - \epsilon}.
\end{equation*}
Contrary to the proof of Proposition~\ref{prop:Mnle_upper_bound}, we can not
argue from there by monotonicity, as $-\log m_n^>$ is not a non-decreasing
function of $n$ (we are considering more times, but the constraint
$\abs{j-i}> \alpha(n)$ becomes stronger when $n$ increases). What is true,
though, is that for all $n\in [n_s, n_{s+1}]$, one has
\begin{equation*}
  -\log m_n^>(\omega; x, y) \le -\log \min_{\substack{i, j < n_{s+1}\\ \abs{j-i} > \alpha(n_s)}} d(T_\omega^i x, T_\omega^j y) \eqqcolon -\log m'_{n_{s+1}}.
\end{equation*}
One can show exactly as above that, almost surely, eventually,
\begin{equation*}
  \frac{-\log m'_{n_s}(\omega; x, y)}{\log n_s} \le \frac{2+\epsilon}{\uHan - \epsilon}.
\end{equation*}
With the previous equation, this inequality passes to the whole sequence
$m_n^>$. We obtain almost surely
\begin{equation*}
  \limsup \frac{-\log m_n^>(\omega; x, y)}{\log n} \le \frac{2 + \epsilon}{\uHan - \epsilon}.
\end{equation*}
As $\epsilon$ is arbitrary, this proves the result.
\end{proof}

\begin{rmk}
\label{rmk:polynomial_enough1} Stretched exponential mixing is not essential
for the proof of Proposition~\ref{prop:Mngt_upper_bound}: using a gap size
$\alpha(n) = n^\delta$ for an arbitrarily small $\delta$ instead of
$\alpha(n) = (\log n)^{C_4}$ as we did, the proof goes through if the mixing
speed of $\theta$ is faster than any polynomial.
Therefore,~\eqref{eq:upper_bound} holds under this assumption. The stretched
exponential mixing is however necessary in our argument for the lower bound
for $m_n$ (see the proof of Proposition~\ref{prop:Mngg_lower_bound}).
\end{rmk}

\subsection{Lower bounds}
\label{subsec:proof_lower}

In this paragraph, we prove Propositions~\ref{prop:Mn0_lower_bound}
and~\ref{prop:Mngg_lower_bound}, giving lower bounds for $-\log m_n^0$ and
$-\log m_n^\gg$, i.e., showing that respectively on-diagonal and off-diagonal
distances along orbits can not be too large. The probability that they are
too large will be estimated thanks to a second moment computation: we will
control the average of a suitable function, and its variance to show that
this function can not deviate much from its average. We will then conclude
with a Borel-Cantelli argument. These arguments are more technical than the
ones for the corresponding upper bounds, as we need estimates on moments of
order two instead of one (and therefore stronger mixing assumptions to be
able to deal with the more involved terms that show up).

\begin{proof}[Proof of Proposition~\ref{prop:Mn0_lower_bound} on on-diagonal lower bounds.]
Let $\epsilon > 0$. Fix a small enough $r$.

Define a function
\begin{equation*}
  S_n^0(\omega; x, y) = \sum_{i < n} \sum_p \rho_p(T_\omega^i x) \rho_p(T_\omega^i y),
\end{equation*}
where $\rho_p = \rho_p^{(r)}$ is the discretization at scale $r$.

If $m^0_n(\omega; x, y) > 8r$, then for all $i$ one has $d(T_\omega^i x,
T_\omega^i y) > 8r$. Then $S_n^0(\omega; x, y) = 0$ by
Lemma~\ref{lem:comparison}. We obtain
\begin{align*}
  \Pbb\otimes \mu_\omega\otimes \mu_\omega\{(\omega, x, y) \st & m_n^0(\omega; x, y) > 8r\}
  \\& \le \Pbb\otimes \mu_\omega\otimes \mu_\omega\{(\omega, x, y) \st S_n^0(\omega; x, y) = 0\}.
\end{align*}
When $S_n^0 = 0$, then $(S_n^0 - \E(S_n^0))^2 / \E(S_n^0)^2 = 1$. With Markov
inequality, this gives
\begin{equation}
\label{eq:Pbb_le_lmjhsmld}
  \Pbb\otimes \mu_\omega\otimes \mu_\omega\{(\omega, x, y) \st m_n^0(\omega; x, y) > 8r\} \leq
  \frac{\var(S_n^0)}{\E(S_n^0)^2} = \frac{\E((S_n^0)^2) - \E(S_n^0)^2}{\E(S_n^0)^2}.
\end{equation}

We have
\begin{align*}
  \E(S_n^0) &= \sum_p \sum_{i} \int \rho_p(T_\omega^i x) \rho_p(T_\omega^i y) \dd\mu_\omega(x) \dd\mu_\omega(y) \dd\Pbb(\omega)
  \\& = \sum_p \sum_i \int R_p(\theta^i \omega) \cdot R_p(\theta^i \omega)\dd\Pbb(\omega)
  \\& = n \sum_p \int R_p(\omega)^2 \dd\Pbb(\omega),
\end{align*}
by $\theta$-invariance of $\Pbb$. Therefore, its asymptotic behavior is
described by~\eqref{eq:limsup_Rp}.

Let us now estimate $\E((S_n^0)^2)$. Expanding the square, we get
\begin{equation*}
  \E((S_n^0)^2) = \sum_{i,i'} \sum_{p,q} \int  \rho_p(T_\omega^i x) \rho_p(T_\omega^i y)  \rho_q(T_\omega^{i'} x) \rho_q(T_\omega^{i'} y)
    \dd\mu_\omega(x) \dd\mu_\omega(y) \dd\Pbb(\omega).
\end{equation*}
We split the sum according to whether $\abs{i'-i}\le \alpha(n)$ or
$\abs{i'-i}> \alpha(n)$. In the former case, we will use a crude upper bound,
and in the latter we will use mixing. Let us fix $i,i'$.

We have $\sum_q \rho_q(T_\omega^{i'} x) \rho_q(T_\omega^{i'} y) \leq C_0$,
by~\eqref{eq:compare}. Therefore, we get a bound
\begin{equation*}
  C_0 \sum_p \int \rho_p(T_\omega^i x) \rho_p(T_\omega^i y) \dd\mu_\omega(x) \dd\mu_\omega(y) \dd\Pbb(\omega)
  = C_0\sum_p \int R_p(\omega)^2 \dd\Pbb(\omega).
\end{equation*}
We will only use this crude bound when $\abs{i'-i}\le \alpha(n)$. Therefore,
we will get $2n \alpha(n)$ of them.

Assume now $\abs{i'-i} > \alpha(n)$. Then
\begin{equation*}
  \int \rho_p(T_\omega^i x) \rho_q(T_\omega^{i'} x) \dd\mu_\omega(x)
  = \pare*{\int\rho_p \dd\mu_{\theta^i \omega}} \pare*{\int\rho_q \dd\mu_{\theta^{i'} \omega}}
  + O (\phi(\alpha(n)) \normLip{\rho_p} \normLip{\rho_q}),
\end{equation*}
by fiberwise mixing (Definition~\ref{def:mixing_fiber}). As $\normLip{\rho_p}
\leq 1/r$, the contribution of the error terms to $\E((S_n^0)^2)$ is bounded
by
\begin{equation}
\label{eq:sum_error_terms}
  C \sum_{i, i'} \sum_{p, q} \phi(\alpha(n)) r^{-2}
  \leq C n^2 r^{-2C'_0-2} \phi(\alpha(n)),
\end{equation}
as there are $n$ possible values of $i, i'$, and $k(r) \le r^{-C'_0}$
possible values of $p$, $q$.

Let us use the same estimate for the integral with respect to
$\dd\mu_\omega(y)$. The remaining term is
\begin{equation*}
  \sum_{p,q} \sum_{\abs{i'-i} > \alpha(n)} \int R_p(\theta^i \omega)^2 R_q(\theta^{i'} \omega)^2 \dd\Pbb(\omega).
\end{equation*}
The function $R_p$ is Lipschitz, with $\normLip{R_p} \leq C_1 / r$,
see~\eqref{eq:normLip_Rp}. As $R_p$ is bounded by $1$, it follows that
$R_p^2$ is also Lipschitz, with $\normLip{R_p^2} \leq 2C_1/r$. One may
therefore use the mixing of the base transformation $\theta$, to obtain
\begin{equation*}
  \int R_p(\theta^i \omega)^2 R_q(\theta^{i'} \omega)^2 \dd\Pbb(\omega)
  = \pare*{\int R_p^2 \dd\Pbb} \pare*{\int R_q^2 \dd\Pbb} + O(\phi(\alpha(n))r^{-2}).
\end{equation*}
The error terms add up exactly as in~\eqref{eq:sum_error_terms}.

Adding up all the terms, we are left with
\begin{align*}
  \E((S_n^0)^2) & \leq n^2 \sum_{p,q} \pare*{\int R_p^2 \dd\Pbb} \pare*{\int R_q^2 \dd\Pbb}
    + C n^2 r^{-2C'_0-2} \phi(\alpha(n))
    + C n \alpha(n) \left[\sum_p \int R_p^2 \dd\Pbb\right]
  \\& = \E(S_n^0)^2 + C n^2 r^{-2C'_0-2} \phi(\alpha(n)) + C \alpha(n) \E(S_n^0).
\end{align*}
Together with~\eqref{eq:Pbb_le_lmjhsmld}, this yields
\begin{equation}
\label{eq:<lkjmlkjwxmlcvj}
  \Pbb\otimes \mu_\omega\otimes \mu_\omega\{(\omega, x, y) \st m_n^0(\omega; x, y) > 8r\}
  \le
  \frac{C n^2 r^{-2C'_0-2} \phi(\alpha(n))}{\E(S_n^0)^2}
  + \frac{C \alpha(n)}{\E(S_n^0)}.
\end{equation}

For small enough $r$, the limit~\eqref{eq:normLip_Rp} ensures that
\begin{equation*}
  \sum_p \int R_p^2 \dd\Pbb \geq r^{\oHqu + \epsilon}.
\end{equation*}
Take $r = r_n = 1/n^{(1-2\epsilon)/(\oHqu + \epsilon)}$. For this value of
$r$, we get
\begin{equation*}
  \E(S_n^0) = n \sum_p \int R_p^2 \dd\Pbb \geq n \cdot n^{-(1-2\epsilon)} = n^{2\epsilon}.
\end{equation*}
As $\phi(\alpha(n)) = O(1/n^{\log n})$ by~\eqref{eq:phi_alpha_le}, the first
term in~\eqref{eq:<lkjmlkjwxmlcvj} decays faster than any polynomial.
Moreover, as $\alpha(n) = (\log n)^{C_4}$, the second term decays at least
like $1/n^\epsilon$. For large enough $n$, we obtain
\begin{equation*}
  \Pbb\otimes \mu_\omega\otimes \mu_\omega\{(\omega, x, y) \st m_n^0(\omega; x, y) > 8r_n\}
  \le \frac{1}{n^\epsilon}.
\end{equation*}

Choose a subsequence $n_s = \lfloor s^{2/\epsilon} \rfloor$. Along this
subsequence, the error probability is $O(1/s^2)$, which is summable. By
Borel-Cantelli, for almost every $(\omega, x, y)$, one has eventually
$m_{n_s}^0(\omega; x, y) \le 8 r_{n_s}$, and therefore
\begin{equation*}
  \liminf \frac{-\log m_{n_s}^0(\omega; x, y)}{\log n_s} \ge \frac{1-2\epsilon}{\oHqu + \epsilon}.
\end{equation*}
By monotonicity of $m_n^0$, this behavior passes to the whole sequence. As
$\epsilon$ is arbitrary, this concludes the proof of the proposition.
\end{proof}
\begin{rmk}
As in Remark~\ref{rmk:polynomial_enough1}, a mixing rate faster than any
polynomial would be enough for the proof of
Proposition~\ref{prop:Mn0_lower_bound}.
\end{rmk}

\begin{proof}[Proof of Proposition~\ref{prop:Mngg_lower_bound} on off-diagonal lower bounds.]
Fix a small enough $r$. In this proof, we will consider indices $i$ or $i'$
that will always be restricted to the range $[0, n/3)$, and indices $j$ or
$j'$ that will always be restricted to the range $[2n/3, n)$. Instead of
always specifying this, we will use the notation $\sum'$ to enforce these
restrictions implicitly. In this proof, an error term (depending on $n$ and
$r$) will be called \emph{admissible} if it is bounded by $r^{-C} u(n)$, for
some $C$ and some function $u$ that tends to zero faster than any $n^{-D}$. A
generic admissible error term will be denoted by $a_n$, keeping $r$ implicit.

Define a function
\begin{equation*}
  S_n^\gg(\omega; x, y) = \sumprime_{i, j} \sum_p \rho_p(T_\omega^i x) \rho_p(T_\omega^j y).
\end{equation*}
As in~\eqref{eq:Pbb_le_lmjhsmld}, we have
\begin{equation}
\label{eq:Pbb_gg}
  \Pbb\otimes \mu_\omega\otimes \mu_\omega\{(\omega, x, y) \st m_n^\gg(\omega; x, y) > 8r\} \leq
  \frac{\var(S_n^\gg)}{\E(S_n^\gg)^2} = \frac{\E((S_n^\gg)^2) - \E(S_n^\gg)^2}{\E(S_n^\gg)^2}.
\end{equation}

Let us first estimate $\E(S_n^\gg)$. We have
\begin{align*}
  \E(S_n^\gg) &= \sumprime_{i, j} \sum_p \int \pare*{\int \rho_p(T_\omega^i x) \dd\mu_\omega(x)}
    \pare*{\int \rho_p(T_\omega^j y) \dd\mu_\omega(y)}\dd\Pbb(\omega)
  \\& = \sumprime_{i,j} \sum_p \int R_p(\theta^i \omega) R_p(\theta^j \omega) \dd\Pbb(\omega).
\end{align*}
By~\eqref{eq:normLip_Rp}, the function $R_p$ is Lipschitz, with
$\normLip{R_p} \leq C_2/r$. The mixing of $\theta$ gives then
\begin{equation*}
  \int R_p(\theta^i \omega) R_p(\theta^j \omega) \dd\Pbb(\omega)
  = \pare*{\int R_p \dd\Pbb}^2 + O(\phi(n/3) r^{-2}),
\end{equation*}
as the gap between $i$ and $j$ is at least $n/3$. Summing over $n$, and
writing
\begin{equation}
\label{eq:defA}
  A_n = \sum_p \pare*{\int \rho_p \dd\mu}^2 = \sum_p \pare*{\int R_p \dd\Pbb}^2,
  \quad B_n = (n/3)^2 A_n,
\end{equation}
we get
\begin{equation*}
  \E(S_n^\gg) = (n/3)^2 A_n + O(n^2 \phi(n/3) r^{-2})
  = B_n + O(a_n),
\end{equation*}
where we recall that $a_n$ is a generic admissible error term. Note that
$B_n$ also depends on $r$, but we keep this implicit in the notation. Also,
$A_n$ only depends on $r$ and not on $n$, but since in the end we want to let
$r$ depend on $n$ we keep the index $n$.

\medskip

Let us now estimate $\E((S_n^\gg)^2)$, up to an admissible error term. We
expand the square, getting $6$ indices $i, i', j, j', p, q$.
\begin{equation*}
  \E((S_n^\gg)^2)
  = \sumprime_{i,i',j,j'} \sum_{p,q} \int \rho_p(T_\omega^i x) \rho_q(T_\omega^{i'} x) \rho_p(T_\omega^j y) \rho_q(T_\omega^{j'}y) \dd\mu_\omega(x) \dd\mu_\omega(y)\dd\Pbb(\omega).
\end{equation*}
We will split this sum depending on whether $i$ and $i'$ are close, i.e.,
$\abs{i'-i}\le \alpha(n)$, or whether they are far, and similarly for $j$ and
$j'$.

Assume first that $i$ and $i'$ are far. Then
\begin{equation}
\label{eq:lkjwxmlkjwcv}
  \int \rho_p(T_\omega^i x) \rho_q(T_\omega^{i'} x) \dd\mu_\omega(x)
  = R_p(\theta^i \omega) R_q(\theta^{i'} \omega) + O(\phi(\alpha(n))r^{-2}),
\end{equation}
thanks to the fiberwise mixing (Definition~\ref{def:mixing_fiber}). When
adding these error terms over all possible $i,i',j, j', p, q$, one gets an
error at most
\begin{equation*}
  C \phi(\alpha(n))r^{-2} \cdot n^4 k(r)^2
  \leq C n^4 r^{-2C'_0-2} \phi(\alpha(n)),
\end{equation*}
which is admissible as $\phi(\alpha(n))\leq C/n^{\log n}$.

In the same way, when $j$ and $j'$ are far, we can replace $\int
\rho_p(T_\omega^j y) \rho_q(T_\omega^{j'} y) \dd\mu_\omega(y)$ with
$R_p(\theta^j \omega)R_q(\theta^{j'}\omega)$, up to an admissible error.

\medskip

\emph{Case 1: when $(i,i')$ are far away and $(j,j')$ are far away.}

Up to an admissible error, the contribution of these terms to
$\E((S_n^\gg)^2)$ is bounded by
\begin{equation*}
   \sumprime_{\abs{i'-i} > \alpha(n), \abs{j'-j}>\alpha(n)} \sum_{p,q}
    \int R_p(\theta^i \omega) R_q(\theta^{i'} \omega) R_p(\theta^j \omega) R_q(\theta^{j'} \omega) \dd\Pbb(\omega).
\end{equation*}
There is a gap of $n/3$ between $\max\{i,i'\}$ and $\min\{j,j'\}$. Therefore, using
$4$-mixing, we can replace the above integral with
\begin{equation*}
  \pare*{\int R_p(\theta^i \omega) R_q(\theta^{i'} \omega) \dd\Pbb(\omega)}
  \pare*{\int R_p(\theta^j \omega) R_q(\theta^{j'} \omega) \dd\Pbb(\omega)}
\end{equation*}
up to an error of $C \phi(n/3) r^{-4}$. The sum of these errors over
$p,q,i,i',j,j'$ is admissible. Then, using $2$-mixing, one can replace $\int
R_p(\theta^i \omega) R_q(\theta^{i'} \omega) \dd\Pbb(\omega)$ with $\int R_p
\cdot \int R_q$, up to an error which is again admissible as the gap between
$i$ and $i'$ is at least $\alpha(n)$. The same goes for $\int R_p(\theta^j
\omega) R_q(\theta^{j'} \omega) \dd\Pbb(\omega)$.

Finally, up to an admissible error, the contribution of this case to
$\E((S_n^\gg)^2)$ is
\begin{equation*}
  \sumprime_{\abs{i'-i} > \alpha(n), \abs{j'-j}>\alpha(n)} \sum_{p,q}  \pare*{\int R_p \dd\Pbb}^2 \pare*{\int R_q \dd\Pbb}^2
  \leq \Bigl((n/3)^2 A_n\Bigr)^2 = B_n^2,
\end{equation*}
as $\int R_p \dd\Pbb = \int \rho_p \dd\mu$ by definition of $\mu$. Recall
that, with $A_n$ and $B_n$ defined as in~\eqref{eq:defA}, then $B_n$ is the
dominant term in the expansion of $\E(S_n^\gg)$.

\medskip

\emph{Case 2: when $(i,i')$ are far away and $(j,j')$ are close (or
conversely).}

By symmetry, we assume that $(i,i')$ are far away and $(j,j')$ are close. In
this case, after doing the substitution~\eqref{eq:lkjwxmlkjwcv}, we should
study
\begin{equation}
\label{eq:lkwjxmcvmlkjlmwxjcv}
  \int R_p(\theta^i \omega) R_q(\theta^{i'} \omega) \pare*{\int \rho_p(T_\omega^j y) \rho_q(T_\omega^{j'} y)\dd\mu_\omega(y)} \dd\Pbb(\omega).
\end{equation}
Assume for instance $j \le j'$. Then, changing variables with $y'= T_\omega^j
y$, and writing $\omega' = \theta^j \omega$, the inner integral becomes
\begin{equation*}
  F(\omega') = \int \rho_p (y') \rho_q(T_{\omega'}^{j'-j} y') \dd\mu_{\omega'}(y').
\end{equation*}
We claim that this function $F$ is Lipschitz continuous, with
\begin{equation}
\label{eq:normLip_F}
  \normLip{F} \leq C C_S^{\alpha(n)} r^{-2},
\end{equation}
where $C_S\ge 1$ is a Lipschitz constant for $S$. To prove this, let us
compute
\begin{multline}
\label{eq:F_sub_F}
  F(\omega_1) - F(\omega_2) = \left[\int \rho_p \cdot (\rho_q \circ T_{\omega_1}^{j'-j} - \rho_q \circ T_{\omega_2}^{j'-j}) \dd\mu_{\omega_1}\right]
  \\ + \left[ \int \rho_p \cdot \rho_q \circ T_{\omega_2}^{j'-j} \dd\mu_{\omega_1} - \int \rho_p \cdot \rho_q \circ T_{\omega_2}^{j'-j} \dd\mu_{\omega_2}\right].
\end{multline}
For any $y$, we have
\begin{equation*}
  \rho_q \circ T_{\omega_1}^{j'-j}(y) - \rho_q \circ T_{\omega_2}^{j'-j}(y)
  = \rho_q\circ \pi_2 (S^{j'-j} (\omega_1, y)) - \rho_q \circ \pi_2 (S^{j'-j} (\omega_2, y)),
\end{equation*}
where $\pi_2 : \Omega \times X \to X$ is the second projection. As $S$ is
Lipschitz with Lipschitz constant $C_S$, we have $d(S^{j'-j}(\omega_1, y),
S^{j'-j}(\omega_2, y)) \leq C_S^{j'-j} d(\omega_1, \omega_2)$. Therefore, the
term on the first line of~\eqref{eq:F_sub_F} is bounded by $C_S^{\alpha(n)}
r^{-1} d(\omega_1, \omega_2)$, as $\rho_q$ has Lipschitz constant at most
$r^{-1}$ and $j'-j \le \alpha(n)$. For the term on the second line, we note
that $\rho_p \cdot \rho_q \circ T_{\omega_2}^{j'-j}$ is Lipschitz, with
Lipschitz constant at most $r^{-2} C_S^{\alpha(n)}$, by the same argument.
Since $\omega' \mapsto \mu_{\omega'}$ is Lipschitz
(Definition~\ref{def:lipschitz_fibers}), it follows that this term is bounded
by $C r^{-2} C_S^{\alpha(n)} d(\omega_1, \omega_2)$. This completes the proof
of~\eqref{eq:normLip_F}.

We can write~\eqref{eq:lkwjxmcvmlkjlmwxjcv} as $\int R_p(\theta^i \omega)
R_q(\theta^{i'} \omega) F(\theta^{j} \omega) \dd\Pbb(\omega)$. Using
$3$-mixing (which follows from $4$-mixing by taking the last function equal
to $1$), this is equal to
\begin{equation*}
  \pare*{\int R_p(\theta^i \omega) R_q(\theta^{i'} \omega) \dd\Pbb(\omega)} \pare*{\int F \dd\Pbb}
  + O(\normLip{R_p} \normLip{R_q} \normLip{F} \phi(n/3)).
\end{equation*}
The error terms add up to at most $n^4 r^{-4-2C'_0} C_S^{\alpha(n)}
\phi(n/3)$, thanks to~\eqref{eq:normLip_F}. As $\phi(n/3) \leq C
e^{-(n/3)^c}$ for some $c>0$ while $\alpha(n) = (\log n)^{C_4}$, this error
term is again admissible.

We can also replace $\int R_p(\theta^i \omega) R_q(\theta^{i'} \omega)
\dd\Pbb(\omega)$ with $\pare*{\int R_p} \pare*{\int R_q}$ up to an admissible
error term, thanks to the mixing of $\theta$ and since $i$ and $i'$ are far
apart. Finally, up to an admissible error term, the contribution of these
terms to $\E((S_n^\gg)^2)$ is at most
\begin{align*}
  &\sum_{p,q} \pare*{\int R_p \dd\Pbb} \cdot \pare*{\int R_q \dd\Pbb}
  \cdot \pare*{\int \rho_p (y) \rho_q(T_{\omega}^{j'-j} y) \dd\mu_\omega(y) \dd\Pbb(\omega)}
  \\&= \int \pare*{\sum_p \left[\int R_p \dd\Pbb\right] \rho_p(y)} \pare*{\sum_q \left[\int R_q \dd\Pbb\right] \rho_q(T_{\omega}^{j'-j} y)} \dd\mu_\omega(y) \dd\Pbb(\omega)
  \\& \leq \pare*{ \int \pare*{\sum_p \left[\int R_p \dd\Pbb\right] \rho_p(y)}^2 \dd\mu_\omega(y) \dd\Pbb(\omega)}^{1/2}
  \\& \quad \quad \quad\quad \times \pare*{ \int \pare*{\sum_q \left[\int R_q \dd\Pbb\right] \rho_q(T_{\omega}^{j'-j} y)}^2 \dd\mu_\omega(y) \dd\Pbb(\omega)}^{1/2},
\end{align*}
by Cauchy-Schwartz. By invariance of the measure $\Pbb\otimes \mu_\omega$
under $S$, the two factors in the last product coincide, eliminating the
square roots.

For any $y$, there are at most $C_0$ nonzero terms in the sum $\sum_p
\left[\int R_p \dd\Pbb\right] \rho_p(y)$, by bounded local complexity. We can
therefore use the convexity inequality $(a_1+\dotsc+a_{C_0})^2 \le C_0 (a_1^2
+ \dotsc+a_{C_0}^2)$ to bound the square of the sum by the sum of the
squares. We get a bound
\begin{equation*}
  C_0 \int \sum_p \left[\int R_p \dd\Pbb\right]^2 \rho_p(y)^2 \dd\mu_\omega(y) \dd\Pbb(\omega).
\end{equation*}
Bounding $\rho_p^2$ by $\rho_p$, and since $\int \rho_p(y) \dd\mu_\omega(y)
\dd\Pbb(\omega) = \int R_p \dd\Pbb$, we are left with a bound
\begin{equation*}
  C_0 \sum_p \left[\int R_p \dd\Pbb\right]^3.
\end{equation*}
By concavity of the function $x\mapsto x^{2/3}$, one has the inequality
$\pare*{\sum a_i}^{2/3} \le \sum a_i^{2/3}$, and therefore $\sum a_i \le
\pare*{\sum a_i^{2/3}}^{3/2}$. Applying this inequality to the previous
equation, one obtains a bound
\begin{equation*}
  C_0 \pare*{\sum_p \left[\int R_p \dd\Pbb\right]^2}^{3/2}
  = C_0 A_n^{3/2}.
\end{equation*}
Summing over the possible values of $i, i', j$ (there are $(n/3)^3$ of them)
and then $j'$ (there are at most $2\alpha(n)$ of them, as $\abs{j'-j} \leq
\alpha(n)$), we get that the total contribution of this case is bounded by
\begin{equation*}
  C \alpha(n)n^3 A_n^{3/2} = C \alpha(n) B_n^{3/2},
\end{equation*}
up to an admissible error term.

\medskip

\emph{Case 3: when both $(i,i')$ and $(j,j')$ are close.}

In this case, we can use rough estimates. We have
\begin{multline*}
  \sum_{p,q} \int \rho_p(T_\omega^i x) \rho_q(T_\omega^{i'} x) \rho_p(T_\omega^j y) \rho_q(T_\omega^{j'}y) \dd\mu_\omega(x) \dd\mu_\omega(y)\dd\Pbb(\omega)
  \\ \le C_0\sum_p  \int \rho_p(T_\omega^i x) \rho_p(T_\omega^j y) \dd\mu_\omega(x) \dd\mu_\omega(y)\dd\Pbb(\omega),
\end{multline*}
as $\sum_q \rho_q (z) \rho_q(z')\le C_0$ for any $z, z'$,
by~\eqref{eq:compare}. This is equal to $C_0\int R_p (\theta^i \omega)
R_p(\theta^j \omega) \dd\Pbb(\omega)$, which coincides with $C_0\pare*{\int
R_p \dd\Pbb}^2$ by mixing, up to an error term which is admissible, as above.
Finally, the total contribution of this case is, up to an admissible error
term, bounded by
\begin{equation*}
  C n^2 \alpha(n)^2 \sum_p \pare*{\int R_p \dd\Pbb}^2
  = C \alpha(n)^2 B_n.
\end{equation*}

\medskip

\emph{Conclusion.}

Combining all three cases, we get
\begin{equation*}
  \E((S_n^\gg)^2) \leq B_n^2 + C \alpha(n) B_n^{3/2} + C \alpha(n)^2 B_n + a_n,
\end{equation*}
where $a_n$ is an admissible error term.

Let $\epsilon > 0$. For small enough $r$, the
convergence~\eqref{eq:limsup_Rp} ensures that, for large $n$,
\begin{equation*}
  A_n = \sum_p \pare*{\int R_p \dd\Pbb}^2 \geq r^{\oHan + \epsilon}.
\end{equation*}
Take $r = r_n = 1/n^{(2-3\epsilon)/(\oHan + \epsilon)}$. Then $n^2 A_n \geq
n^{3\epsilon}$. Note that $\E(S_n^\gg) = B_n + a_n$ where $a_n$ is an
admissible error term. We get $\E(S_n^\gg)^2 = B_n^2 + a_n^2 + 2B_n a_n$,
where both $a_n^2$ and $2B_n a_n$ are again admissible error terms, and may
thus be written as $a'_n$. In particular, $\E(S_n^\gg)^2$ is asymptotic to
$B_n^2$, and is larger than $B_n^2/2$ for large $n$. With~\eqref{eq:Pbb_gg},
we get
\begin{align*}
  \Pbb\otimes \mu_\omega\otimes &\mu_\omega\{(\omega, x, y) \st m_n^\gg(\omega; x, y) > 8r\} \leq
  \frac{\E((S_n^\gg)^2) - \E(S_n^\gg)^2}{\E(S_n^\gg)^2}
\\&
  \leq \frac{B_n^2 + C \alpha(n) B_n^{3/2} + C \alpha(n)^2 B_n + a_n - (B_n + a_n)^2}{B_n^2/2}
\\&
  = \frac{B_n^2 + C \alpha(n) B_n^{3/2} + C \alpha(n)^2 B_n + a_n - B_n^2 + a'_n}{B_n^2/2}
\\&
  \leq C \frac{\alpha(n)}{B_n^{1/2}} + C \frac{\alpha(n)^2}{B_n} + C \frac{a_n + a'_n}{B_n^2}.
\end{align*}
As $\alpha(n)$ grows more slowly than any polynomial while $B_n \geq
n^{3\epsilon}$, it follows that the above probability is $O(n^{-\epsilon})$.

\medskip

Choose a subsequence $n_s = \lfloor s^{2/\epsilon} \rfloor$. Along this
subsequence, the error probability is $O(1/s^2)$, which is summable. By
Borel-Cantelli, for almost every $(\omega, x, y)$, one has eventually
$m_{n_s}^\gg(\omega; x, y) \le 8 r_{n_s}$, and therefore
\begin{equation*}
  \liminf \frac{-\log m_{n_s}^\gg(\omega; x, y)}{\log n_s} \ge \frac{2-3\epsilon}{\oHan + \epsilon}.
\end{equation*}
One can not conclude directly from this, as the sequence $m_n^\gg$ is not
monotone. What is true, though, is that for all $n\in [n_s, n_{s+1}]$, one
has
\begin{equation*}
  -\log m_n^\gg(\omega; x, y) \ge -\log \min_{i < n_s/3,\ 2n_{s+1}/3 \le j < n_s} d(T_\omega^i x, T_\omega^j y) \eqqcolon -\log m'_{n_s}.
\end{equation*}
One can show exactly as above that, almost surely,
\begin{equation*}
  \liminf \frac{-\log m'_{n_s}(\omega; x, y)}{\log n_s} \ge \frac{2-3\epsilon}{\oHan + \epsilon}.
\end{equation*}
With the previous equation, this inequality passes to the whole sequence
$m_n^\gg$. We obtain almost surely
\begin{equation*}
  \liminf \frac{-\log m_n^\gg(\omega; x, y)}{\log n} \ge \frac{2 - 3\epsilon}{\oHan + \epsilon}.
\end{equation*}
As $\epsilon$ is arbitrary, this proves the result.
\end{proof}


\begin{thebibliography}{amsalpha}

\bibitem{Atnip-Froyland-Vaienti--Thermodynamic-Formalism-For-Random--CMP2021}
J. Atnip, G. Froyland, C. Gonz\'alez-Tokman and S. Vaienti.
\newblock {\em Thermodynamic formalism for random weighted covering systems.}
\newblock Comm. Math. Phys.\textbf{386} (2021), no. 2, 819--902.


\bibitem{Atnip-Froyland-Vaienti--Equilibrium-States-For-Non-transitive--2022}
J. Atnip, G. Froyland, C. Gonz\'alez-Tokman and S. Vaienti.
\newblock {\em Equilibrium states for non-transitive random open and closed dynamical systems.}
\newblock {\em Arxiv} 2107.03776 (2021).

\bibitem{BaLiRo} V. Barros, L. Liao and J. Rousseau,
\emph{On the shortest distance between orbits and the longest common substring problem,} Adv. Math., \textbf{334} (2019), 311-339.

\bibitem{Bowen--Equilibrium-States-And-The--1975}
R. Bowen.
\newblock {\em Equilibrium states and the ergodic theory of Anosov
  diffeomorphisms}.
\newblock 1975.
\newblock Lecture Notes in Mathematics, Vol. 470.


\bibitem{colaro} A. Coutinho, R. Lambert and J. Rousseau, \emph{Matching strings in encoded sequences},  Bernoulli \textbf{26} (2020), no. 3, 2021--2050.

\bibitem{Denker-Gordin--Gibbs-Measures-For-Fibred--AM1999}
M. Denker and M. Gordin.
\newblock {\em Gibbs measures for fibred systems.}
\newblock Adv. Math. \textbf{148} (1999), no. 2,161--192.

\bibitem{lcs-random}J. Rousseau, \emph{Longest common substring for random subshifts of finite type}, Ann. Inst. Henri Poincar\'e Probab. Stat \textbf{57} (2021), no. 3, 1768--1785

\bibitem{Ruelle--The-Thermodynamic-Formalism-For--CMP1989}
D. Ruelle.
\newblock {\em The thermodynamic formalism for expanding maps.}
\newblock Comm. Math. Phys. \textbf{125} (1989), no. 2, 239--262.

\bibitem{Stadlbauer-Suzuki-Varandas--Thermodynamic-Formalism-For-Random--CMP2021}
M. Stadlbauer, S. Suzuki  and P. Varandas.
\newblock {\em Thermodynamic formalism for random non-uniformly expanding maps.}
\newblock Comm. Math. Phys.  \textbf{385} (2021), no. 1, 369--427.

\bibitem{Stadlbauer-Varandas-Zhang--Quenched-And-Annealed-Equilibrium--2020}
M. Stadlbauer, P. Varandas  and Xu. Zhang.
\newblock {\em Quenched and annealed equilibrium states for random Ruelle expanding
  maps and applications.}
\newblock Ergod. Theory Dyn. Syst. (2022),
\href{https://doi.org/10.1017/etds.2022.60}{doi 10.1017/etds.2022.60}.

\end{thebibliography}
\end{document}